 \newcommand{\h}[1]{\mathbf{#1}}
\date{}
\newtheorem{definition}{Definition}[section]
\newtheorem{theorem}{Theorem}[section]
\renewcommand{\thefootnote}{\fnsymbol{footnote}}
\begin{document}

\title{\Large Low-rankness and Smoothness Meet Subspace: A Unified Tensor Regularization for Hyperspectral Image Super-resolution}
\begin{upshape}
\author{Jun Zhang$^{1,2}$, Chao Yi$^{1}$, Mingxi Ma$^{1}$, Mengling He$^{1}$, Chao Wang$^{3,\ast}$}
\end{upshape}\maketitle

\begin{center}
\indent\indent \textsl{\ \scriptsize \ \ \ \ \ \
$^1$College of Science, Jiangxi University of Water Resources and Electric Power, Nanchang 330099, Jiangxi, China\\
$^2$Key Laboratory of Engineering Mathematics and Advanced Computing, Jiangxi University of Water Resources and Electric Power, Nanchang 330099, Jiangxi, China\\
$^3$Department of Statistics and Data Science, Southern University of Science and Technology, Shenzhen 518055, Guangdong, China\\
\indent\indent}
\end{center}
\renewcommand{\thefootnote}{\fnsymbol{footnote}}
\footnotetext{
\ $^{\ast}$\scriptsize Corresponding author.
\\
\textsl{Email addresses: }junzhang0805@126.com (J. Zhang), chaoyi1002@126.com (C. Yi), mingxima@126.com (M. Ma), heml0816@163.com (M. He), wangc6@sustech.edu.cn (C. Wang).
 \indent\indent\small }

\textbf{Abstract.}\
Hyperspectral image super-resolution (HSI-SR) has emerged as a challenging yet critical problem in remote sensing. Existing approaches primarily focus on regularization techniques that leverage low-rankness and local smoothness priors. Recently, correlated total variation has been introduced for tensor recovery, integrating these priors into a single regularization framework. Direct application to HSI-SR, however, is hindered by the high spectral dimensionality of hyperspectral data.
In this paper, we propose a unified tensor regularizer, called JLRST, which jointly encodes low-rankness and local smoothness priors under a subspace framework. Specifically, we compute the gradients of the clustered coefficient tensors along all three tensor modes to fully exploit spectral correlations and nonlocal similarities in HSI. By enforcing priors on subspace coefficients rather than the entire high resolution HSI data, the proposed method achieves improved computational efficiency and accuracy. Furthermore, to mitigate the bias introduced by the tensor nuclear norm (TNN), we introduce the mode-3 logarithmic TNN to process gradient tensors. An alternating direction method of multipliers with proven convergence is developed to solve the proposed model. Experimental results demonstrate that our approach significantly outperforms state-of-the-art model-based methods in HSI-SR.

\begin{flushleft}
\textbf{Keywords:}\
Hyperspectral image super-resolution, subspace,  low-rankness, smoothness, logarithmic tensor nuclear norm.
\end{flushleft}


\section{Introduction}

Hyperspectral imaging technology has become a research hotspot in remote sensing due to its ability to capture continuous spectral information of ground targets in the visible to infrared wavelength range.
Hyperspectral images (HSIs) typically contain dozens to hundreds of spectral bands, and their rich spectral information provides significant advantages in applications such as face recognition \cite{HFRW}, target detection \cite{MLBH}, and land cover classification \cite{HICV}.
Despite the rapid development of hyperspectral imaging technology, the acquired HSIs typically have lower spatial resolution due to the energy limitations of hyperspectral sensors.
In contrast, multispectral sensors capture images with higher spatial but lower spectral resolution \cite{AACS}. Thus, fusing multispectral image (MSI) and HSI from the same scene has become a widely used approach to obtain a high-resolution hyperspectral image (HR-HSI) \cite{HISRM,HAMIFW}, a process known as hyperspectral image super-resolution (HSI-SR).

\subsection{Related Works}
In the past decade, various HSI-SR approaches have emerged, generally classified into three categories: matrix factorization-based methods \cite{ACFF,TMR}, tensor factorization-based methods \cite{HSAC,FSTRD}, and deep learning-based methods \cite{NARH,PPSA,deep4,deep5,deep1,deep2,deep3,CLoRF}.
Deep learning-based methods can adaptively extract features in a data-driven manner and effectively model the complex nonlinear relationship between spectral and spatial information, thereby achieving superior fusion performance. However, these methods typically rely on large-scale training data and are prone to decreased generalization capability when data is insufficient. Additionally, their decision-making process lacks interpretability.
This research focuses on model-driven HSI-SR methods, which explicitly rely on physical observation models and interpretable priors.

In matrix factorization-based approaches for HSI-SR, every spectral vector is representable as a linear combination of multiple spectral characteristics.
Consequently, the estimation of target HR-HSI is transformed into determining both spectral bases and their corresponding coefficients. However, accurate estimation of these components necessitates the incorporation of additional prior knowledge.

Initially, sparsity was extensively applied in the field of HSI-SR.
Kawakami et al. \cite{HHIV} pioneered the introduction of matrix factorization into the fusion problem, employing the sparse prior to learn spectral bases from LR-HSI and subsequently estimating the corresponding coefficients from HR-RGB.
Building on this, Huang et al. \cite{SASI} utilized the K-SVD algorithm to learn spectral bases from LR-HSI. However, since this method independently sparsely encodes the spatial pixels of the target HSI without accounting for spatial correlation, the fusion performance is not outstanding.
Therefore, Dong et al. \cite{HISR} proposed a non-negative structural sparse representation method that fully utilizes both spatial and spectral correlations.
In addition, Han et al. \cite{SSCS} presented an HSI-SR method based on self-similarity constrained sparse representation, which mitigates the impact of outliers in sparse coding learning.
In \cite{SSSL}, Xue et al. proposed an HSI-SR approach based on structured sparse low-rank representation that effectively exploits spatial and spectral low-rank priors. Moreover, Chen et al. \cite{FOHM} introduced a novel HSI-MSI fusion approach that integrates a spatial-spectral dual dictionary with a structured sparse low-rank representation, effectively utilizing the spatial and spectral information of HSI.

In recent years, total variation (TV) regularization has been widely studied in the field of image processing owing to its remarkable ability to preserve edge information \cite{ACFF,HISRVN,HIFW,TVRM,PTCW}. For the HSI-SR problem,
Simões et al. \cite{ACFF} proposed a subspace representation-based approach, where TV regularization was used to enhance spatial smoothness.
In \cite{HISRVN}, an HSI-SR method based on nonlocal low-rank tensor approximation and TV regularization was presented, which considered the nonlocal self-similarity and the local smooth structure.
Subsequently, in our previous work \cite{HIFW}, a hybrid regularization method for HSI-MSI fusion was proposed, where TV regularization was combined with sparse prior and superpixel-based nuclear norm.
However, the conventional TV model employs uniform weighting for the two sub-variables in the gradient operator, resulting in a poor coupling effect with local features of the image \cite{IDBOT,PIRU}.
To address this issue, the adaptive TV (ATV) regularization was extensively investigated in HSI-SR \cite{FSTRD,LTNN,HSFU}.
Particularly, a new HSI-MSI fusion method was developed that incorporates the ATV regularization and the superpixel-based weighted nuclear norm (WNN) \cite{HSFU}. Through the incorporation of ATV, this approach achieves superior handling of image local features. Besides,
ATV regularization has also been successfully applied to tensor completion \cite{ATVAS}, and HSI denoising \cite{HIDWW}.
These approaches not only exploit the piecewise smoothness of images but also incorporate their intrinsic low-rank property.

On the other hand, given the inherent low-rank property of HSIs, numerous HSI-SR approaches that apply low-rank constraints have been proposed. Among these, nuclear norm minimization (NNM) has emerged as a predominant approach, since it provides the tightest convex relaxation for low-rank matrix approximation \cite{RSSB,LADM}.
However, since NNM treats all singular values equally during the soft-thresholding process, it overlooks the fact that larger singular values contain more important information than smaller ones. To this end, Zhang et al. \cite{WNNM} presented the WNN for HSI-SR, which assigns distinct weights to singular values, ensuring that larger singular values undergo less shrinkage and thus enhancing flexibility.
In contrast to the matrix nuclear norm, the tensor nuclear norm (TNN) demonstrates superior capability in preserving the intrinsic low-rank structure of tensor data \cite{HMIF,HISRVA,CTBT,GTNN}. In \cite{HISRVA}, an HSI-SR method based on factor group sparsity regularized subspace representation was developed. Specifically, by introducing TNN regularization to constrain the low-dimensional coefficient subspace, this approach effectively captures spatial self-similarity in images.
Similarly,
TNN uniformly weights all singular values, disregarding their different physical interpretations and consequently producing a biased low-rank tensor approximation.
In nuclear norm approximation, the sum of logarithmic singular values exhibits superior performance over the sum of singular values. Therefore, Dian et al. \cite{TMR} presented an HSI-SR method using subspace-based logarithmic tensor nuclear norm (LTNN) regularization. Additionally, under the tensor ring (TR) decomposition framework, an HSI-MSI fusion method using the LTNN regularization was proposed \cite{LTNN}. This approach leverages the mode-2 LTNN on three TR factors to more effectively maintain latent low-rank structures.

To enhance the spatial smoothness of images, TNN is often integrated with TV regularization in many applications
\cite{LTNN,TNNB,RLRT}.
In \cite{TNNB}, a model for color and multispectral image denoising was proposed by combining TNN with TV regularization. This method employs the TNN to characterize the low-rank structure of underlying data, while utilizing two distinct TV regularizations to maintain image local details. In \cite{RLRT},  a robust tensor completion approach via transformed TNN and TV regularization was presented.
However, these approaches encode low-rank and local smooth priors as the sum of two independent regularization terms, and their performance is critically dependent on the balancing parameter.
To address this issue, for tensor recovery, Wang et al. \cite{CTV} presented an innovative regularizer termed as tensor correlated total variation (t-CTV).
Specifically, this regularizer requires gradient computation along each tensor mode, followed by TNN evaluation of the resulting gradient tensors along corresponding dimensions. Additionally,
Wang et al. \cite{NMSCTV} proposed a non-convex mode-shuffled tensor correlated total variation (NMS-t-CTV) for HSI-SR. This method modifies t-CTV via a mode-shuffle strategy, converting the convex approximation into a non-convex one, thereby achieving compact and tight representations of the multilevel and multi-dimensional structural correlations inherent in HSI.

\subsection{Research Motivations}
To fully exploit spectral correlations in HSI and enhance computational
efficiency \cite{sub1,sub2}, we focus on subspace-based HSI-SR methods.
In addition,
although TR decomposition has a strong ability to explore intrinsic data structures, it often leads to substantial computational costs. This challenge can be mitigated by projecting the original HR-HSI into a low-dimensional subspace.

Inspired by the proven advantages of prior characterization through a unified regularizer,
we propose a subspace-based HSI-SR approach utilizing joint low-rank and smooth tensor (JLRST) regularization. Although t-CTV imposes the TNN constraint on the background tensor in the gradient domain,
the convex TNN treats each singular value equally and ignores the importance of larger singular values, which may result in a biased low-rank approximation.
To alleviate this issue, our proposed regularizer uses mode-3 LTNN in the gradient domain.
Specifically, we decompose the target HR-HSI into the spectral basis and its corresponding coefficients. The spectral basis is first obtained by performing singular value decomposition on LR-HSI.
Since HR-HSI often exhibits self-similarities, it naturally contains many similar patches. To this end, we group the patches in coefficients in light of the clustering structure learned from the HR-MSI.
Subsequently, patches from the same group are collected into a 3D tensor. Furthermore, we impose the JLRST regularization on these tensors, where the LTNN along the third mode is applied to the related gradient tensors. Finally, the constructed model is solved by the alternating direction method of multipliers (ADMM).

\subsection{Principal Contributions}
The principal contributions of this work are outlined as follows:
\begin{itemize}
    \item A novel tensor regularizer named JLRST is proposed to estimate the coefficient tensor instead of directly processing HR-HSI. To our knowledge, this is the first work in subspace-based HSI-SR to encode both global low-rank and local smooth priors of a tensor simultaneously.
    \item To produce a more precise low-rank approximation, we employ the mode-3 LTNN to constrain the gradient tensors rather than the TNN.
    \item
    We establish the corresponding convergence theory for the proposed algorithm.
    Experimental evaluations across four datasets illustrate the efficacy of the proposed approach, with additional convergence analysis validating the algorithm's stability.
\end{itemize}

The remainder of this paper is structured as follows. In Section \ref{section2}, we introduce symbols and provide some definitions related to tensor operations. In Section \ref{section3}, we propose a new subspace-based HSI-SR model via joint low-rank and smooth tensor regularization and present its optimization algorithm with fine convergence. The experimental results, parameter analysis, and convergence analysis are given in Section \ref{section4}. Finally, we conclude this article in Section \ref{section5}.

\section{Preliminaries}
\label{section2}
This section is devoted to introducing some notations and definitions related to tensors.

In this paper, scalars are signified by lowercase and uppercase letters, i.e., $m,\ M\in\mathbb{R}$. Vectors are represented by bold lowercase letters, i.e., $\mathbf{b}\in\mathbb{R}^{N}$. We denote matrices by bold uppercase letters, i.e., $\mathbf{X}\in\mathbb{R}^{M\times{N}}$. A three-order tensor is expressed with calligraphic letter, i.e., $\mathcal{X}\in\mathbb{R}^{I_1\times{I_2}\times{I_3}}$, whose mode-$n$ unfolding matrices $\mathbf{X}_{(1)}\in\mathbb{R}^{I_1\times{I_2I_3}}$, $\mathbf{X}_{(2)}\in\mathbb{R}^{I_2\times{I_1I_3}}$ and $\mathbf{X}_{(3)}\in\mathbb{R}^{I_3\times{I_1I_2}}$ are constructed by respectively arranging the first, second and third mode vectors of tensor $\mathcal{X}$ in columns. $\mathcal{X}(i,:,:)$, $\mathcal{X}(:,i,:)$ and $\mathcal{X}(:,:,i)$ represent the $i$-th horizontal, lateral, and frontal slices of $\mathcal{X}$, respectively. By performing the FFT along the third mode of $\mathcal{X}$ yields $\overline{\mathcal{X}}\in\mathbb{C}^{I_1\times I_2\times I_3}$. In MATLAB, the FFT is executed using the command $\mathbf{fft}$, so we can obtain
$\overline{\mathcal{X}} = \mathbf{fft}(\mathcal{X},[\ ], 3 )$. Additionally, $\mathcal{X}$ can be recovered from $\overline{\mathcal{X}}$ by applying the inverse FFT along the third mode, i.e., $\mathcal{X} = \mathbf{ifft}(\overline{\mathcal{X}}, [\ ], 3)$.

\begin{definition}(Tensor mode-n product with a matrix \cite{TMR}): The mode‑$n$ product between a tensor $\mathcal{A}\in\mathbb{R}^{I_1\times{I_2}\times{\cdots}\times{I_N}}$ and a matrix $\mathbf{B}\in\mathbb{R}^{K\times I_n}$  is defined as follows:
\begin{equation}
\mathcal{X}=\mathcal{A}\times_n\mathbf{B}\in\mathbb{R}^{I_1\times {\cdots}\times{I_{n-1}}\times K\times I_{n+1} \times{\cdots}\times{I_N}}.
\label{formula T-Mproduct}
\end{equation}
\end{definition}

By applying tensor matricization, equation $(\ref{formula T-Mproduct})$
can be reformulated in an unfolded form as $\mathbf{X_{(n)}}= \mathbf{B}\mathbf{A}_{(n)}$.

\begin{definition}(t-SVD \cite{TMR}): For a given tensor  $\mathcal{X}\in\mathbb{R}^{I_1\times{I_2}\times{I_3}}$, the t-SVD of $\mathcal{X}$ is
\begin{equation}
\mathcal{X}=\mathcal{U}\ast\mathcal{S}\ast\mathcal{V}^{T}
\label{formula t-svd}
\end{equation}
where $\mathcal{U}\in\mathbb{R}^{I_1\times{I_1}\times{I_3}}$ and $\mathcal{V}\in\mathbb{R}^{I_2\times{I_2}\times{I_3}}$ are orthogonal tensors, and $\mathcal{S}\in\mathbb{R}^{I_1\times{I_2}\times{I_3}}$ is an f-diagonal tensor.
\end{definition}

\begin{definition}(Gradient tensor \cite{CTV}): For $\mathcal{X}\in\mathbb{R}^{I_1\times{I_2}\times{\cdots}\times{I_N}}$, its gradient tensor along the $n$-th mode is expressed as
\begin{equation}
\nabla_{n}\mathcal{(X)}= \mathcal{X}\times_n\mathbf{D}_{I_n},\ n=1,2,\cdots,N
\label{formula gradient tensor}
\end{equation}
where $\mathbf{D}_{I_n}$ is a row circulant matrix of $(-1,1,0,\cdots,0)$.
\end{definition}

\section{Proposed Method}
\label{section3}

\subsection{Observation Model}
The target HR-HSI is represented by $\mathcal{Z}\in\mathbb{R}^{W\times{H}\times{S}}$, where $W$ and $H$ correspond to the spatial dimensions and $S$ stands for the spectral dimension.

The observed LR-HSI is indicated by $\mathcal{X}\in\mathbb{R}^{w\times{h}\times{S}}$,
which has $w \times h$ pixels and $S$ bands. $\mathcal{X}$ can be considered as the spatially downsampled version of $\mathcal{Z}$. Therefore, $\mathcal{X}$ can be represented as follows:
\begin{equation}
\begin{aligned}
\mathbf{X}_{(3)}=\mathbf{Z}_{(3)}\mathbf{B}\mathbf{S}
\label{formula7}
\end{aligned}
\end{equation}
where $\mathbf{B}\in\mathbb{R}^{WH\times WH}$ denotes the point spread functional, which is assumed to be consistent across all spectral bands. The matrix $\mathbf{S}\in\mathbb{R}^{WH\times {wh}}$
is used for spatial downsampling.

$\mathcal{Y}\in\mathbb{R}^{W\times{H}\times{s}}$ stands for the observed HR-MSI from the same scene.
$\mathcal{Y}$ can be regarded as the spectrally downsampled version of $\mathcal{Z}$, i.e.,
\begin{equation}
\begin{aligned}
\mathbf{Y}_{(3)}=\mathbf{F}\mathbf{Z}_{(3)}
\label{formula8}
\end{aligned}
\end{equation}
where $\mathbf{F}\in\mathbb{R}^{s\times S}$ is a matrix representing the spectral response function of the multispectral sensor.

\subsection{Subspace Learning}
HSI typically has a strong correlation in bands, and spectral vectors reside in a low-dimensional subspace  \cite{FHID}. Thus, the HR-HSI could be factorized as
\begin{equation}
\begin{aligned}
\mathcal{Z}=\mathcal{C}\times_3\mathbf{R}
\label{formula9}
\end{aligned}
\end{equation}
where $\mathcal{C}\in\mathbb{R}^{W\times H\times L }$ is a coefficient tensor. The subspace $\mathbf{R}\in\mathbb{R}^{S\times L}$ ($L< S$) represents a spectral basis. The condition $L<S$ signifies that the spectral vector exists in a low-dimensional subspace,
thereby enhancing the computational efficiency through dimensionality reduction. Due to the orthogonality of the columns in the subspace, each spectral vector $\mathcal{Z}(i, j, :)$
possesses the Frobenius norm that is identical to  $\mathcal{C}(i, j, :)$. In this manner, the self-similarities present in the image domain are mapped onto the domain of subspace coefficients. By applying tensor matricization, we obtain the following unfolded form:
\begin{equation}
\begin{aligned}
\mathbf{Z}_{(3)}=\mathbf{R}\mathbf{C}_{(3)}.
\label{formula10}
\end{aligned}
\end{equation}
Therefore, equations  $(\ref{formula7})$ and  $(\ref{formula8})$ can be rephrased as
\begin{equation}
\begin{aligned}
\mathbf{X}_{(3)}=\mathbf{R}\mathbf{C}_{(3)}\mathbf{B}\mathbf{S}
\label{formula11}
\end{aligned}
\end{equation}
\begin{equation}
\begin{aligned}
\mathbf{Y}_{(3)}=\mathbf{F}\mathbf{R}\mathbf{C}_{(3)}.
\label{formula12}
\end{aligned}
\end{equation}

According to (\ref{formula11}) and (\ref{formula12}), the fusion task is converted into estimating the subspace $\mathbf{R}$ and the coefficients $\mathbf{C}_{(3)}$.
Since the LR-HSI retains the majority of the spectral characteristics of the HR-HSI, the low-dimensional spectral subspace can be estimated from the LR-HSI using singular value decomposition (SVD), namely
\begin{equation}
\begin{aligned}
\mathbf{X}_{(3)}=\mathbf{U}\mathbf{\Sigma}\mathbf{V}^{T}.
\label{formula13}
\end{aligned}
\end{equation}
By preserving the $L$ largest singular values, the subspace $\mathbf{R}$ can be derived, as shown in the following equation:
\begin{equation}
\begin{aligned}
\mathbf{R}=\mathbf{U}(:, 1: L).
\label{formula14}
\end{aligned}
\end{equation}

\subsection{Coefficients Estimation}

According to the subspace representation given in (\ref{formula9}), the fusion task seeks to estimate both the spectral basis $\mathbf{R}$ and the coefficients $\mathcal{C}$ from the observations $\mathcal{X}$ and $\mathcal{Y}$. Under the condition that the spectral basis is known, $\mathbf{C}_{(3)}$ could be determined by combining (\ref{formula11}) and (\ref{formula12}), which yields
\begin{equation}
\begin{aligned}
\min\limits_{\mathbf{C}_{(3)}}
&\|\mathbf{X}_{(3)} - \mathbf{R}\mathbf{C}_{(3)}{\mathbf{B}}\mathbf{S}\|_F^2+\|\mathbf{Y}_{(3)} - \mathbf{F}\mathbf{R}\mathbf{C}_{(3)}\|_F^2
\label{formula15}
\end{aligned}
\end{equation}
where $\|\cdot\|_{F}$ denotes the Frobenius norm of a matrix.
However, (\ref{formula15}) is an ill-posed issue, and the solution of $\mathbf{C}_{(3)}$ is not unique.
To address this, related prior information must be incorporated to regularize $\mathbf{C}_{(3)}$, resulting in the following optimization problem:
\begin{equation}
\begin{aligned}
\min\limits_{\mathbf{C}_{(3)}}
&\|\mathbf{X}_{(3)} - \mathbf{R}\mathbf{C}_{(3)}{\mathbf{B}}\mathbf{S}\|_F^2+\|\mathbf{Y}_{(3)} - \mathbf{F}\mathbf{R}\mathbf{C}_{(3)}\|_F^2+\psi(\mathcal{C})
\label{formula16}
\end{aligned}
\end{equation}
where $\psi (\cdot) $ denotes the regularization function. The first two terms constitute the fidelity terms,
while $\psi(\mathcal{C})$ serves as the regularization term for the coefficients $\mathcal{C}$.

HR-HSIs are often locally low-rank. Globally, they exhibit numerous similar spatial-spectral structures. In particular, they possess nonlocal similarity in the spatial domain.

Since HR-MSI retains most spatial information of HR-HSI, the nonlocal similarity can be effectively learned from HR-MSI.
Specifically, the HR-MSI is divided into multiple patches, each possessing a spatial dimension of $\sqrt{q}\times\sqrt{q}$ and containing $s$ bands, and then the patches are clustered into $N$ groups. Each group is denoted as $\mathbf{Y}^{(n)}$ =
$\{\mathcal{Y}^{(n, p)}\}^{K_n}_{p=1}$, $n=1, 2,..., N$,
where  $K_n$ is the number of patches in the $n$-th group.
The clustering procedure is implemented through the $K$-means++ approach \cite{k-means}. Due to the orthogonality of the subspace columns, the spectral vector $\mathcal{Z}(i, j, :)$ and $\mathcal{C}(i, j, :)$ have an identical Frobenius norm. Therefore, the self-similarities present in the image domain are mapped onto the domain of subspace coefficients. The clustering structure learned from HR-MSI is utilized to cluster coefficient patches represented as $\mathbf{C}^{(n)}$ = $\{\mathcal{C}^{(n,p)}\in\mathbb{R}^{\sqrt{q}\times\sqrt{q}\times L}\}^{K_n}_{p=1}$, where $\mathcal{C}^{(n,p)}$ and $\mathcal{Y}^{(n, p)}$ have the same location. We collect the coefficients from the $n$-th group $\{\mathcal{C}^{(n,p)}\}^{K_n}_{p=1}$ into a new tensor $\mathcal{C}^n\in\mathbb{R}^{K_n\times L\times q}$. The tensor $\mathcal{C}^n$ is composed of similar coefficient patches. Thus, we impose the low-rank tensor regularization on the grouped tensor.

In addition to the low-rank prior, the local smoothness prior
has also been widely studied for HSI-SR \cite{FSTRD,LTNN,WNNM}.
Particularly, their joint models encoded as the sum of two independent regularizers were also investigated.
However, these methods fail to fully leverage both the global correlations and local smoothness inherent in HR-HSIs, which generate suboptimal fusion performance.
Therefore, we propose a unique tensor regularizer that encodes both priors simultaneously under a unified framework.
Specifically, this regularizer requires gradient computation across all tensor modes, followed by calculating the logarithmic tensor nuclear norm (LTNN) for the gradient tensors along the third dimension.
To produce a more precise low-rank approximation, we introduce the
LTNN instead of the TNN.
Our constructed model is as follows:
\begin{equation}
\begin{aligned}
\min\limits_{\mathbf{C}_{(3)}}
&\|\mathbf{X}_{(3)} - \mathbf{R}\mathbf{C}_{(3)}{\mathbf{B}}\mathbf{S}\|_F^2+\|\mathbf{Y}_{(3)} - \mathbf{F}\mathbf{R}\mathbf{C}_{(3)}\|_F^2+{\sum_{n=1}^{N}\|\mathcal{C}^n\|_\mathrm{JLRST}}
\label{formula17}
\end{aligned}
\end{equation}
where the proposed regularizer is defined as follows:
\begin{equation}
\|\mathcal{X}\|_\mathrm{JLRST}=\sum_{i=1}^{3}\alpha_i\|{\nabla_i(\mathcal{X})}\|_\mathrm{LTNN}.
\label{formula5}
\end{equation}
Here, $\alpha_1, \alpha_2$ and $\alpha_3 $ are regularization parameters.
$\|\cdot\|_\mathrm{LTNN}$ represents the LTNN, which is denoted as follows:
for $\mathcal{X}\in\mathbb{R}^{I_1\times{I_2}\times{I_3}}$,
\begin{equation}
\begin{aligned}
\|\mathcal{X}\|_\mathrm{LTNN}=\frac{1}{I_3}\sum_{i=1}^{I_3}\sum_j\mathrm{log}\left(\sigma_j(\overline{\mathcal{X}}(:, :, i))+\epsilon\right).
\label{formula4}
\end{aligned}
\end{equation}
Here, $\epsilon>0$ is a very small constant that prevents
the logarithm from being undefined when the singular value is zero. In our experiments, we set $\epsilon=10^{-8}$.
For tensor $\overline{\mathcal{X}} = \mathbf{fft}(\mathcal{X}, [\ ], 3)$, the term $\sigma_j(\overline{\mathcal{X}}(:, :, i))$ indicates the $j$-th singular value of its $i$-th frontal slice.

\subsection{Algorithm}
In this subsection, we develop an ADMM for solving the proposed model (\ref{formula17}). The ADMM is an effective approach to address the minimization problems involving non-differentiable and higher-order functionals \cite{ADMM}.
The flowchart of the proposed JLRST method is shown in Fig.~\ref{figure0}.
\begin{figure*}[h!]
\centering
{\includegraphics[width=6in]{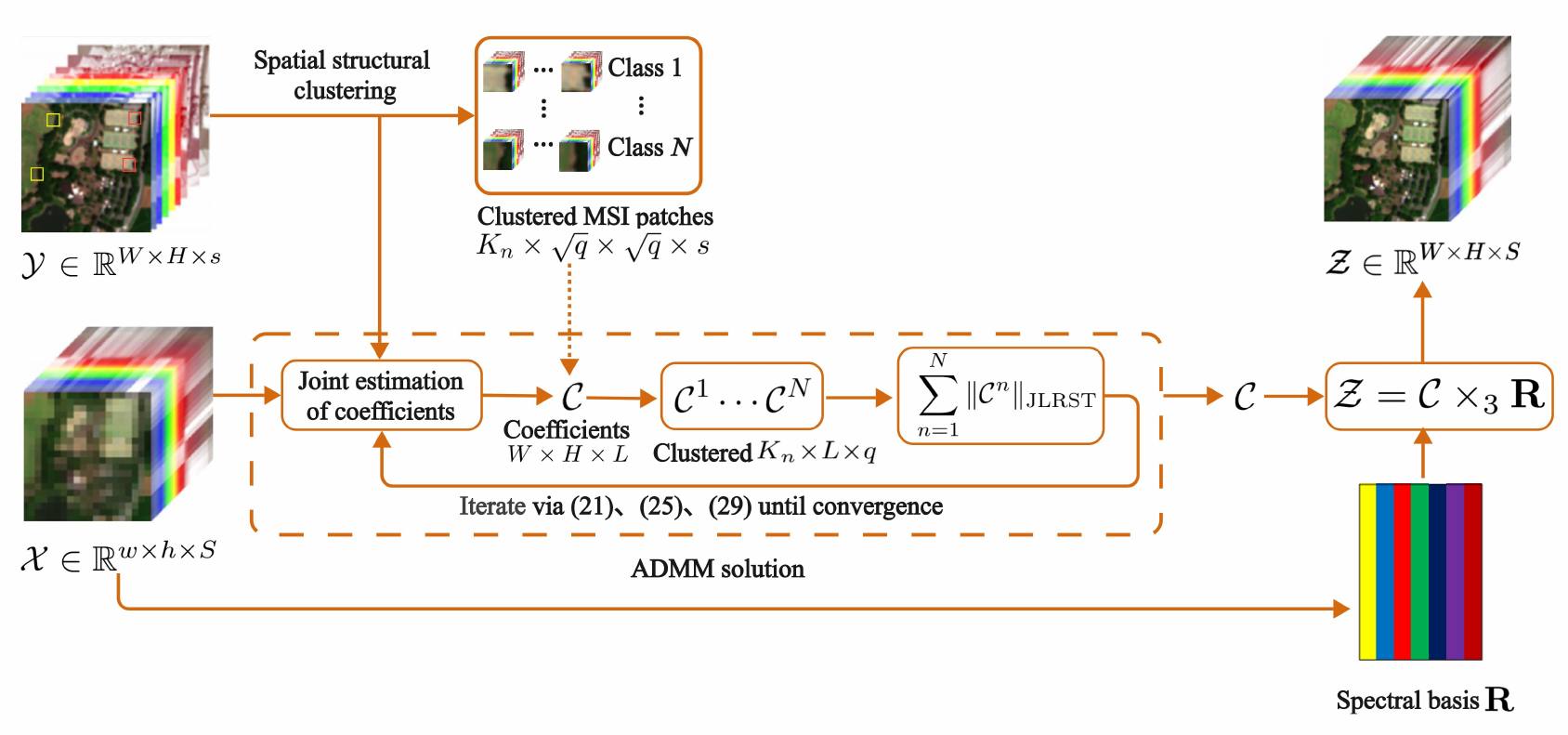}}
\caption{Illustration of the proposed JLRST method.}
\label{figure0}
\end{figure*}

More specifically, the optimization problem (\ref{formula17}) can be decomposed into several subproblems through the application of ADMM,
each of which can be solved analytically. Initially, we introduce six auxiliary variables $\mathcal{G}_1, \mathcal{G}_2, \mathcal{G}_3, \mathcal{H}_1, \mathcal{H}_2$ and $\mathcal{H}_3$ into the proposed model (\ref{formula17}). Consequently, it can be transformed into a constrained optimization problem as follows:
\begin{equation}
\begin{aligned}
\min\limits_{\mathbf{C}_{(3)}}
&\|\mathbf{X}_{(3)} - \mathbf{R}\mathbf{C}_{(3)}{\mathbf{B}}\mathbf{S}\|_F^2+\|\mathbf{Y}_{(3)} - \mathbf{F}\mathbf{R}\mathbf{C}_{(3)}\|_F^2\\
&+{\sum_{n=1}^{N}(\alpha_1\|{\mathcal{H}^{n}_1}\|_\mathrm{LTNN}+\alpha_2\|{\mathcal{H}^{n}_2}\|_\mathrm{LTNN}+\alpha_3\|{\mathcal{H}^{n}_3\|_\mathrm{LTNN}})}\\
\hbox{s.t.}\
&\mathcal{G}_1=\mathcal{C},\mathcal{G}_2=\mathcal{C},\mathcal{G}_3=\mathcal{C},  \mathcal{H}_1=\nabla_1{(\mathcal{G}_1)},
\mathcal{H}_2=\nabla_2{(\mathcal{G}_2)}, \mathcal{H}_3=\nabla_3{(\mathcal{G}_3)}.
\label{formula18}
\end{aligned}
\end{equation}
To solve problem (\ref{formula18}), six Lagrangian multipliers $\mathcal{M}_1, \mathcal{M}_2, \mathcal{M}_3$, $\mathcal{V}_1,\mathcal{V}_2$ and $\mathcal{V}_3$ are introduced. The corresponding augmented Lagrangian function is given by
\begin{equation}
\begin{aligned}
&L(\mathcal{C},\mathcal{H}_1,\mathcal{H}_2,\mathcal{H}_3,\mathcal{G}_1,\mathcal{G}_2,\mathcal{G}_3;\mathcal{V}_1,\mathcal{V}_2,\mathcal{V}_3,\mathcal{M}_1,\mathcal{M}_2,\mathcal{M}_3)\\
=&\|\mathbf{X}_{(3)} - \mathbf{R}\mathbf{C}_{(3)}{\mathbf{B}}\mathbf{S}\|_F^2+\|\mathbf{Y}_{(3)} - \mathbf{F}\mathbf{R}\mathbf{C}_{(3)}\|_F^2\\
&+\mu\sum_{t=1}^{3}\left\|\mathcal{G}_t-\mathcal{C}+\frac{\mathcal{M}_t}{2\mu}\right\|_F^2+\mu\sum_{t=1}^{3}\left\|\mathcal{H}_t-\nabla_t{(\mathcal{G}_t)}+\frac{\mathcal{V}_t}{2\mu}\right\|_F^2\\
&+\alpha_1\sum_{n=1}^{N}||\mathcal{H}_1^{n}||_{\mathrm{LTNN}}+\alpha_2\sum_{n=1}^{N}||\mathcal{H}_2^{n}||_{\mathrm{LTNN}}
+\alpha_3\sum_{n=1}^{N}||\mathcal{H}_3^{n}||_{\mathrm{LTNN}}
\label{formula19}
\end{aligned}
\end{equation}
where $\mu > 0$ serves as a penalty parameter.
The ADMM implementation requires alternating iterations across seven subproblems with simultaneous updates of six Lagrangian multipliers.
Thus, we obtain (\ref{formula20}), which is presented at the top of this page.
\begin{figure*}
\begin{equation}
\left\{
\begin{aligned}
&\mathbf{C}_{(3)}^{k+1}= \arg\min_{\mathbf{C}_{(3)}}
L(\mathcal{C},\mathcal{H}_1^{k},\mathcal{H}_2^{k},\mathcal{H}_3^{k},\mathcal{G}_1^{k},\mathcal{G}_2^{k},\mathcal{G}_3^{k};\mathcal{V}_1^{k},\mathcal{V}_2^{k},\mathcal{V}_3^{k},\mathcal{M}_1^{k},\mathcal{M}_2^{k},\mathcal{M}_3^{k})\\
&\mathcal{H}_1^{k+1}= \arg\min_{\mathcal{H}_1}
L(\mathcal{C}^{k+1},\mathcal{H}_1,\mathcal{H}_2^{k},\mathcal{H}_3^{k},\mathcal{G}_1^{k},\mathcal{G}_2^{k},\mathcal{G}_3^{k};\mathcal{V}_1^{k},\mathcal{V}_2^{k},\mathcal{V}_3^{k},\mathcal{M}_1^{k},\mathcal{M}_2^{k},\mathcal{M}_3^{k})\\
&\mathcal{H}_2^{k+1}= \arg\min_{\mathcal{H}_2}
L(\mathcal{C}^{k+1},\mathcal{H}_1^{k+1},\mathcal{H}_2,\mathcal{H}_3^{k},\mathcal{G}_1^{k},\mathcal{G}_2^{k},\mathcal{G}_3^{k};\mathcal{V}_1^{k},\mathcal{V}_2^{k},\mathcal{V}_3^{k},\mathcal{M}_1^{k},\mathcal{M}_2^{k},\mathcal{M}_3^{k})\\
&\mathcal{H}_3^{k+1}= \arg\min_{\mathcal{H}_3}
L(\mathcal{C}^{k+1},\mathcal{H}_1^{k+1},\mathcal{H}_2^{k+1},\mathcal{H}_3,\mathcal{G}_1^{k},\mathcal{G}_2^{k},\mathcal{G}_3^{k};\mathcal{V}_1^{k},\mathcal{V}_2^{k},\mathcal{V}_3^{k},\mathcal{M}_1^{k},\mathcal{M}_2^{k},\mathcal{M}_3^{k})\\
&\mathcal{G}_1^{k+1}= \arg\min_{\mathcal{G}_1}
L(\mathcal{C}^{k+1},\mathcal{H}_1^{k+1},\mathcal{H}_2^{k+1},\mathcal{H}_3^{k+1},\mathcal{G}_1,\mathcal{G}_2^{k},\mathcal{G}_3^{k};\mathcal{V}_1^{k},\mathcal{V}_2^{k},\mathcal{V}_3^{k},\mathcal{M}_1^{k},\mathcal{M}_2^{k},\mathcal{M}_3^{k})\\
&\mathcal{G}_2^{k+1}= \arg\min_{\mathcal{G}_2}
L(\mathcal{C}^{k+1},\mathcal{H}_1^{k+1},\mathcal{H}_2^{k+1},\mathcal{H}_3^{k+1},\mathcal{G}_1^{k+1},\mathcal{G}_2,\mathcal{G}_3^{k};\mathcal{V}_1^{k},\mathcal{V}_2^{k},\mathcal{V}_3^{k},\mathcal{M}_1^{k},\mathcal{M}_2^{k},\mathcal{M}_3^{k})\\
&\mathcal{G}_3^{k+1}= \arg\min_{\mathcal{G}_3}
L(\mathcal{C}^{k+1},\mathcal{H}_1^{k+1},\mathcal{H}_2^{k+1},\mathcal{H}_3^{k+1},\mathcal{G}_1^{k+1},\mathcal{G}_2^{k+1},\mathcal{G}_3;\mathcal{V}_1^{k},\mathcal{V}_2^{k},\mathcal{V}_3^{k},\mathcal{M}_1^{k},\mathcal{M}_2^{k},\mathcal{M}_3^{k})\\
&\mathcal{V}_t^{k+1}=\mathcal{V}_t^{k}+2\mu(\mathcal{H}_t^{k+1}-\nabla_t(\mathcal{G}_t^{k+1})),\ t=1,2,3\\
&\mathcal{M}_t^{k+1}=\mathcal{M}_t^{k}+2\mu(\mathcal{G}_t^{k+1}-\mathcal{C}^{k+1}),\ t=1,2,3
\end{aligned}
\label{formula20}
\right.
\end{equation}
\end{figure*}

\subsubsection{\texorpdfstring{Estimate $\mathbf{C}_{(3)}$}{C3}}
\begin{equation}
\begin{aligned}
\mathbf{C}_{(3)}^{k+1}
=\arg\min_{\mathbf{C}_{(3)}}&\|\mathbf{X}_{(3)} - \mathbf{R}\mathbf{C}_{(3)}{\mathbf{B}}\mathbf{S}\|_F^2+\|\mathbf{Y}_{(3)} - \mathbf{F}\mathbf{R}\mathbf{C}_{(3)}\|_F^2
+\mu\sum_{t=1}^{3}\left\|\mathbf{G}_{t(3)}^{k}-\mathbf{C}_{(3)}+\frac{\mathbf{M}_{t(3)}^{k}}{2\mu}\right\|_F^2
\label{formula21}
\end{aligned}
\end{equation}
where $\mathbf{G}_{t(3)}$ and $\mathbf{M}_{t(3)}$ are mode-3 unfolding matrices of $\mathcal{G}_t$ and $\mathcal{M}_t$, respectively.
Note that $\mu\left\|\mathbf{G}_{1(3)}-\mathbf{C}_{(3)}+\mathbf{M}_{1(3)}/2{\mu}\right\|_F^2$ is equivalent to $\mu\left\|\mathcal{G}_1-\mathcal{C}+{\mathcal{M}_1}/2{\mu}\right\|_F^2$.

Since the optimization problem (\ref{formula21}) is strongly convex, it has a unique global optimal solution. This solution can be determined by setting the derivative of the objective function with respect to $\mathbf{C}_{(3)}$ to zero. Therefore, we derive the following Sylvester equation:
\begin{equation}
\begin{aligned}
\mathbf{Q}_1\mathbf{C}_{(3)}+\mathbf{C}_{(3)}\mathbf{Q}_2=\mathbf{Q}_3^{k+1}
\label{formula22}
\end{aligned}
\end{equation}
where
\begin{equation}
\begin{aligned}
&\mathbf{Q}_1=[(\mathbf{F}\mathbf{R})^T\mathbf{F}\mathbf{R}+3\mu\mathbf{I}]\\
&\mathbf{Q}_2=(\mathbf{B}\mathbf{S})(\mathbf{B}\mathbf{S})^T\\
&\mathbf{Q}_3^{k+1}=(\mathbf{F}\mathbf{R})^T\mathbf{Y}_{(3)}+\mathbf{R}^T\mathbf{X}_{(3)}(\mathbf{B}\mathbf{S})^T
+\mu\sum_{t=1}^{3}\left (\mathbf{G}_{t(3)}^{k}+\frac{\mathbf{M}_{t(3)}^{k}}{2\mu}\right).
\label{formula23}
\end{aligned}
\end{equation}
Note that the spectral subspace satisfies $\mathbf{R}^T\mathbf{R}=\mathbf{I}$, where $\mathbf{I}$ denotes the identity matrix.
The Sylvester equation (\ref{formula22}) can be solved by utilizing the characteristics of $\mathbf{B}$ and $\mathbf{S}$ \cite{Sylvester}.

\subsubsection{\texorpdfstring{Estimate $\mathcal{H}_t$}{Ht}}
\begin{equation}
\begin{aligned}
\mathcal{H}_t^{k+1} =\arg\min_{\mathcal{H}_t}&\mu\left\|\mathcal{H}_t-\nabla_t{(\mathcal{G}_t^{k})}+\frac{\mathcal{V}_t^{k}}{2\mu}\right\|_F^2+\alpha_t\sum_{n=1}^{N}\left\|\mathcal{H}_t^{n}\right\|_{\mathrm{LTNN}}.
\label{formula24}
\end{aligned}
\end{equation}
Its solution can be acquired by adopting a cluster-by-cluster strategy, i.e.,
\begin{equation}
\begin{aligned}
\arg\min_{\mathcal{H}_t^{n}}&\mu\sum_{n=1}^{N}\left\|\mathcal{H}_t^{n}-\nabla_t{(\mathcal{G}_t^{n,k})}+\frac{\mathcal{V}_t^{n,k}}{2\mu}\right\|_F^2+\alpha_t\sum_{n=1}^{N}||\mathcal{H}_t^{n}||_{\mathrm{LTNN}}.
\label{formula25}
\end{aligned}
\end{equation}
According to the work in \cite{MIDB}, the solution of $\mathcal{H}_t^{n}$ is given by
\begin{equation}
\begin{aligned}
\mathcal{H}_t^{n,k+1}=\mathcal{D}_{\mathrm{LTNN}}^{\alpha_t/2\mu}(\nabla_t{(\mathcal{G}_t^{n,k})}-\frac{\mathcal{V}_t^{n,k}}{2\mu})=\mathcal{U}\mathcal{S}_{\mathrm{LTNN}}^{\alpha_t/2\mu,\epsilon}\mathcal{V}^T
\label{formula26}
\end{aligned}
\end{equation}
where $\mathcal{S}_{\mathrm{LTNN}}^{\alpha_t/2\mu,\epsilon}$=$\mathbf{ifft}(\mathcal{\overline{S}}_{\mathrm{LTNN}}^{\alpha_t/2\mu,\epsilon},[\ ],3)$, $\nabla_t{(\mathcal{G}_t^{n,k})}-\frac{\mathcal{V}_t^{n,k}}{2\mu}$=$\mathcal{U}\mathcal{S}\mathcal{V}^T$. The
 thresholding operator $\mathcal{\overline{S}}_{\mathrm{LTNN}}^{\alpha_t/2\mu,\epsilon}$ is defined as follows:
\begin{equation}
\begin{aligned}
\mathcal{\overline{S}}_{\mathrm{LTNN}}^{\alpha_t/2\mu,\epsilon}(i_1,i_2,i_3)
=\left\{
\begin{array}{ll}
0,&c_{2}\leq 0\\
\mathrm{sign}(\overline{S}(i_1,i_2,i_3))(\frac{c_1+\sqrt{c_2}}{2}),&c_{2}>0.
\end{array}
\right.
\label{formula27}
\end{aligned}
\end{equation}
Specifically, $\overline{S}$= $\mathbf{fft}(\mathcal{S,[\ ]},3)$, $c_{1}=\left|\overline{S}(i_1,i_2,i_3) \right|-\epsilon$, $c_{2}=c_{1}^2-4(\alpha_t/2\mu-\epsilon\left|\overline{S}(i_1,i_2,i_3) \right|)$. Through this approach,  the threshold operator applies reduced shrinkage to larger singular values while imposing greater shrinkage on smaller ones\cite{MNRH}.

\subsubsection{\texorpdfstring{Estimate $\mathcal{G}_t$}{Gt}}
\begin{equation}
\begin{aligned}
\mathcal{G}_t^{k+1}=\arg\min_{\mathcal{G}_t}&\mu\left\|\mathcal{G}_t-\mathcal{C}^{k+1}+\frac{\mathcal{M}_t^{k}}{2\mu}\right\|_F^2+\mu\left\|\mathcal{H}_t^{k+1}-\nabla_t{(\mathcal{G}_t)}+\frac{\mathcal{V}_t^{k}}{2\mu}\right\|_F^2.
\label{formula28}
\end{aligned}
\end{equation}
Taking the derivative in (\ref{formula28}) with respect to $\mathcal{G}_t$, we have
\begin{equation}
\begin{aligned}
(\mathcal{I}+\nabla_t^{\mathrm{T}}\nabla_t)(\mathcal{G}_t)
=\mathcal{C}^{k+1}-\frac{\mathcal{M}_t^{k}}{2\mu}+\nabla_t^{\mathrm{T}}\left (\mathcal{H}_t^{k+1}+\frac{\mathcal{V}_t^{k}}{2\mu}\right)
\label{formula29}
\end{aligned}
\end{equation}
where $\nabla_t^{\mathrm{T}}(\cdot)$ signifies the transpose operator of $\nabla_t(\cdot)$. Due to the linearity of tensor difference operations, we can employ the multi-dimensional FFT to diagonalize $\nabla_t(\cdot)$'s difference tensors $\mathcal{D}_t$. This enables us to compute the optimal solution of (\ref{formula29}) efficiently, i.e.,
\begin{equation}
\begin{split}
\mathcal{G}_t^{k+1}=\mathcal{F}^{-1}\left(
\frac{\mathcal{F}(\mathcal{C}^{k+1}-\frac{\mathcal{M}_t^{k}}{2\mu})+\mathcal{Q}}{\mathbf{1}+\mathcal{F}(\mathcal{D}_t)^{\mathrm{T}}\odot\mathcal{F}(\mathcal{D}_t)}\right)
\label{formula30}
\end{split}
\end{equation}
where $\mathcal{Q}=\mathcal{F}(\mathcal{D}_t)^{\mathrm{T}}\odot\mathcal{F}(\mathcal{H}_t^{k+1}+\mathcal{V}_t^{k}/2\mu)$,
$\mathbf{1}$ is a tensor with all elements being 1, and $\odot$ represents component-wise multiplication.

Algorithm \ref{algorithm1} outlines the proposed JLRST method.

\begin{algorithm}[!ht]\caption{JLRST for HSI-SR} \label{algorithm1}
    \renewcommand{\algorithmicrequire}{\textbf{Input:}}
	\renewcommand{\algorithmicensure}{\textbf{Output:}}
    \begin{algorithmic}[1]
        \REQUIRE $\mathcal{X}, \mathcal{Y}, \alpha_t, N, L$, and $\sqrt{q}$
        \STATE  \qquad Estimate spectral basis $\mathbf{R}$ from $\mathbf{X}_{(3)}$ via SVD;
        \STATE  \qquad Obtain the cluster structure from the HR-MSI;
        \STATE \quad\textbf{while} not converged \textbf{do}
        \STATE  \qquad Compute $\mathbf{C}_{(3)}$ by (\ref{formula22});
        \STATE  \qquad Compute $\mathcal{H}_{t}$ by (\ref{formula26});
        \STATE  \qquad Compute $\mathcal{G}_t$ by (\ref{formula30});
        \STATE  \qquad Compute Lagrangian multipliers $\mathcal{V}_t$ and $\mathcal{M}_t$ by the last two equations of (\ref{formula20});
        \STATE \quad\textbf{end while}
         \STATE  \qquad$\mathcal{Z}= \mathcal{C}\times_{3}\mathbf{R}$;
         \ENSURE $\mathcal{Z}$.
    \end{algorithmic}
\end{algorithm}

\subsection{Convergence Analysis}
First, we derive the Karush-Kuhn–Tucker (KKT) condition of \eqref{formula18}. The condition is as follows:

\begin{equation}\label{kkt}
\left\{\begin{array}{ll}
\mathcal{O}&=\mathcal{G}_t^\ast-\mathcal{C}^\ast\\
\mathcal{O}&=\mathcal{H}_t^\ast-\nabla_t{(\mathcal{G}_t^\ast)} \\
\mathbf{O}&=\mathbf{R}^T(\mathbf{R} \h C_{(3)}^\ast\h B\h S  -\h X_{(3)})\mathbf{S}^T\mathbf{B}^T  + \h R^T \h F^T (\h F \h R\h C_{(3)}^\ast-\h Y_{(3)})- \tfrac{1}{2}\sum\limits_{i=1}^{3} \h M_{i(3)}^\ast   \\
\mathcal{O} &= \mathcal{M}_t^\ast- \nabla^T_t\mathcal{V}_t^\ast\\
\mathcal{O}& \in \alpha_t\partial_{\mathcal{H}_t} \|\mathcal{H}_t^\ast\|_{\rm LTNN} + \mathcal{V}_t^\ast,\ \hbox{for}\ t = 1,2,3.
\end{array}\right.
\end{equation}

Now, we provide the convergence analysis as the following theorem:
\begin{theorem}
Let $\{\mathcal{C}^k, \mathcal{G}_t^k, \mathcal{H}_t^k \}$  be generated by Algorithm \ref{algorithm1}. Assume the successive differences of the multipliers $\mathcal{M}_t^{k+1}-\mathcal{M}_t^{k}\rightarrow \mathcal{O}, \mathcal{V}_t^{k+1}-\mathcal{V}_t^{k}\rightarrow \mathcal{O},$ when $k\rightarrow \infty$, and all the variables are bounded, then there exists a subsequence whose accumulation point satisfies the KKT condition of \eqref{formula18}.
\end{theorem}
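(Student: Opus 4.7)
The plan is to combine boundedness-based compactness with the first-order optimality conditions of the seven ADMM subproblems, then pass to the limit along a common subsequence to verify each line of the KKT system \eqref{kkt}. First, because all primal and dual iterates are bounded by assumption, Bolzano--Weierstrass produces an index subsequence $\{k_j\}$ on which $\mathcal{C}^k, \mathcal{G}_t^k, \mathcal{H}_t^k, \mathcal{M}_t^k, \mathcal{V}_t^k$ all converge, say to $\mathcal{C}^\ast, \mathcal{G}_t^\ast, \mathcal{H}_t^\ast, \mathcal{M}_t^\ast, \mathcal{V}_t^\ast$. Substituting the updates $\mathcal{M}_t^{k+1}-\mathcal{M}_t^k=2\mu(\mathcal{G}_t^{k+1}-\mathcal{C}^{k+1})$ and $\mathcal{V}_t^{k+1}-\mathcal{V}_t^k=2\mu(\mathcal{H}_t^{k+1}-\nabla_t(\mathcal{G}_t^{k+1}))$ into the hypotheses $\mathcal{M}_t^{k+1}-\mathcal{M}_t^k\to\mathcal{O}$ and $\mathcal{V}_t^{k+1}-\mathcal{V}_t^k\to\mathcal{O}$ immediately yields the first two KKT lines $\mathcal{G}_t^\ast=\mathcal{C}^\ast$ and $\mathcal{H}_t^\ast=\nabla_t(\mathcal{G}_t^\ast)$. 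After a further subsequence on which the shifted iterates indexed by $k_j+1$ also converge, the same identity forces $\mathcal{G}_t^{k_j+1}\to\mathcal{C}^\ast$, hence $\mathcal{G}_t^{k_j+1}-\mathcal{G}_t^{k_j}\to\mathcal{O}$, an auxiliary estimate used repeatedly below.

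Next I would take limits in the first-order conditions of the remaining three subproblems. The strongly convex $\mathbf{C}_{(3)}$-update \eqref{formula21} is characterized by the Sylvester equation \eqref{formula22}; after rewriting $\mu(\mathbf{G}_{t(3)}^k-\mathbf{C}_{(3)}^{k+1})+\mathbf{M}_{t(3)}^k/2$ as $\mathbf{M}_{t(3)}^{k+1}/2+\mu(\mathbf{G}_{t(3)}^k-\mathbf{G}_{t(3)}^{k+1})$ through the multiplier update, the auxiliary estimate lets me pass to the limit and obtain the third KKT line. The $\mathcal{G}_t$-subproblem \eqref{formula28} is smooth and quadratic; its first-order condition, after substituting both multiplier updates, collapses exactly to $\mathcal{M}_t^{k+1}=\nabla_t^T\mathcal{V}_t^{k+1}$, whose limit is the fourth KKT line. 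The $\mathcal{H}_t$-subproblem \eqref{formula24} yields the inclusion $\mathcal{O}\in 2\mu(\mathcal{H}_t^{k+1}-\nabla_t(\mathcal{G}_t^k)+\mathcal{V}_t^k/(2\mu))+\alpha_t\partial\|\mathcal{H}_t^{k+1}\|_{\rm LTNN}$, which the same substitution recasts as $\mathcal{O}\in 2\mu\nabla_t(\mathcal{G}_t^{k+1}-\mathcal{G}_t^k)+\mathcal{V}_t^{k+1}+\alpha_t\partial\|\mathcal{H}_t^{k+1}\|_{\rm LTNN}$.

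The main obstacle will be justifying the limit inside this last subdifferential inclusion, because $\|\cdot\|_{\rm LTNN}$ is non-convex and classical convex subdifferential calculus does not apply. I would appeal to the closed-graph property of the Mordukhovich limiting subdifferential for the lower-semicontinuous function $\|\cdot\|_{\rm LTNN}$: continuity of $\sigma\mapsto\log(\sigma+\epsilon)$ combined with the continuous dependence of singular values on the frontal slices of $\overline{\mathcal{H}_t^{k}}$ delivers $\|\mathcal{H}_t^{k_j+1}\|_{\rm LTNN}\to\|\mathcal{H}_t^\ast\|_{\rm LTNN}$; meanwhile, the subgradient element singled out by the inclusion equals $-[\mathcal{V}_t^{k+1}+2\mu\nabla_t(\mathcal{G}_t^{k+1}-\mathcal{G}_t^k)]/\alpha_t$, which is bounded along $\{k_j\}$ and converges to $-\mathcal{V}_t^\ast/\alpha_t$ by what was already established. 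The closed-graph property then places $-\mathcal{V}_t^\ast/\alpha_t$ inside $\partial\|\mathcal{H}_t^\ast\|_{\rm LTNN}$, which is exactly the fifth KKT line and completes the proof.
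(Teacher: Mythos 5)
Your proof follows the same route as the paper's: extract a convergent subsequence by boundedness, read the first two KKT lines off the multiplier-update identities combined with the assumed vanishing of the multiplier differences, and obtain the remaining lines by passing to the limit in the first-order conditions of the subproblems. In fact you supply considerably more detail than the paper for the last two lines: the paper writes out only the $\mathcal{C}$-subproblem's optimality condition and dismisses the $\mathcal{G}_t$- and $\mathcal{H}_t$-subproblems in one sentence, whereas you observe that the $\mathcal{G}_t$-update yields $\mathcal{M}_t^{k+1}=\nabla_t^{T}\mathcal{V}_t^{k+1}$ exactly at every iteration, and you correctly flag that $\|\cdot\|_{\rm LTNN}$ is non-convex, so the final inclusion must be passed to the limit via the closed-graph property of the limiting (Mordukhovich) subdifferential rather than by convex calculus; the paper is silent on both points. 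The one step that is not airtight as written is your auxiliary estimate: from the further subsequence you only obtain $\lim_{j}\mathcal{G}_t^{k_j+1}=\lim_{j}\mathcal{C}^{k_j+1}$, and identifying this common limit with $\mathcal{C}^{\ast}$ (equivalently, concluding $\mathcal{G}_t^{k_j+1}-\mathcal{G}_t^{k_j}\rightarrow\mathcal{O}$) needs an extra ingredient such as vanishing successive differences of the primal iterates. The paper's own proof makes the same silent identification when it evaluates the mixed-index optimality condition of the $\mathcal{C}$-subproblem entirely at starred quantities, so this is a weakness of the theorem as stated rather than a defect specific to your argument.
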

\begin{proof}
    According to the iteration scheme in \eqref{formula20} as well as   $\lim\limits_{k\rightarrow \infty} \mathcal{M}_t^{k+1}-\mathcal{M}_t^{k} = \mathcal{O}$,
    $\lim\limits_{k\rightarrow \infty} \mathcal{V}_t^{k+1}-\mathcal{V}_t^{k} = \mathcal{O}$,
    we get

\begin{equation*}
\left\{
    \begin{aligned}
        &\lim\limits_{k\rightarrow \infty}\mathcal{G}_t^k-\mathcal{C}^k=\mathcal{O} \\
&\lim\limits_{k\rightarrow \infty}\mathcal{H}_t^k-\nabla_t{(\mathcal{G}_t^k)}=\mathcal{O}, \quad  t = 1,2,3.
    \end{aligned}
    \right.
\end{equation*}
With the boundedness of all the variables, it shows that there exists a bounded subsequence such that $\lim\limits_{j\rightarrow \infty}  \mathcal{G}_t^{k_j} = \mathcal{G}_t^\ast, \lim\limits_{j\rightarrow \infty}\mathcal{C}^{k_j} = \mathcal{C}^{\ast}$  and $\lim\limits_{j \rightarrow \infty}  \mathcal{H}_t^{k_j} = \mathcal{H}_t^{\ast}$. Thus, the first two equations in \eqref{kkt}  hold.
Additionally, the optimality condition in the $\mathcal{C}$-subproblem can be written as
\begin{equation*}
  \begin{split}
        & \mathbf{R}^T(\mathbf{R} \h C_{(3)}^\ast\h B\h S  -\h X_{(3)})\mathbf{S}^T\mathbf{B}^T + \h R^T \h F^T (\h F \h R\h C_{(3)}^\ast-\h Y_{(3)}) +  \sum_{i = 1}^3(\mu\h C_{(3)}^\ast- \mu\h G_{i(3)}^\ast - \h M_{i(3)}^\ast/2) = \mathbf{O}.
  \end{split}
\end{equation*}
Note that $\mathcal{C}^{\ast} = \mathcal{G}^{\ast}_i$, which leads to the third equation of \eqref{kkt}.
We can obtain the remaining equations in \eqref{kkt} by referring to the optimality conditions of the other subproblems.

\end{proof}

\section{Numerical
Experiments}
\label{section4}
In this section, we begin by detailing the datasets utilized in our experiments. Subsequently, we introduce five widely recognized evaluation metrics for assessing the effectiveness of our approach.
Furthermore, an in-depth analysis is performed on the impact of different parameters on the experimental results, and a table about optimal parameters is provided.
To comprehensively evaluate the proposed method's performance, we carry out systematic comparisons with six advanced approaches using both quantitative metrics and visual quality.
Additionally, ablation experiments are executed to assess the influence of each regularization term on the results.
An empirical convergence analysis of the proposed method is also conducted.
Finally, we compare the running time of all testing methods.

\subsection{Datasets}

The Pavia University dataset \cite{pavia university}, acquired using the ROSIS sensor, comprises 115 spectral bands with an original spatial resolution of 610 × 340 pixels. After eliminating bands with a low signal-to-noise ratio (SNR) and trimming the subregion, we selected the upper-left 256 $\times$ 256 pixels comprising 93 spectral bands as the reference image. The Pavia University dataset is processed using an IKONOS-like reflectance spectral response filter to simulate the HR-MSI with four bands.

The second dataset, Indian Pines \cite{indian pines}, was captured by the NASA AVIRIS sensor over the Indian Pines test site.
The original image consists of 145 $\times$ 145 pixels and 220 spectral bands. After discarding bands severely impacted by noise, the dataset was refined to dimensions of 128 $\times$ 128 $\times$ 184 as the ground truth. The HR-MSI with six bands is generated by Landsat7-like spectral response.

For the third dataset, we utilize the well-known and widely used CAVE dataset \cite{CAVE}. This dataset comprises 32 HSIs obtained from real-world indoor scenes using a universal combined pixel camera. It has become widely favored by researchers for HSI-SR experiments.
All the HSIs in the dataset contain 512 $\times$ 512 spatial pixels and 31 spectral bands.
This experiment uses ``Balloons" as the test image. Spectral downsampling is performed on the HR-HSI via the Nikon D700 camera's spectral response to produce the HR-MSI.

At last, we employ the University of Houston \cite{Houston} as the fourth dataset, which features 601 × 2384 pixels and 48 bands that span wavelengths from 0.38 to 1.05 micrometers. In our experiments, we chose a sub-image measuring 320 $\times$ 320 $\times$ 46 from the entire dataset as a reference for the HR-HSI. The spectral response of a Nikon D700 camera is used to generate the three-band HR-MSI.

To obtain the LR-HSI, we use a symmetric Gaussian blur with a standard deviation of 2 and convolve the HR-HSI using a 7 $\times$ 7 kernel, and then downsample every 4 pixels in both spatial dimensions for each HSI band. To better simulate real-world conditions, Gaussian noise is added to the LR-HSI and HR-MSI, with SNRs of 20 dB and 25 dB, respectively.

\subsection{Quantitative Metrics and Compared Methods  }

To quantitatively assess the effectiveness of HSI-SR approaches, we establish a comprehensive assessment framework incorporating five widely used metrics: the average peak signal-to-noise ratio (PSNR) \cite{WNNM}, structural similarity (SSIM) \cite{ssim}, relative dimensionless global error in synthesis (ERGAS) \cite{ergas}, spectral angle mapper (SAM) \cite{sam}, and universal image quality index (UIQI) \cite{uiqi}. Higher PSNR, SSIM, and UIQI values indicate superior fusion quality, whereas lower ERGAS and SAM values signify better fusion performance.

In our experiments, comparative methods encompass coupled sparse tensor factorization (CSTF) \cite{CSTF}, unidirectional TV (UTV) \cite{UTV}, CTRF \cite{CTRF}, factor smoothed TR decomposition (FSTRD) \cite{FSTRD}, logarithmic low-rank TR decomposition (LogLRTR) \cite{LTNN}, and continuous low-rank factorization (CLoRF) \cite{CLoRF}. To guarantee fairness during the comparison process, the parameters of the comparison approaches are adjusted to achieve optimal performance.
In experiments, while the relative error $\| \mathcal{Z}^{k}- \mathcal{Z}^{k-1}\|_F/\| \mathcal{Z}^{k}\|_F <10^{-4}$ or the number of iterations reaches $100$, the iteration will stop.

\subsection{Parameters Analysis}
The selection of parameters plays a crucial role in the experimental results, greatly affecting the algorithm's convergence rate. Our method is characterized by seven parameters: the atom count $L$, the cluster number $N$, the patch size $q$, three regularization parameters $\alpha_1$, $\alpha_2$ and $\alpha_3$, along with an algorithmic parameter $\mu$.
When adjusting these parameters, we first draw on our experience to estimate appropriate initial values. Subsequently, we gradually fine-tune the parameters through a trial-and-error approach to determine the combination that maximizes the PSNR value.
Generally speaking, only the last four parameters need to be fine-tuned.

\begin{figure}[h!]
\centering\vspace{-1em}
\subfigure[]{
\includegraphics[width=2in]{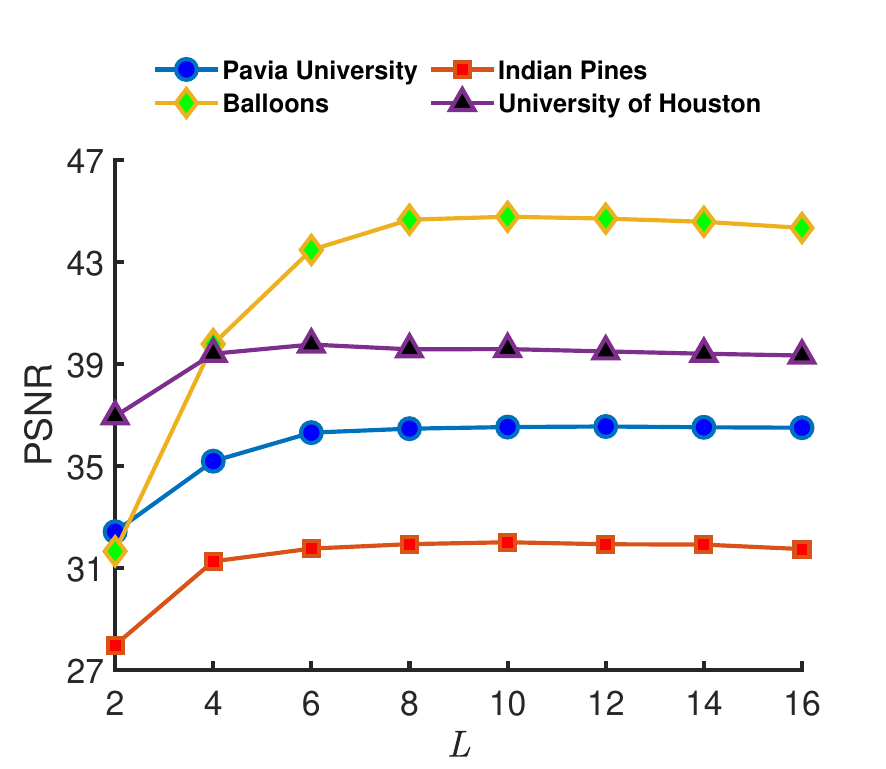}}
\subfigure[]{
\includegraphics[width=2in]{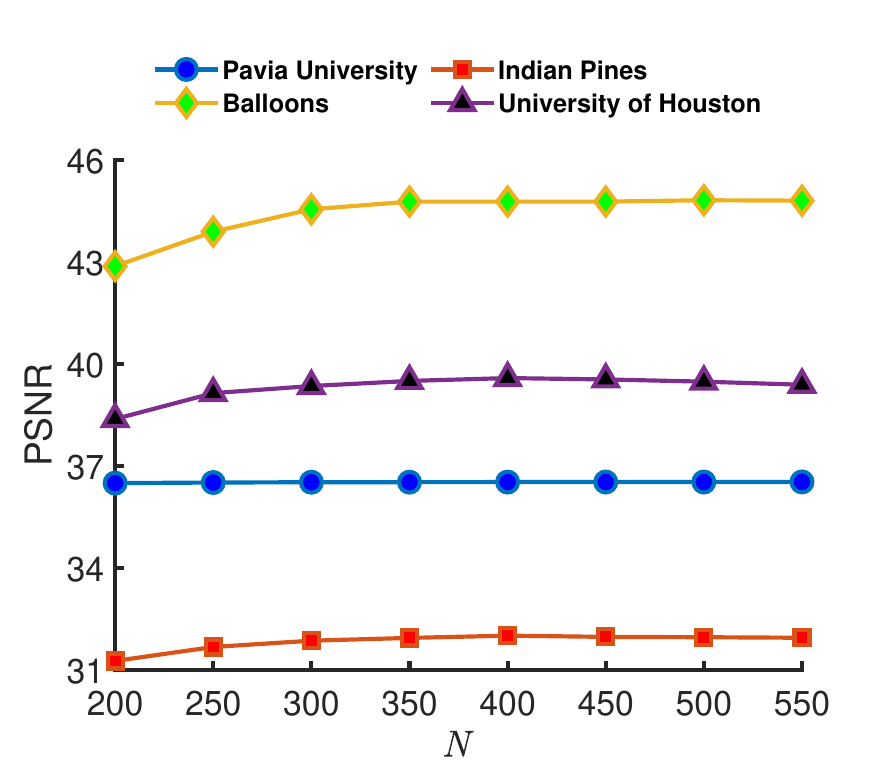}}
\subfigure[]{
\includegraphics[width=2in]{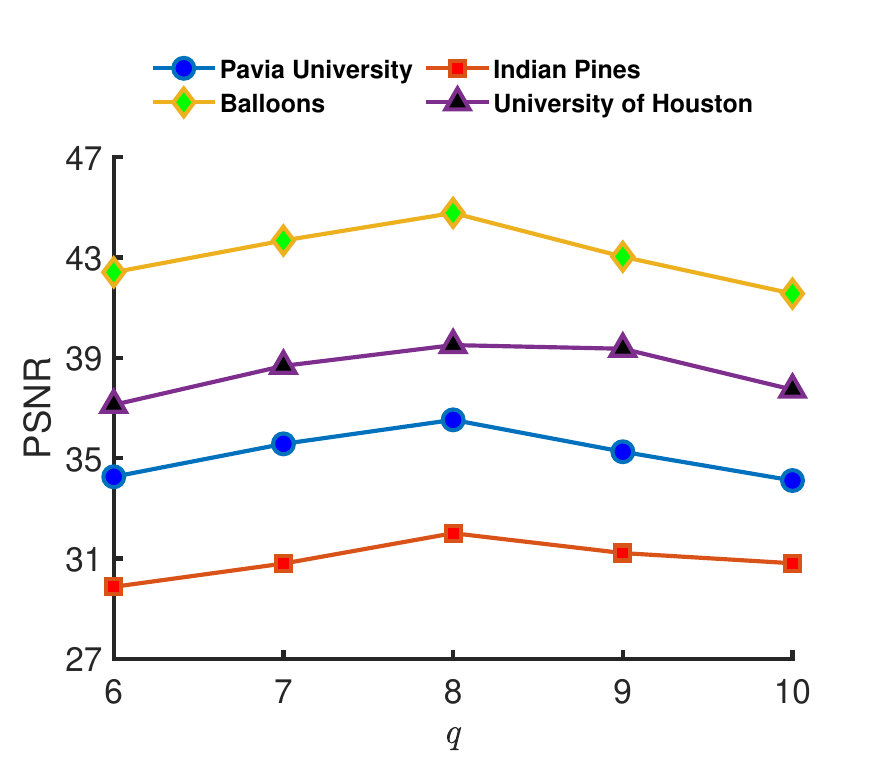}}
\caption{
Plots of PSNR against parameters $L$, $N$ and $q$, respectively.}
\label{figure1}
\end{figure}

To investigate the impact of the subspace dimension $L$, we plot the PSNR values against $L$ for four test images, as presented in Fig.~\ref{figure1}(a).
This graph shows that as the value of $L$ rises from 2 to 10, the PSNR value of each fused image also increases accordingly. However, when $L$ exceeds 10, the PSNR value tends to stabilize.
Consequently, we set $L=10$ in all experiments.
This phenomenon indicates that spectral vectors exist in low-dimensional subspaces, and the spectral dimension can be reduced through the subspace representation.

Similarly,
we depict the PSNR values in relation to $N$ for the four test images, as shown in Fig.~\ref{figure1}(b). An analysis of Fig.~\ref{figure1}(b) reveals that
the choice of the number of clusters has a significant impact on the experimental results.
This also suggests that utilizing non-local similarities to enhance fusion performance is an effective strategy.
At the same time, in order to balance algorithm efficiency, we fix $N=400$ in all experiments.

Furthermore, the patch size $q$ also influences the fusion performance. Based on prior experience, the value of $q$ is usually selected within the range of [6,10]. Fig.~\ref{figure1}(c) plots the PSNR values against $q$ for the test images. It can be observed that when $q=8$, the PSNR reaches its maximum. A smaller $q$ loses local spatial information, while a larger $q$ reduces patch homogeneity. Therefore, we set $q=8$ in all experiments.

The selection of $\alpha_1$, $\alpha_2$, $\alpha_3$ and $\mu$ is crucial. In general, the fusion result primarily depends on the regularization parameters $\alpha_1$, $\alpha_2$ and $\alpha_3$, while the penalty parameter $\mu$ solely governs the convergence speed without affecting the final result. Therefore, our analysis focuses on the effect of the regularization parameters on the experimental results. Fig.~\ref{figure_par}(a)–(c) present the PSNR curves for the JLRST method across the Pavia University and Indian Pines datasets as parameters $\alpha_1$, $\alpha_2$ and $\alpha_3$ are varied.
As seen in Fig.~\ref{figure_par}(a), the highest PSNR on the Pavia University dataset occurs at $\alpha_1 = 10^{-1}$, while the Indian Pines dataset attains its maximum at $\alpha_1 = 10^{-2}$.
Fig.~\ref{figure_par}(b) indicates that the PSNR for Pavia University remains largely stable with increasing $\alpha_2$, and both datasets yield optimal results at $\alpha_2 = 10^{-2}$.
In Fig.~\ref{figure_par}(c), a similar trend is observed for both datasets as $\alpha_3$ increases from $10^{-5}$ to $10^{0}$, with peak PSNR values achieved at $\alpha_3 = 10^{-2}$.
To further evaluate the robustness of these optimal ranges under higher signal-to-noise conditions, we conduct an additional sensitivity analysis with LR-HSI at 30 dB and HR-MSI at 35 dB, as shown in Fig.~\ref{figure_par_2}. Compared with the previous lower-noise case, the PSNR curves in Fig.~\ref{figure_par_2} exhibit a similar overall trend. Therefore, the ranges of values for $\alpha_1$, $\alpha_2$, and $\alpha_3$ are approximately the same.
From these results, we optimize $\alpha_1$, $\alpha_2$ and $\alpha_3$ within the ranges [0.001, 1] for Pavia University and [0.001, 0.1] for Indian Pines to determine the best parameter combinations.

By the trial-and-error method, the optimal sets of parameters for four images are obtained, which are shown in Table~\ref{table1}. Specifically, in our experiments, the values of $L$ and $N$ are consistently fixed at 10 and 400, respectively.
For fairness, we adjust the parameters of the comparison approaches in the same way.

\begin{figure}[h!]
\centering\vspace{-1em}
\subfigure[]{
\includegraphics[width=2in]{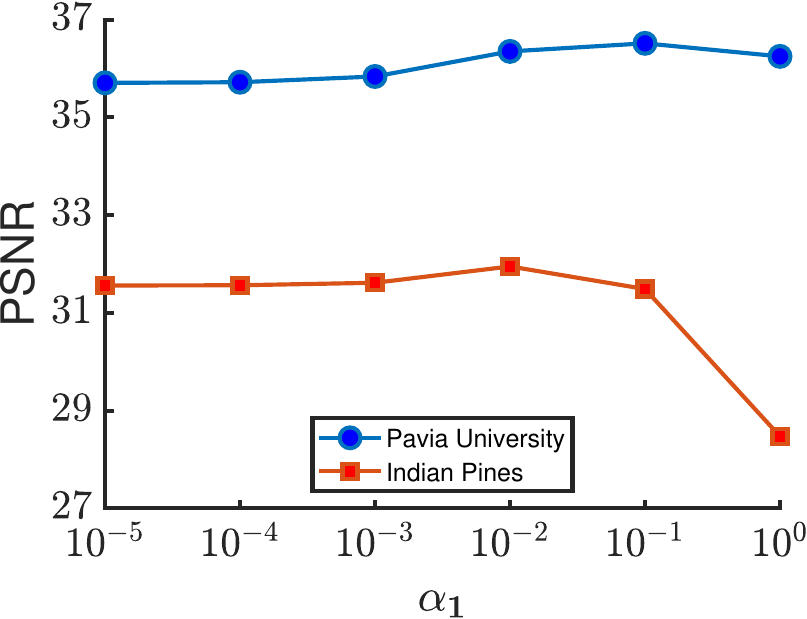}}
\subfigure[]{
\includegraphics[width=2in]{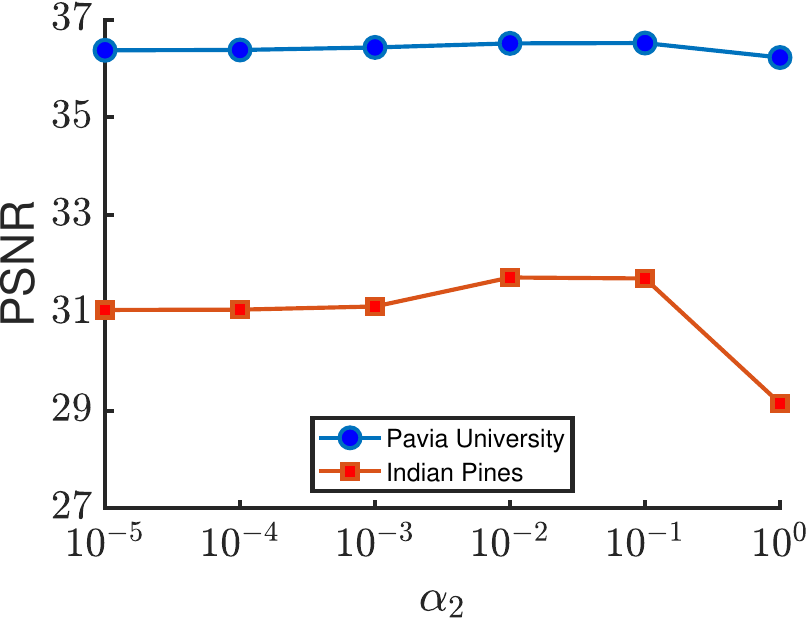}}
\subfigure[]{
\includegraphics[width=2in]{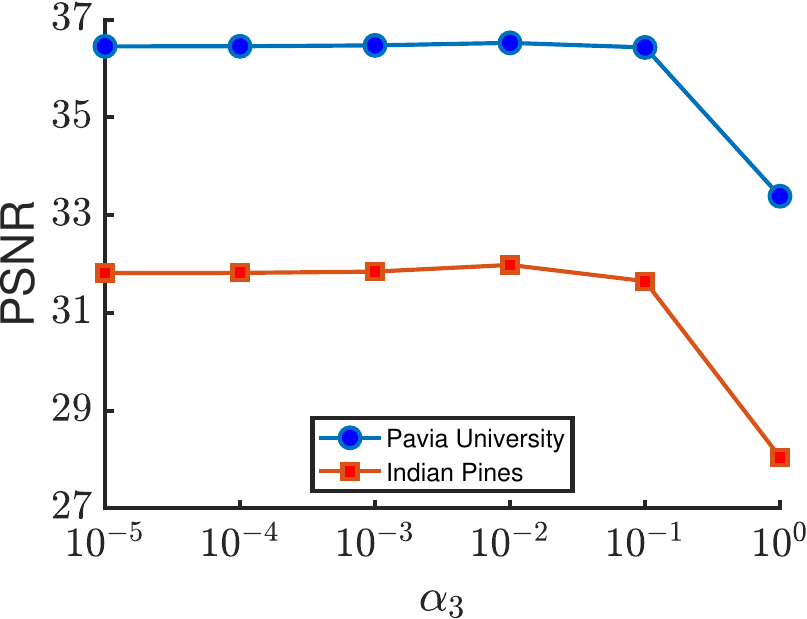}}
\caption{
Sensitivity analysis of parameters $\alpha_1$, $\alpha_2$ and $\alpha_3$ (LR-HSI: 20 dB, HR-MSI: 25 dB)}.
\label{figure_par}
\end{figure}

\begin{figure}[h!]
\centering\vspace{-1em}
\subfigure[]{
\includegraphics[width=2in]{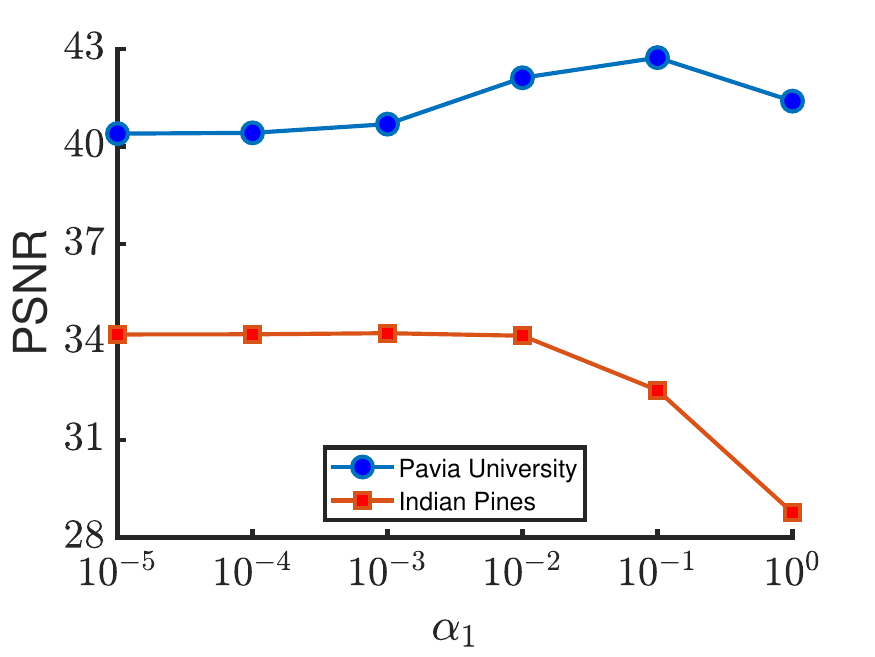}}
\subfigure[]{
\includegraphics[width=2in]{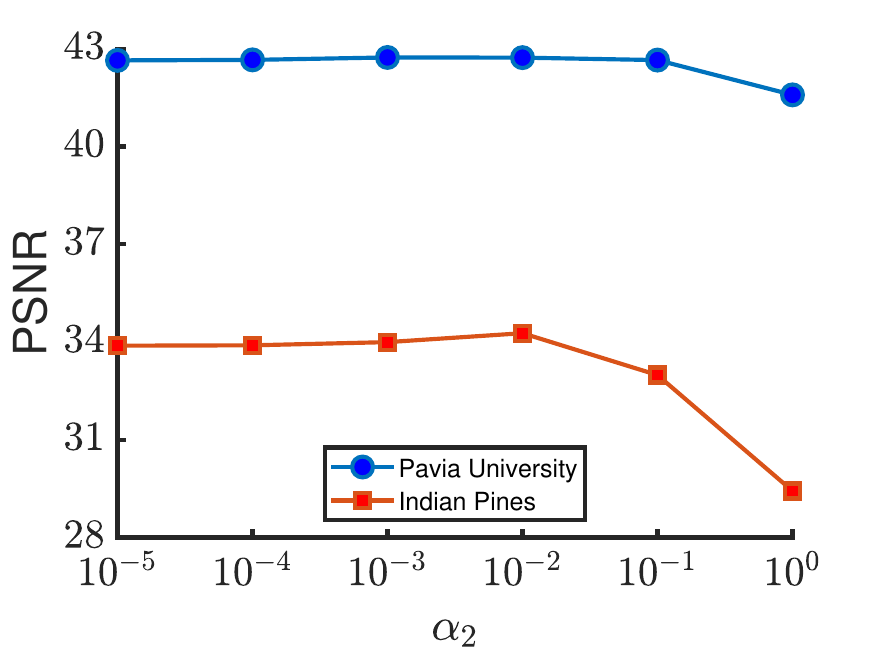}}
\subfigure[]{
\includegraphics[width=2in]{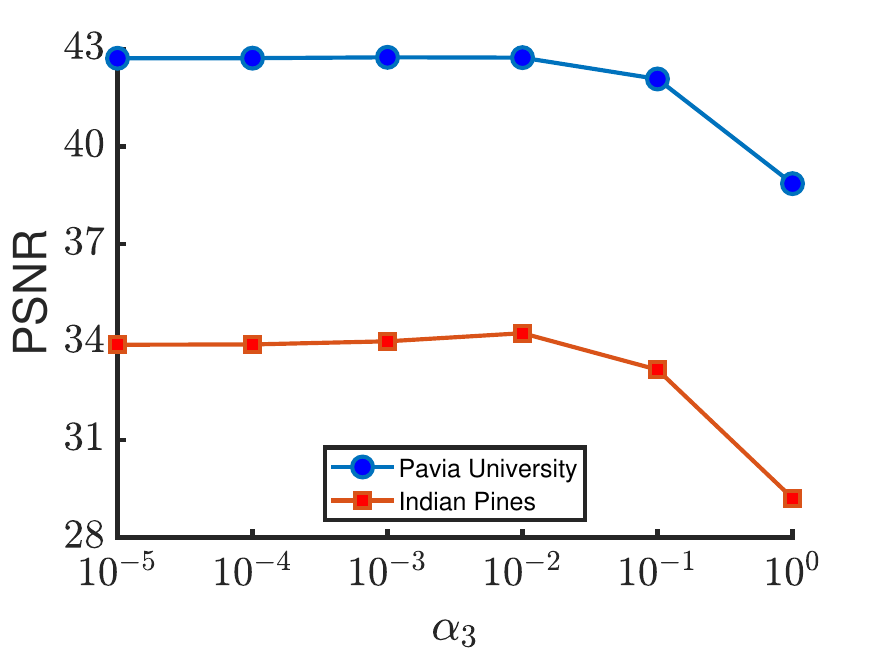}}
\caption{
Sensitivity analysis of parameters $\alpha_1$, $\alpha_2$ and $\alpha_3$ (LR-HSI: 30 dB, HR-MSI: 35 dB)}.
\label{figure_par_2}
\end{figure}

\begin{table}[htbp]\vspace{-1em}
\caption{Selected parameter sets for the proposed method}
\centering
\scalebox{0.8}{\large{
\begin{tabular}{lcccccc}
\hline
{\rule[-1mm]{0mm}{3.5mm}}Image  &$\alpha_1$ &$\alpha_2$ &$\alpha_3$ &$\mu$     \\
\hline
{\rule[-1mm]{0mm}{3.5mm}}Pavia University &0.3 &0.03 &0.009 &0.05  \\
{\rule[-1mm]{0mm}{3.5mm}}Indian Pines      &0.02 &0.03 &0.03 &0.04  \\
{\rule[-1mm]{0mm}{3.5mm}}Balloons   &0.25 &0.2 &0.1 &0.09    \\
{\rule[-1mm]{0mm}{3.5mm}}University of Houston    &0.08 &0.2 &0.05 &0.05   \\
\hline
\label{table1}
\end{tabular}}}\vspace{-1em}
\end{table}

\subsection{Experimental Results}
\begin{table*}[htbp]\vspace{-1em}
\caption{Quantitative evaluation of six different fusion approaches}
\centering
\resizebox{\textwidth}{!}{
\newcommand{\rb}[1]{\raisebox{1.0ex}[0pt]{#1}}
\begin{tabular}{lcccccccccc}
\hline
{\rule[-1mm]{0mm}{3.5mm}}\hbox{Image}&\multicolumn{5}{c}{Pavia University}        &\multicolumn{5}{c}{Indian Pines} \\
\cmidrule(r){2-6}\cmidrule(r){7-11}
{\rule[-1mm]{0mm}{3.5mm}}Method&PSNR&SSIM&ERGAS&SAM&UIQI&PSNR&SSIM&ERGAS&SAM&UIQI\\
\hline
{\rule[-1mm]{0mm}{3.5mm}}Best Values &$+\infty$ &1 &0 &0 &1  &$+\infty$ &1 &0 &0 &1  \\
{\rule[-1mm]{0mm}{3.5mm}}CSTF  &34.633 &0.895&2.785&5.594 &0.951    &30.552 &0.840 &2.296 &4.387 &0.926 \\
{\rule[-1mm]{0mm}{3.5mm}}UTV  &35.448 &0.912 &2.543 &4.957 &0.960     &30.644  &0.849 &2.276 &4.365  &0.924          \\
{\rule[-1mm]{0mm}{3.5mm}}CTRF &35.111 &0.915 &2.645 &5.152 &0.959  &30.542 &0.852 &2.285 &4.346  &0.921     \\
{\rule[-1mm]{0mm}{3.5mm}}FSTRD&35.707 &\textbf{0.940} &2.414 &\textbf{3.896} &\textbf{0.968}   &31.070 &0.866 &2.195 &4.226 &0.925       \\
{\rule[-1mm]{0mm}{3.5mm}}LogLRTR&\underline{36.366} &\underline{0.931} &\textbf{2.253} &4.251 &\textbf{0.968}  &31.457 &0.873 &2.125 &4.098&0.931     \\
{\rule[-1mm]{0mm}{3.5mm}}{CLoRF}  &{36.302} &{0.927}&{\underline{2.309}}&{\underline{4.243}} &{0.965}    &{\underline{31.626}} &{\underline{0.878}} &{\underline{2.062}} &{\underline{3.968}} &{\underline{0.937}} \\
{\rule[-1mm]{0mm}{3.5mm}}JLRST  &\textbf{36.526} &0.925 &2.311 &4.255&\underline{0.966} & \textbf{32.005}&\textbf{0.889}&\textbf{2.016}  &\textbf{3.869}  &\textbf{0.940}
\\
\hline
{\rule[-1mm]{0mm}{3.5mm}}\hbox{Image} &\multicolumn{5}{c}{Balloons}           &\multicolumn{5}{c}{ University of Houston}             \\
\cmidrule(r){2-6}\cmidrule(r){7-11}
{\rule[-1mm]{0mm}{3.5mm}}Method  &PSNR  &SSIM  &ERGAS  &SAM   &UIQI               &PSNR  &SSIM  &ERGAS  &SAM &UIQI\\
\hline
{\rule[-1mm]{0mm}{3.5mm}}Best Values &$+\infty$ &1     &0      &0      &1                  &$+\infty$ &1        &0        &0         &1      \\
{\rule[-1mm]{0mm}{3.5mm}}CSTF  &41.718&0.968&1.438&5.412&0.862   &38.189 &0.946 &1.610 &2.946 &0.978 \\
{\rule[-1mm]{0mm}{3.5mm}}UTV    &41.972&0.964&1.406&5.681&0.850    &38.411 &0.948 &1.580 &2.980&0.979 \\
{\rule[-1mm]{0mm}{3.5mm}}CTRF  &41.262&0.959&1.545&6.153&0.835        &38.353 &0.944 &1.575 &2.915 &0.978  \\
{\rule[-1mm]{0mm}{3.5mm}}FSTRD  &43.477&0.978&1.195 &4.820  &0.887    &38.848&0.952 &1.497&2.709  &0.981 \\
{\rule[-1mm]{0mm}{3.5mm}}LogLRTR   &44.176 &0.981 &1.094 &4.479  &0.896&39.112 &0.956&1.452 &2.524 &0.982  \\
{\rule[-1mm]{0mm}{3.5mm}}{CLoRF}  &{\underline{44.359}} &{\underline{0.987}}&{\underline{1.064}}&{\underline{3.825}} &{\underline{0.910}}    &{\underline{39.389}} &{\textbf{0.965}} &{\underline{1.419}} &{\textbf{2.198}} &{\textbf{0.985}} \\
{\rule[-1mm]{0mm}{3.5mm}}JLRST&\textbf{44.778} &\textbf{0.988} &\textbf{1.010}&\textbf{3.639}&\textbf{0.916}  &\textbf{39.508} &\underline{0.962} &\textbf{1.374} &\underline{2.237} &\underline{0.983}\\ \hline
\label{table2}
\end{tabular}}\vspace{-0.5em}
\end{table*}

The quantitative evaluation results of seven fusion approaches across four datasets are systematically summarized in Table~\ref{table2}. To facilitate comparison, the optimal performance metric values are highlighted in bold, while the suboptimal results are distinctly marked with underlining. This table reveals that our proposed method achieves the best results on all evaluation metrics for ``Indian Pines", and ``Balloons".
It is noteworthy that the JLRST gains PSNR improvements of 0.379 and 0.419 over the suboptimal method on the ``Indian Pines" and ``Balloons" images, respectively.

As a supplement to the table data, Fig.~\ref{figure2} illustrates the correlation curves between the spectral band and two critical quality metrics (PSNR and UIQI) across all assessed methods.
It demonstrates that JLRST acquires superior PSNR and UIQI values across nearly the entire spectral range, with particularly significant advantages observed for both the ``Indian Pines"  and ``Balloons" images.

\begin{figure*}
\centering
\subfigure[]{
    \begin{minipage}[b]{0.225\linewidth}
    \includegraphics[width=4cm]{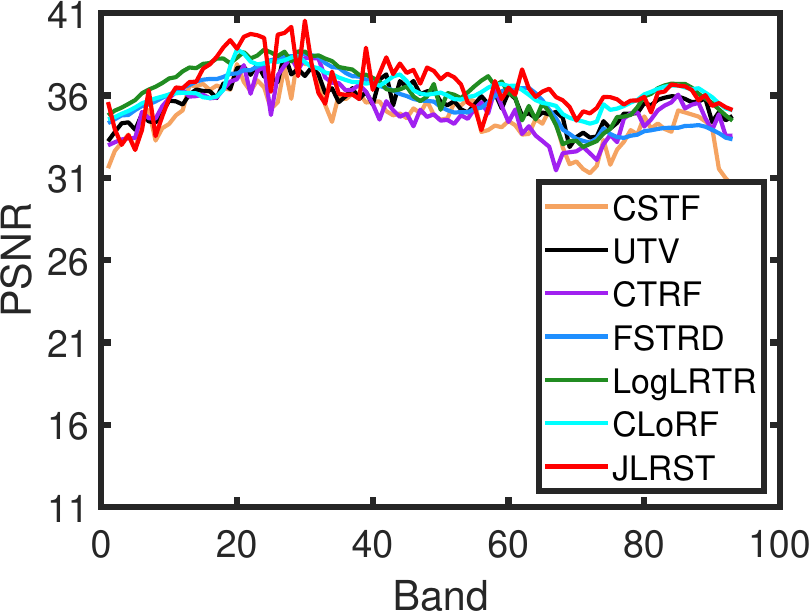}\vspace{1pt}
    \includegraphics[width=4cm]{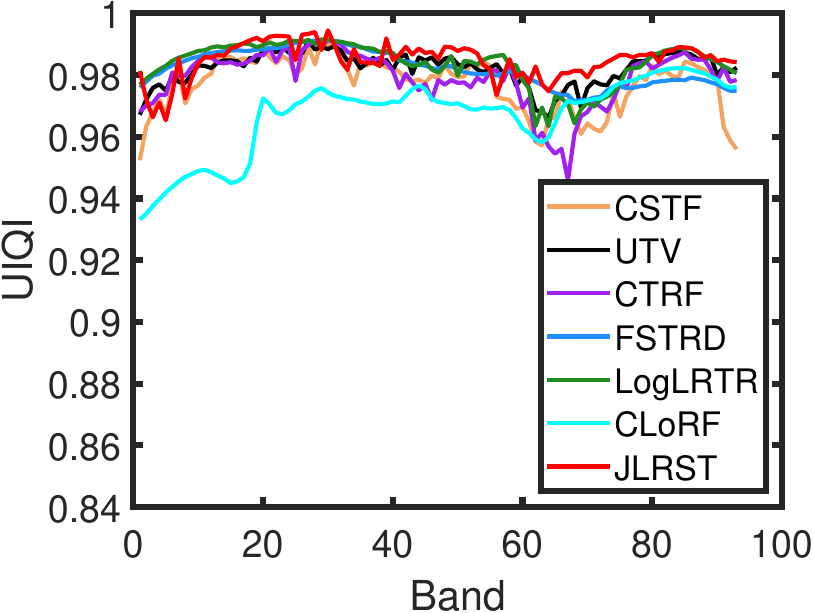}\vspace{1pt}
    \end{minipage}
}
\subfigure[]{
    \begin{minipage}[b]{0.225\linewidth}
    \includegraphics[width=4cm]{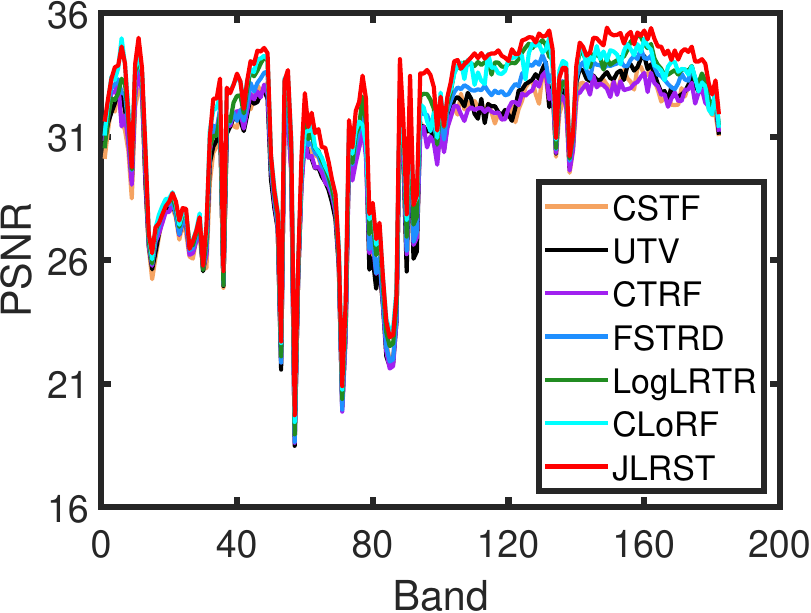}\vspace{1pt}
    \includegraphics[width=4cm]{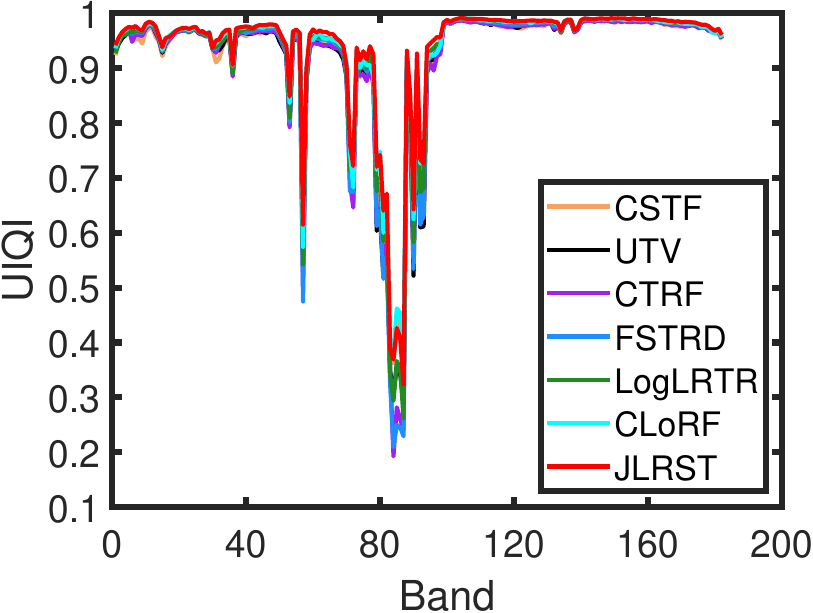}\vspace{1pt}
    \end{minipage}
}
\subfigure[]{
    \begin{minipage}[b]{0.225\linewidth}
    \includegraphics[width=4cm]{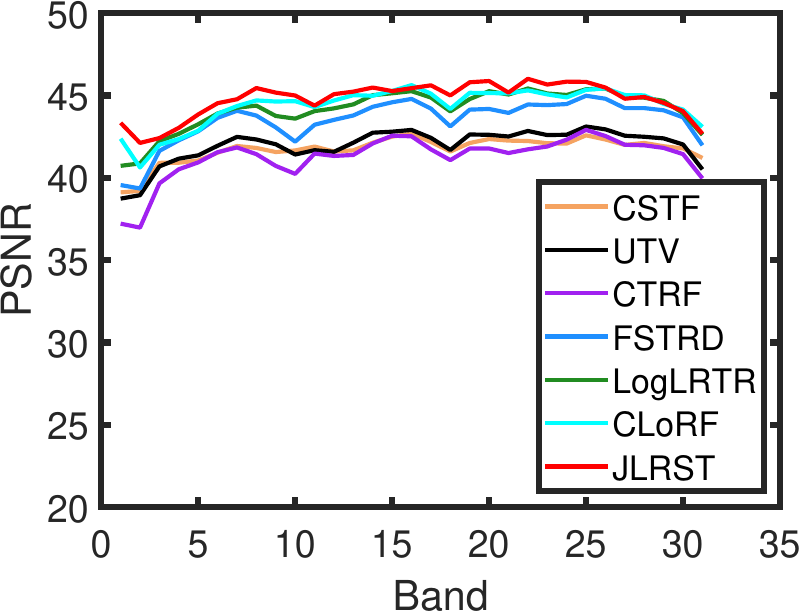}\vspace{1pt}
    \includegraphics[width=4cm]{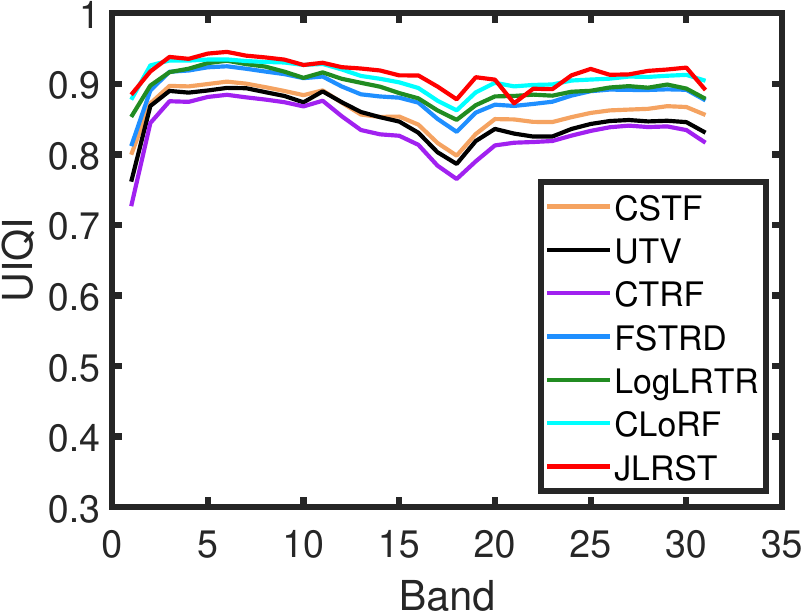}\vspace{1pt}
    \end{minipage}
}
\subfigure[]{
    \begin{minipage}[b]{0.225\linewidth}
    \includegraphics[width=4cm]{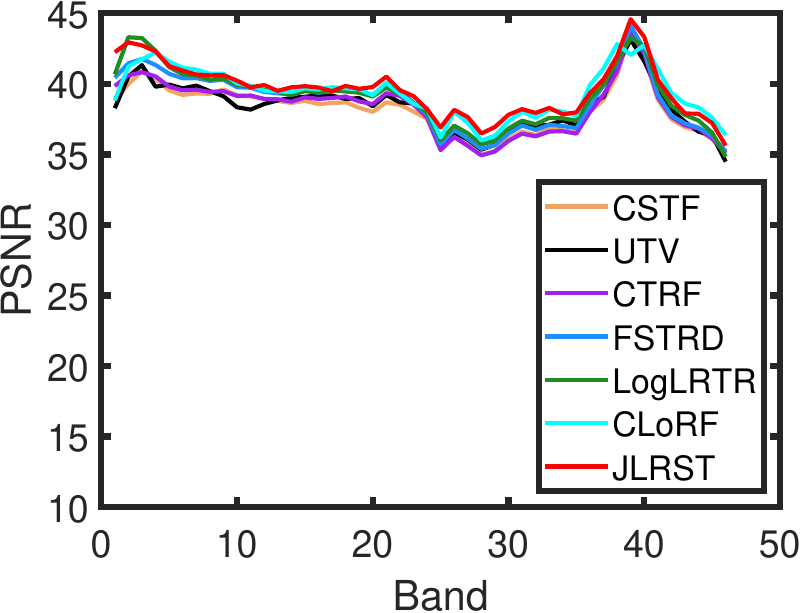 }\vspace{1pt}
    \includegraphics[width=4cm]{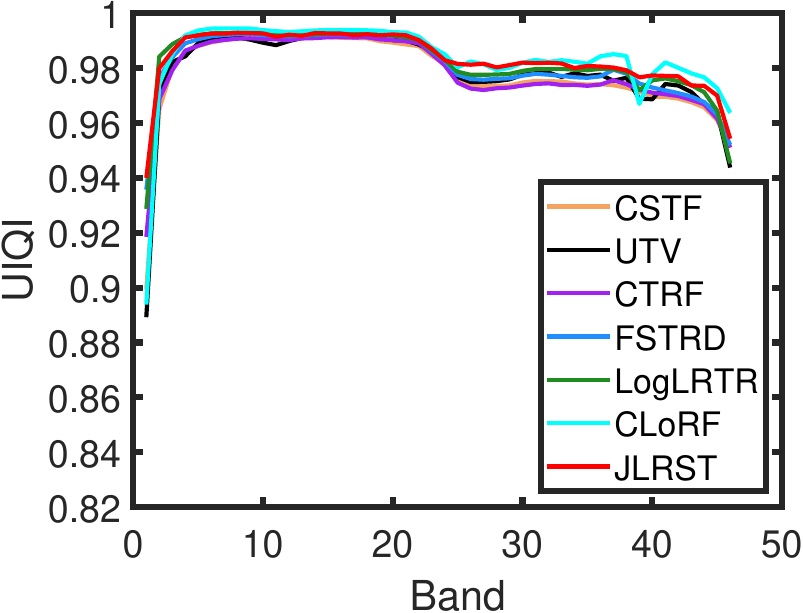 }\vspace{1pt}
    \end{minipage}
}
\caption{The plots of PSNR (top) and UIQI (bottom) values versus the spectral band for four test images. (a) Pavia University, (b) Indian Pines, (c) Balloons, (d) University of Houston.}
\label{figure2}
\end{figure*}

To visually compare the fusion effects of different approaches, we present the error images of a specific band generated by various fusion methods for each test image, along with pseudo-color images fused from three bands.
The error image directly illustrates the deviation between the fused image and the ground truth. The more blue color appears in the error image, the closer it aligns with the ground truth, indicating a superior fusion quality. Moreover, we extract and enlarge the key subregion in each pseudo-color fusion image to enhance visual comparison.

Fig.~\ref{figure3} presents the pseudo-color images of ``Pavia University" generated using bands 50, 66, and 69 through different fusion methods. In addition, the error images for band 30 are also included. From the enlarged subregions of the pseudo-color images, CTRF and LogLRTR exhibit substantial artifacts. Although there is no significant difference between FSTRD and our approach in the enlarged subregion, the error images reveal that the JLRST yields smaller error, and its fusion result is closer to the ground truth.

\begin{figure*}
\centering
\subfigure[]{
    \begin{minipage}[b]{0.1\linewidth}
    \includegraphics[width=2cm]{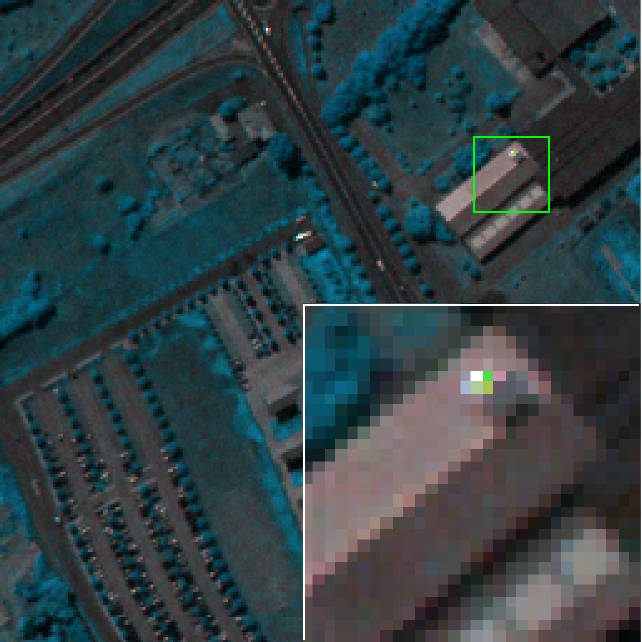}\vspace{1pt}
    \includegraphics[width=2cm]{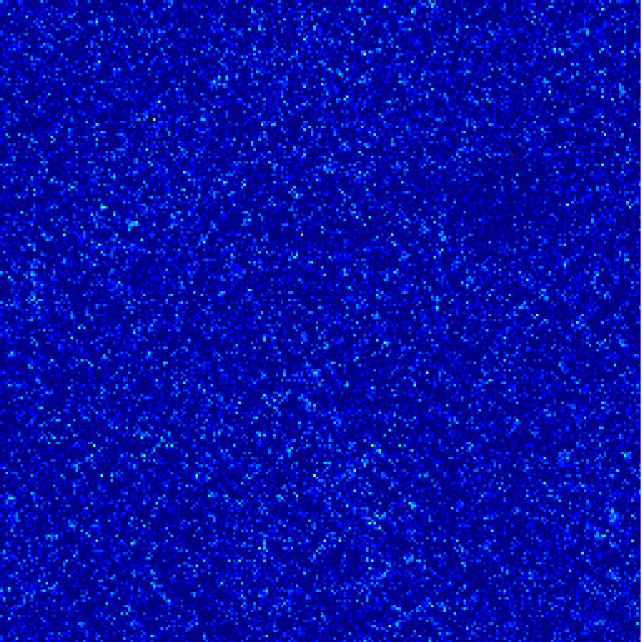}\vspace{1pt}
    \end{minipage}
}
\subfigure[]{
    \begin{minipage}[b]{0.1\linewidth}
    \includegraphics[width=2cm]{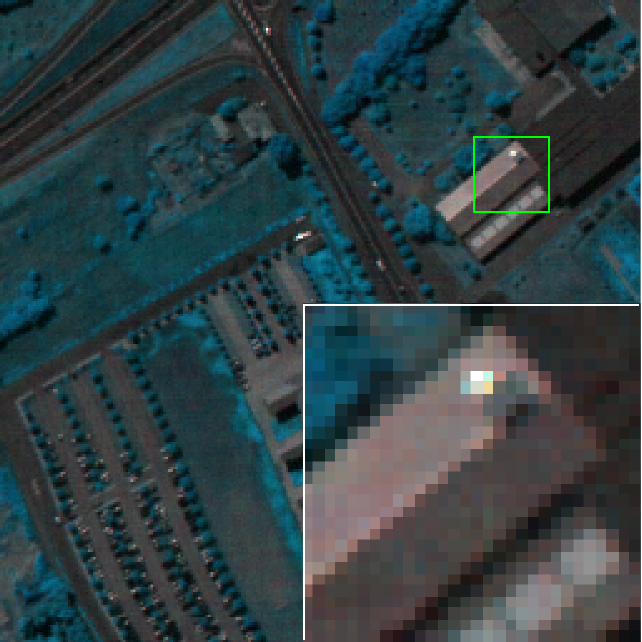}\vspace{1pt}
    \includegraphics[width=2cm]{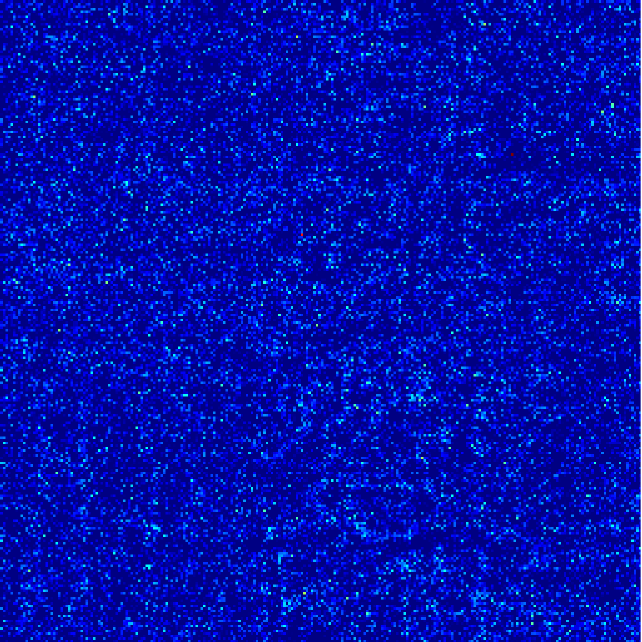}\vspace{1pt}
    \end{minipage}
}
\subfigure[]{
    \begin{minipage}[b]{0.1\linewidth}
    \includegraphics[width=2cm]{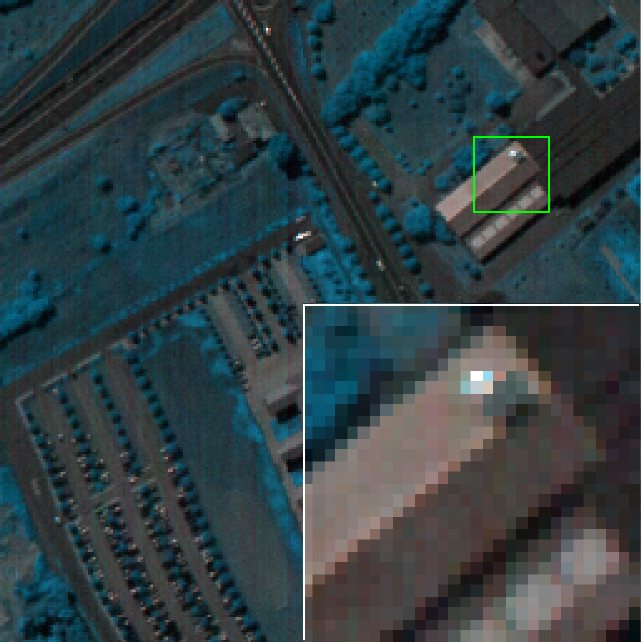}\vspace{1pt}
    \includegraphics[width=2cm]{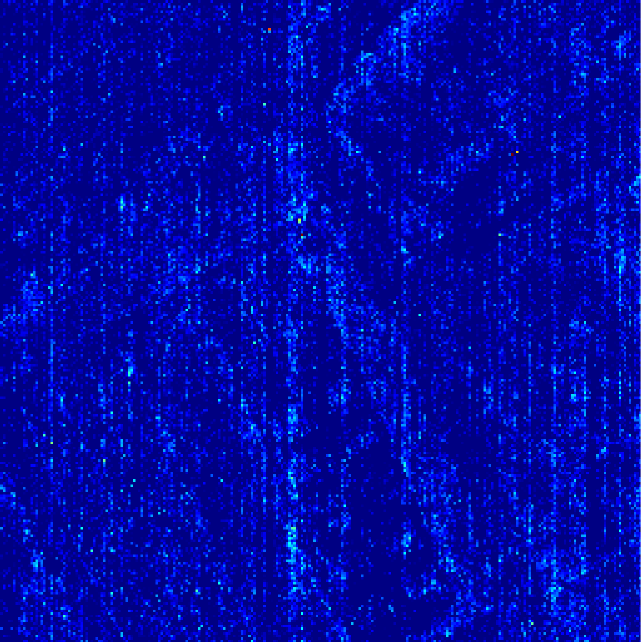}\vspace{1pt}
    \end{minipage}
}
\subfigure[]{
    \begin{minipage}[b]{0.1\linewidth}
    \includegraphics[width=2cm]{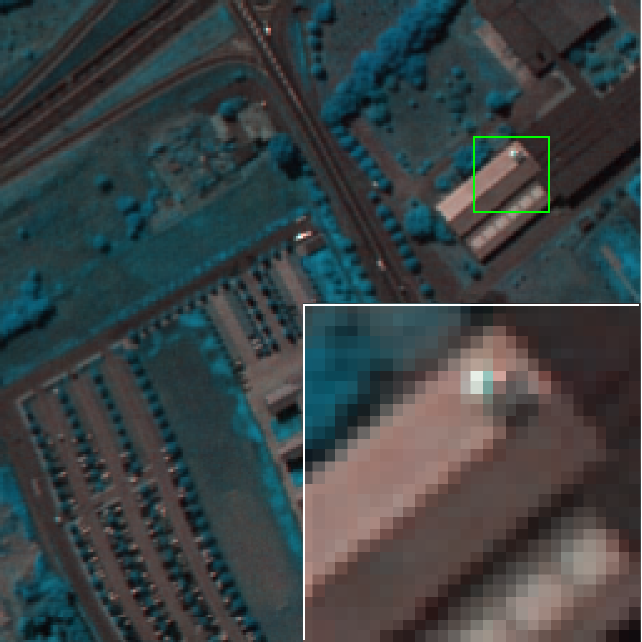}\vspace{1pt}
    \includegraphics[width=2cm]{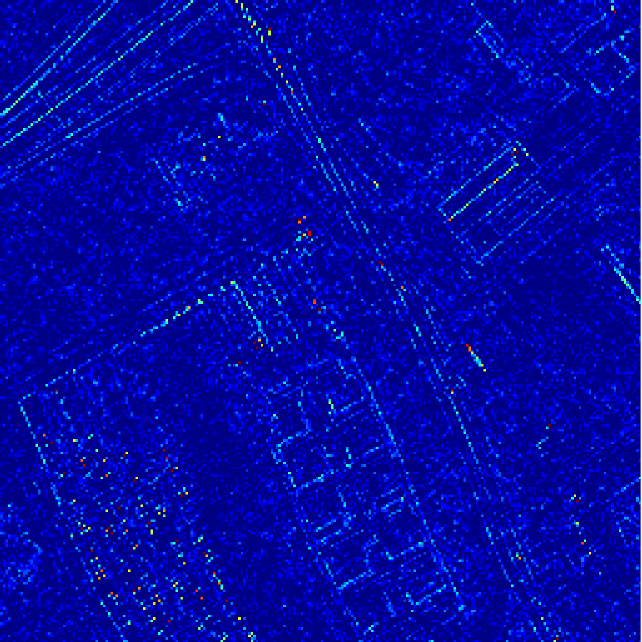}\vspace{1pt}
    \end{minipage}
}
\subfigure[]{
    \begin{minipage}[b]{0.1\linewidth}
    \includegraphics[width=2cm]{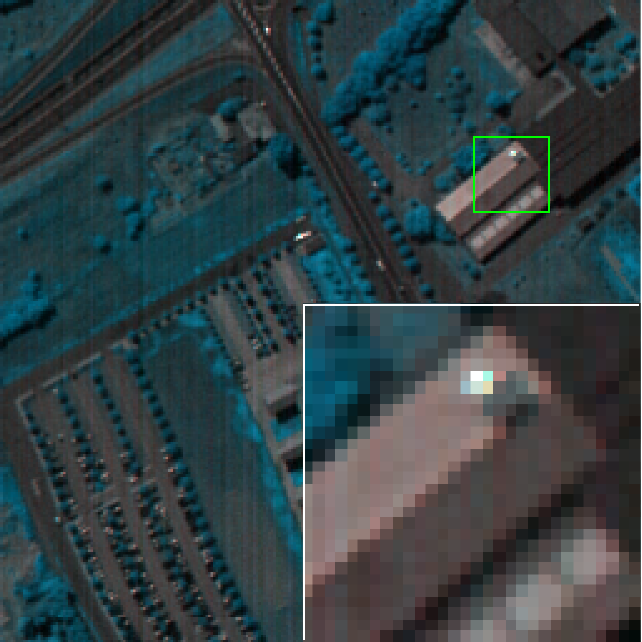}\vspace{1pt}
    \includegraphics[width=2cm]{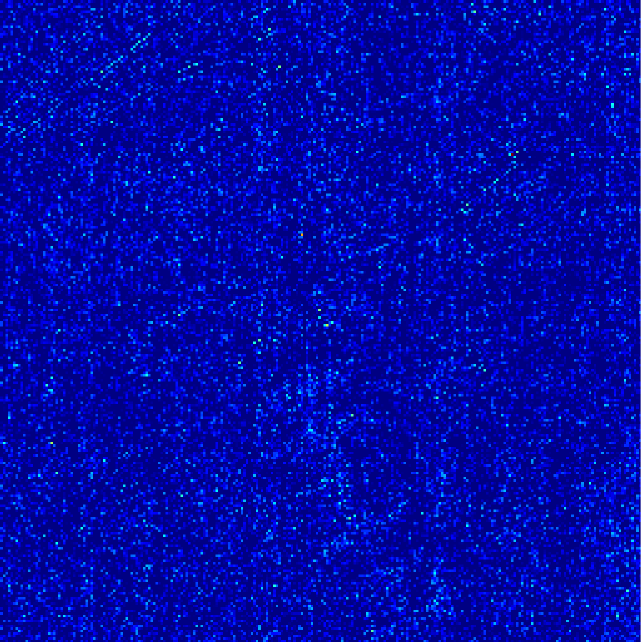}\vspace{1pt}
    \end{minipage}
}
\subfigure[]{
    \begin{minipage}[b]{0.1\linewidth}
    \includegraphics[width=2cm]{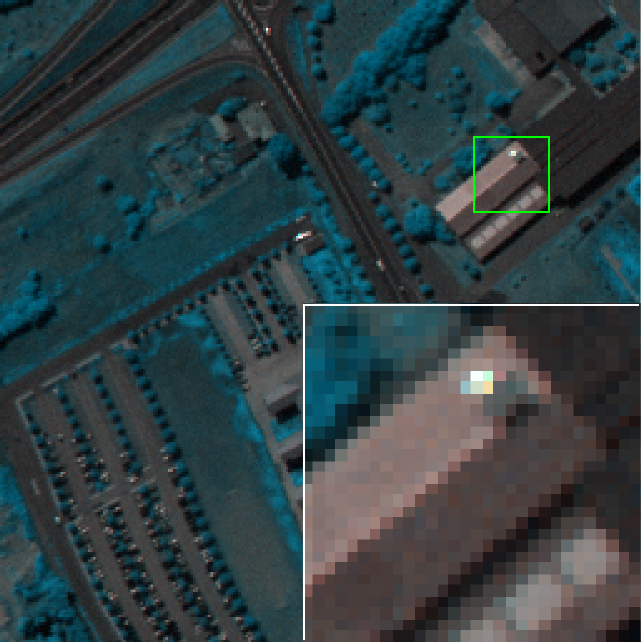}\vspace{1pt}
    \includegraphics[width=2cm]{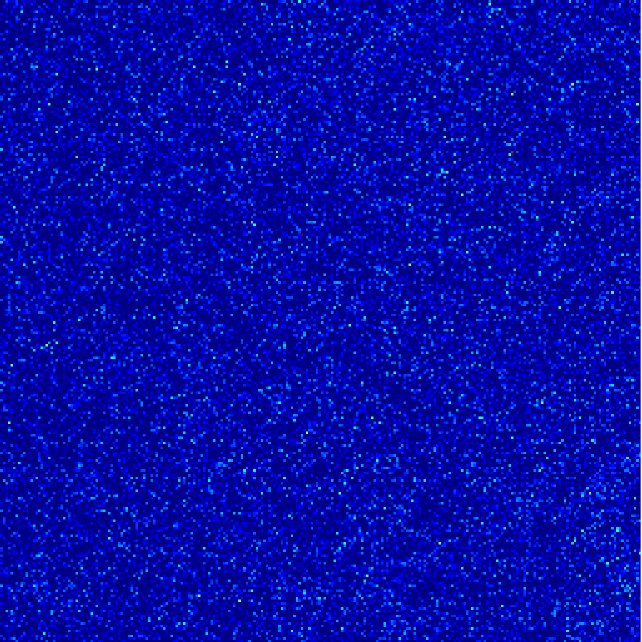}\vspace{1pt}
    \end{minipage}
}
\subfigure[]{
    \begin{minipage}[b]{0.1\linewidth}
    \includegraphics[width=2cm]{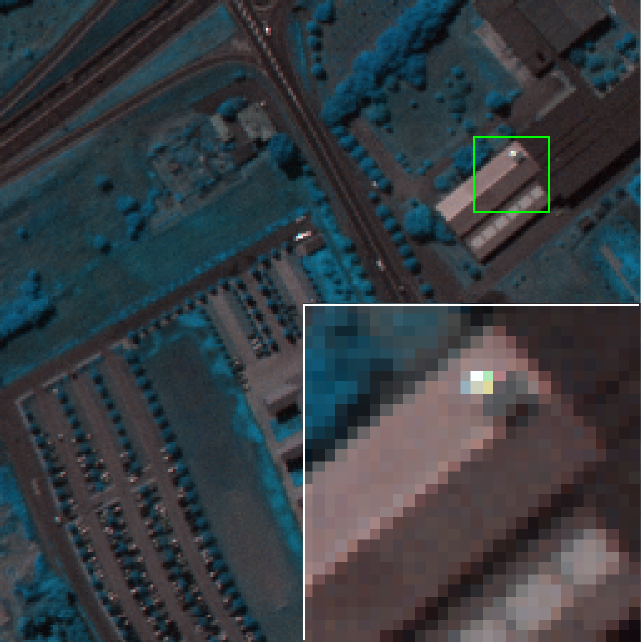}\vspace{1pt}
    \includegraphics[width=2cm]{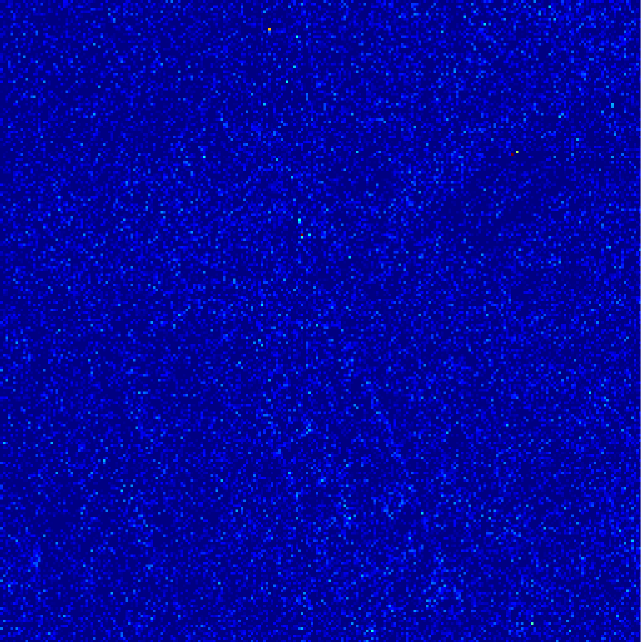}\vspace{1pt}
    \end{minipage}
}
\subfigure[]{
    \begin{minipage}[b]{0.1\linewidth}
    \includegraphics[width=2cm]{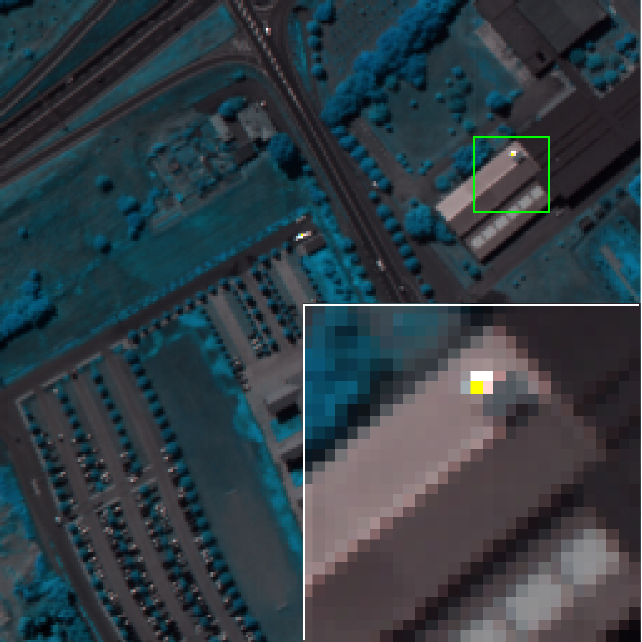}\vspace{1pt}
    \includegraphics[width=2cm]{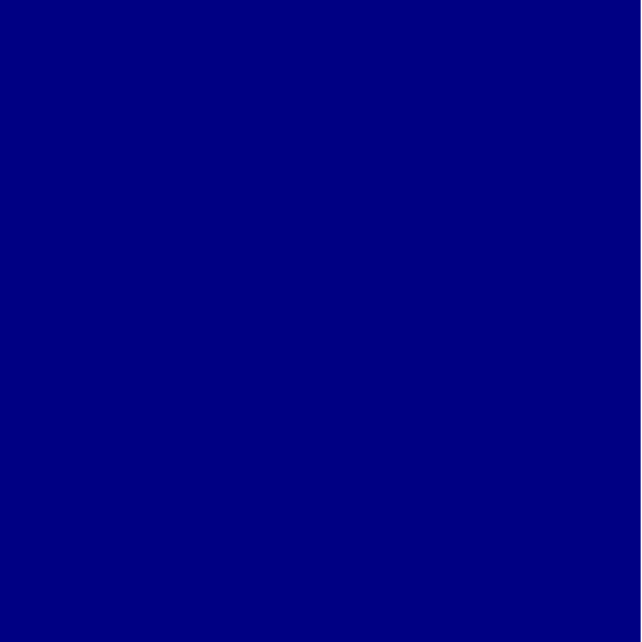}\vspace{1pt}
    \end{minipage}
}\vspace{-0.6em}
    \includegraphics[width=12cm]{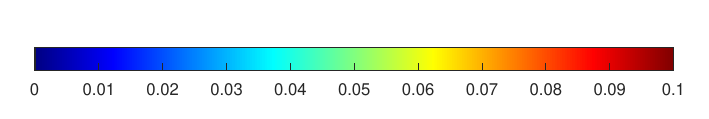}\vspace{1pt}\vspace{-1em}
\caption{The first row represents the false color images of ``Pavia University" generated by bands (R: 50, G: 66, and B: 69), and the second row displays the error images of the 30th band. (a) CSTF, (b) UTV, (c) CTRF, (d) FSTRD, (e) LogLRTR, (f) CLoRF, (g) JLRST, (h) ground truth.}
\label{figure3}
\end{figure*}

Fig.~\ref{figure4} displays pseudo-color images obtained from the 60th, 82nd, and 92nd bands of the ``Indian Pines", as well as error images corresponding to the 149th band. As shown in Fig.~\ref{figure4}, UTV and CTRF have poor performances, and the fusion results of FSTRD and LogLRTR appear staircase effects. In addition, CLoRF exists with spectral distortion, which results in noticeable color deviations in the reconstructed images compared to the ground truth. Observing the magnified subregions of the fused images, it is evident that the reconstructed image acquired by JLRST is smoother than other methods.
Additionally, the result obtained using the JLRST demonstrates superior performance in detail preservation, and its fusion result is closer to the original image.

\begin{figure*}
\centering
\subfigure[]{
    \begin{minipage}[b]{0.1\linewidth}
    \includegraphics[width=2cm]{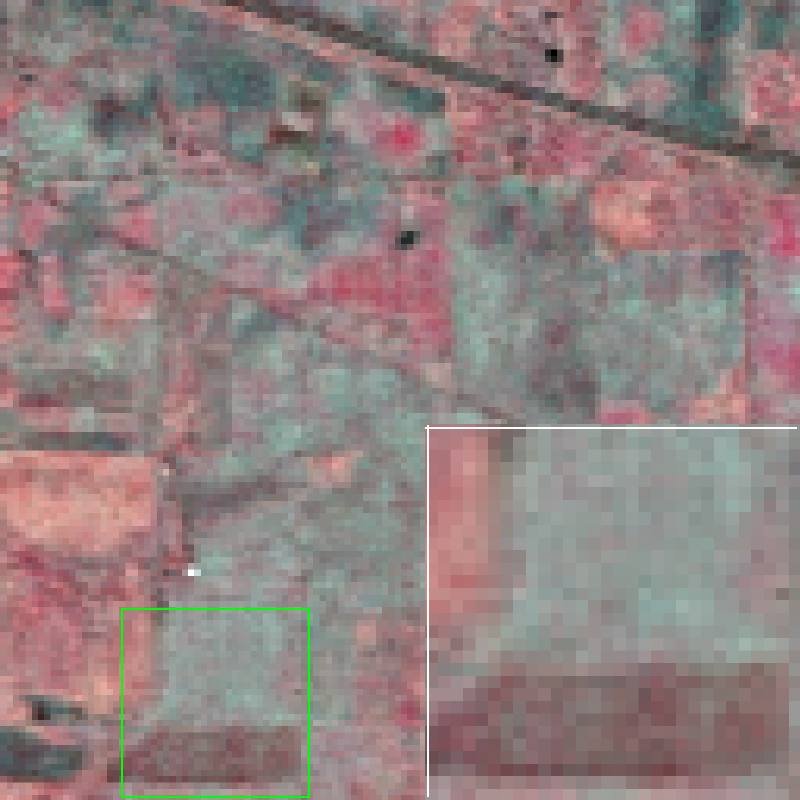}\vspace{1pt}
    \includegraphics[width=2cm]{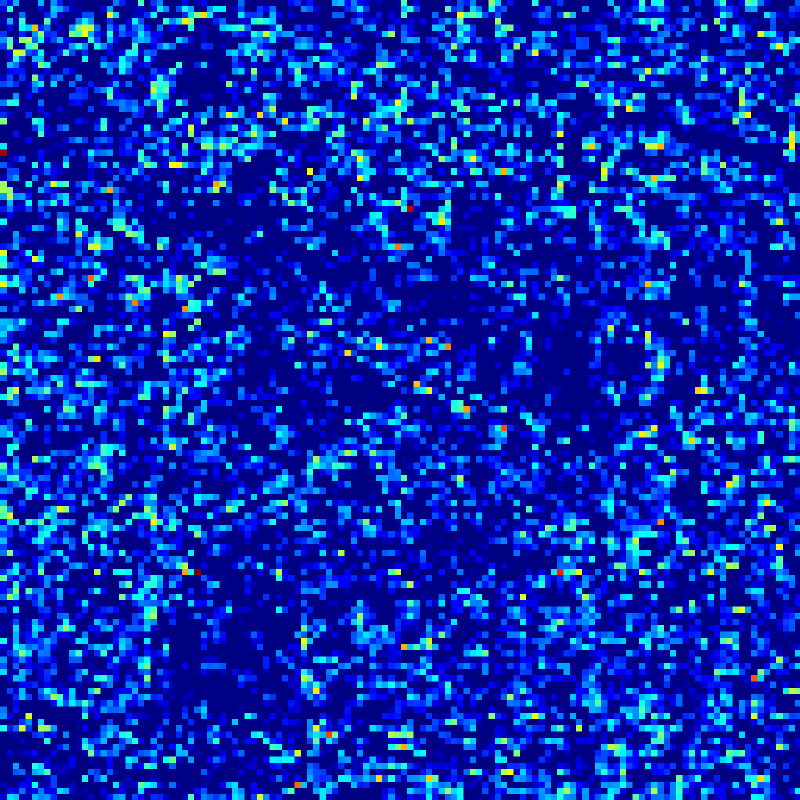}\vspace{1pt}
    \end{minipage}
}
\subfigure[]{
    \begin{minipage}[b]{0.1\linewidth}
    \includegraphics[width=2cm]{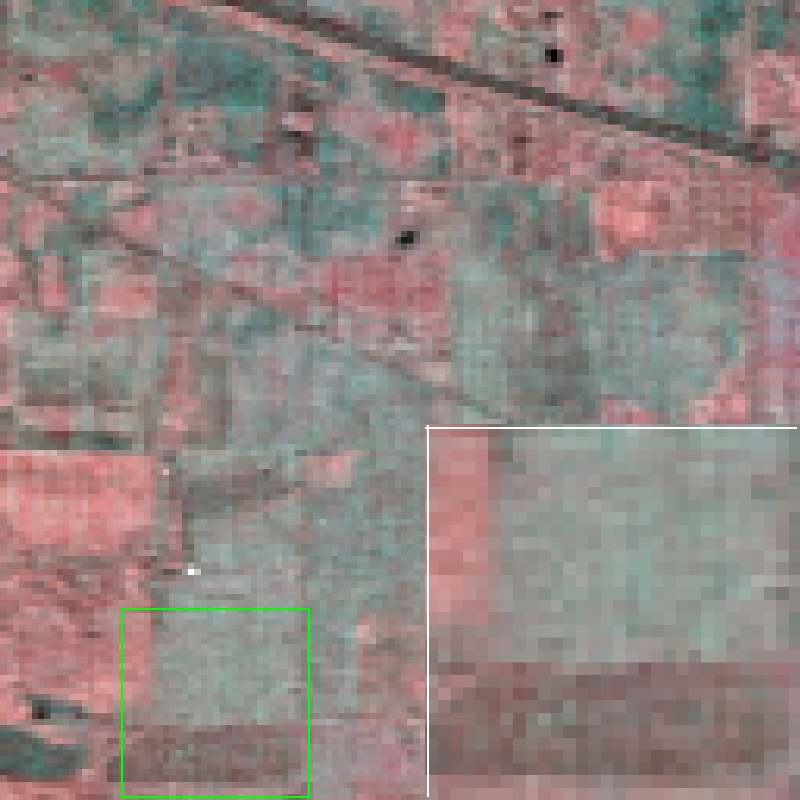}\vspace{1pt}
    \includegraphics[width=2cm]{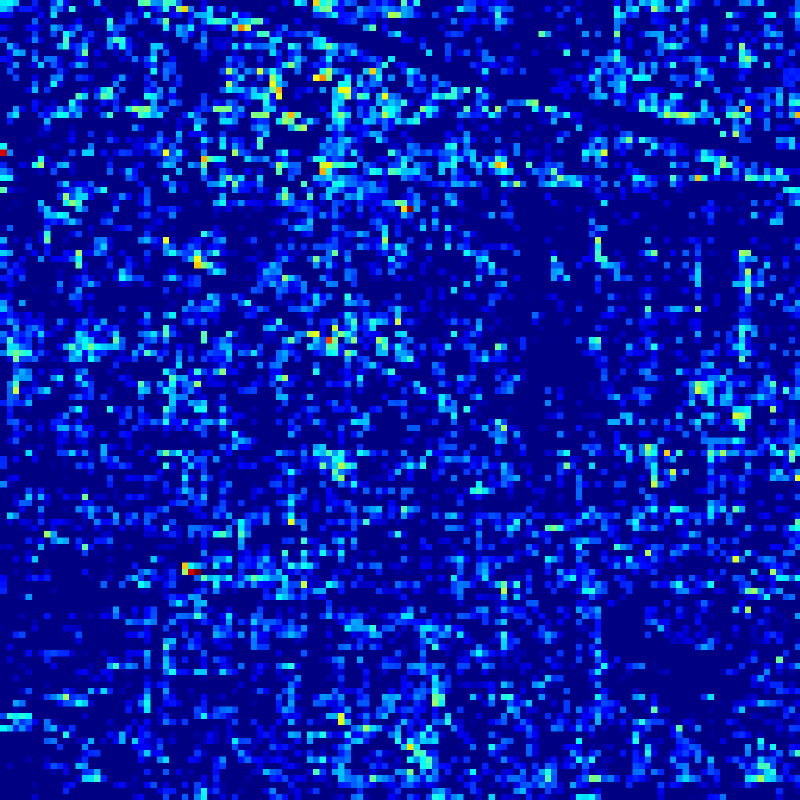}\vspace{1pt}
    \end{minipage}
}
\subfigure[]{
    \begin{minipage}[b]{0.1\linewidth}
    \includegraphics[width=2cm]{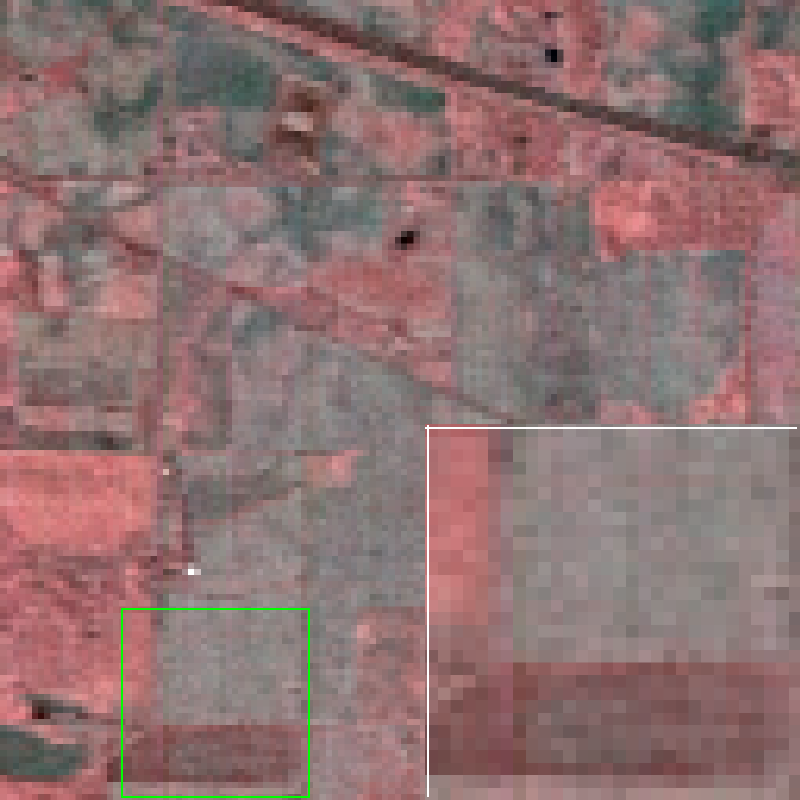}\vspace{1pt}
    \includegraphics[width=2cm]{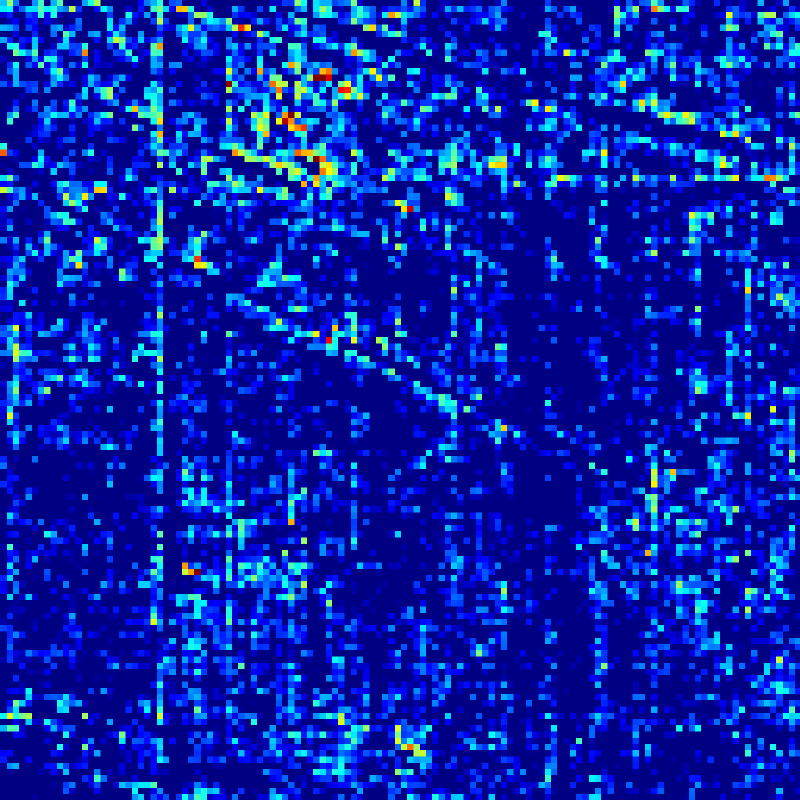}\vspace{1pt}
    \end{minipage}
}
\subfigure[]{
    \begin{minipage}[b]{0.1\linewidth}
    \includegraphics[width=2cm]{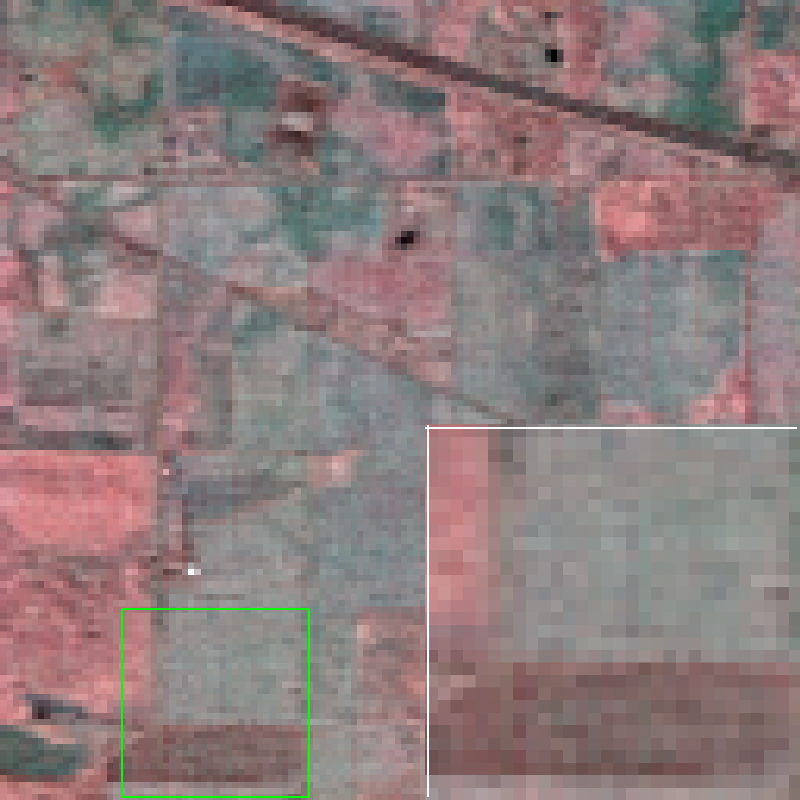}\vspace{1pt}
    \includegraphics[width=2cm]{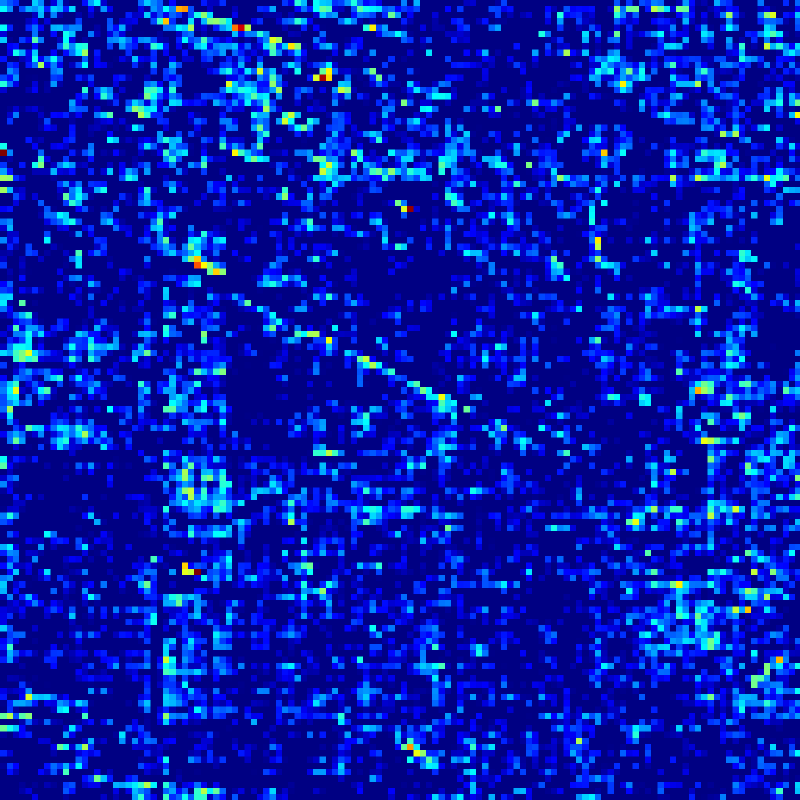}\vspace{1pt}
    \end{minipage}
}
\subfigure[]{
    \begin{minipage}[b]{0.1\linewidth}
    \includegraphics[width=2cm]{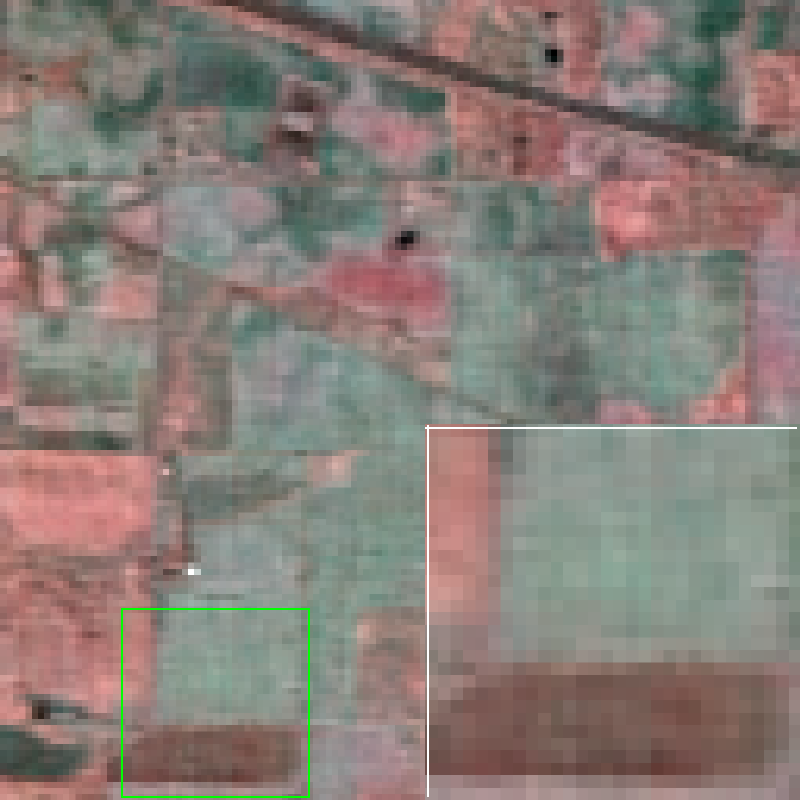}\vspace{1pt}
    \includegraphics[width=2cm]{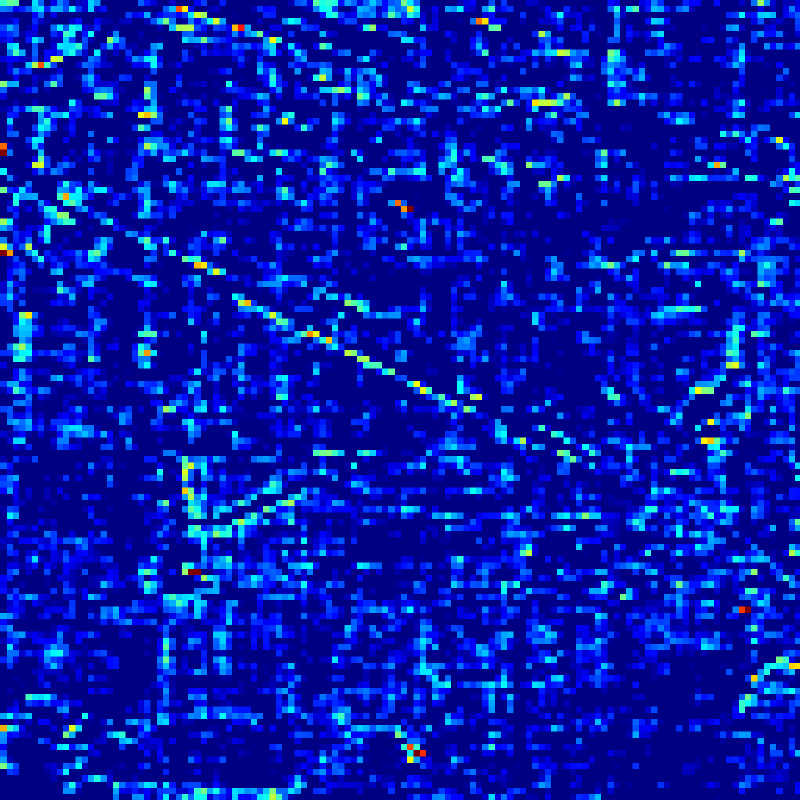}\vspace{1pt}
    \end{minipage}
}
\subfigure[]{
    \begin{minipage}[b]{0.1\linewidth}
    \includegraphics[width=2cm]{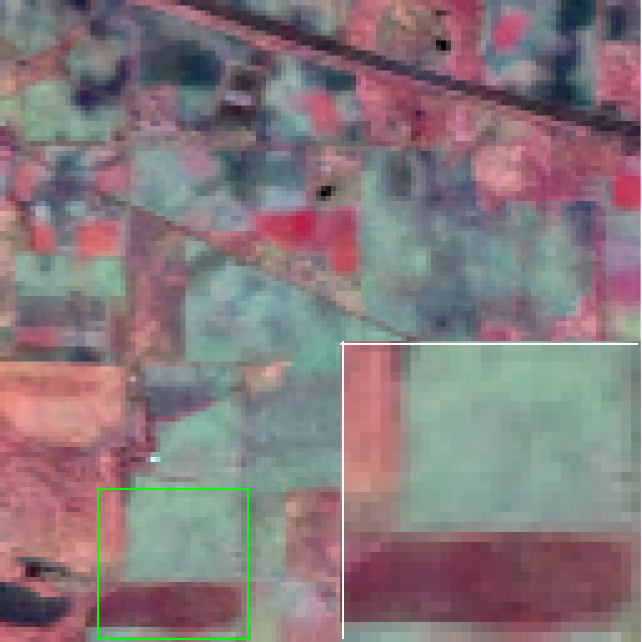}\vspace{1pt}
    \includegraphics[width=2cm]{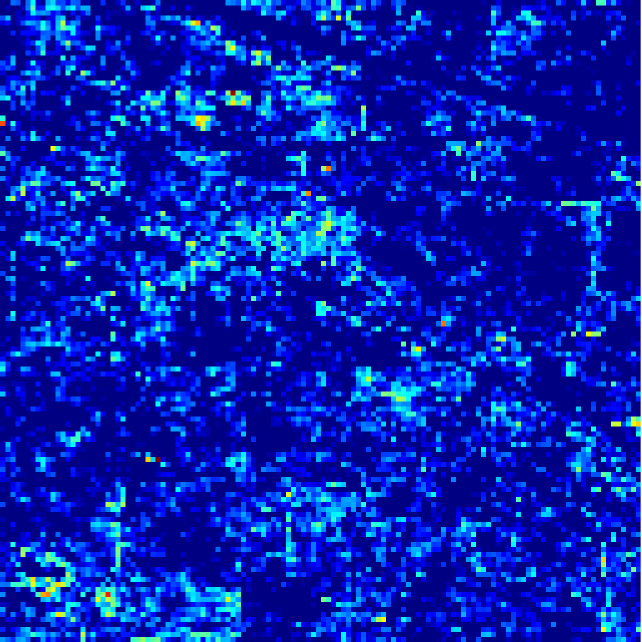}\vspace{1pt}
    \end{minipage}
}
\subfigure[]{
    \begin{minipage}[b]{0.1\linewidth}
    \includegraphics[width=2cm]{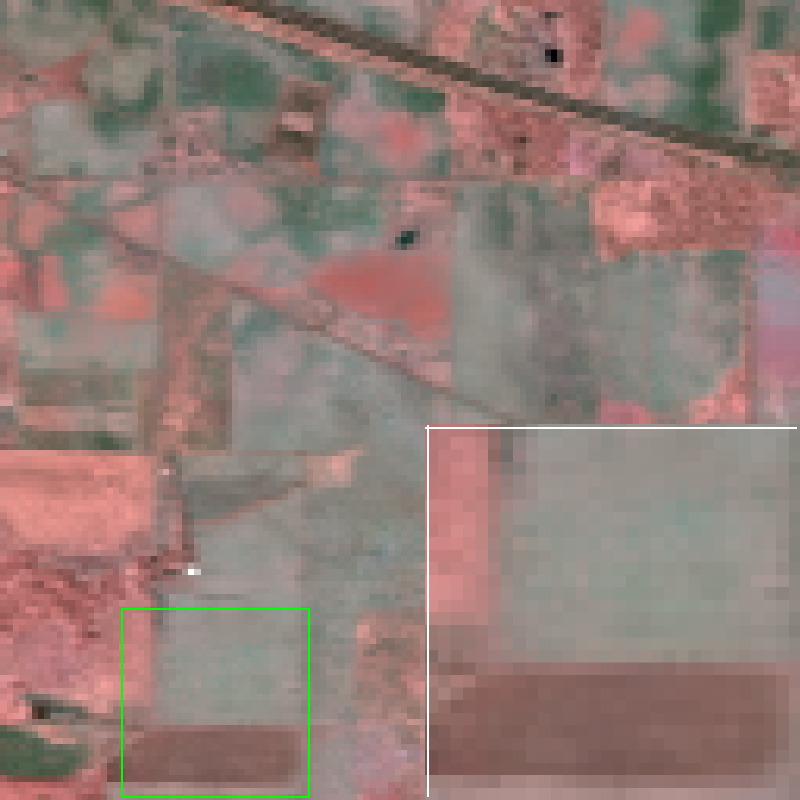}\vspace{1pt}
    \includegraphics[width=2cm]{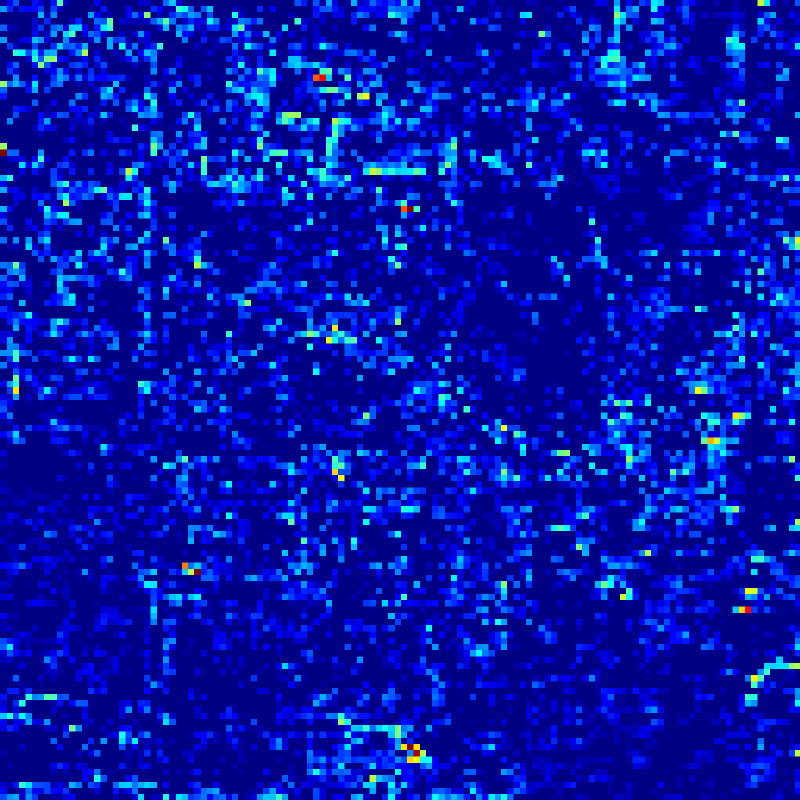}\vspace{1pt}
    \end{minipage}
}
\subfigure[]{
    \begin{minipage}[b]{0.1\linewidth}
    \includegraphics[width=2cm]{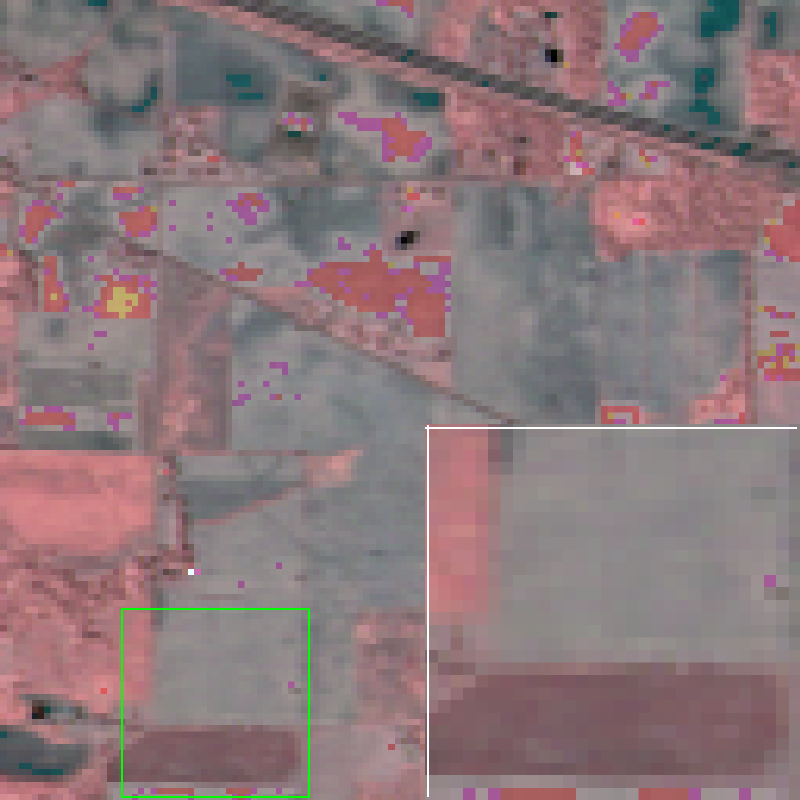}\vspace{1pt}
    \includegraphics[width=2cm]{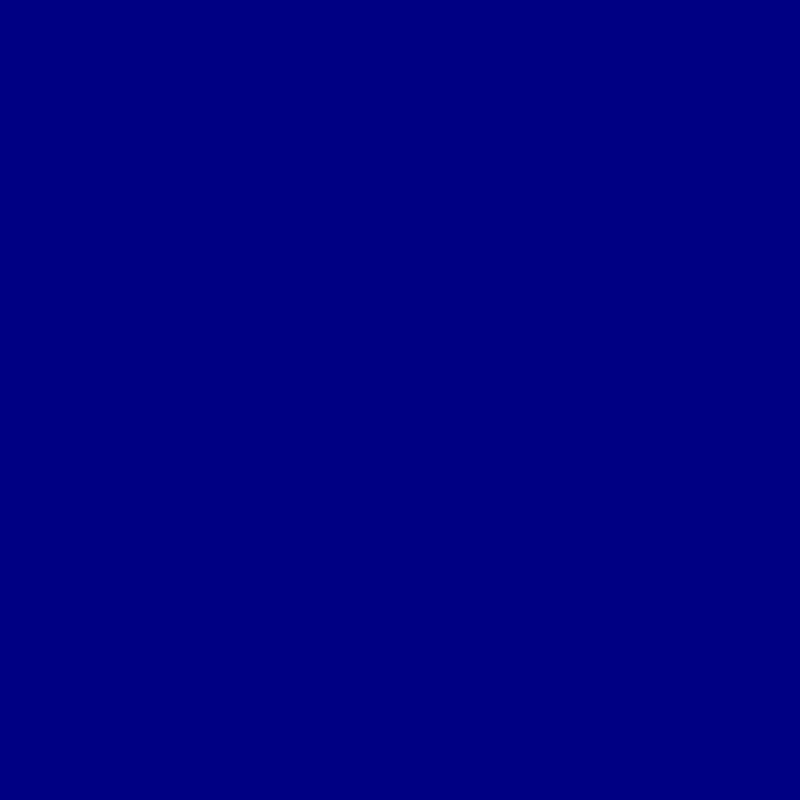}\vspace{1pt}
    \end{minipage}
}\vspace{-0.6em}
   \includegraphics[width=12cm]{Color_bar_0.1.pdf}\vspace{1pt}\vspace{-1em}
\caption{The first row shows the false color images of ``Indian Pines" generated using bands (R: 60, G: 82, and B: 92), and the second row displays the error images of the 149th band. (a) CSTF, (b) UTV, (c) CTRF, (d) FSTRD, (e) LogLRTR, (f) CLoRF, (g) JLRST, (h) ground truth.}
\label{figure4}
\end{figure*}

The pseudo-color images composed of the 16th, 1st, and 25th bands from the  ``Balloons", along with the error images corresponding to the 8th band, are presented in Fig.~\ref{figure5}. As evident from the magnified subregion in this figure, all approaches except CLoRF and JLRST exhibit artifacts in the fused images. Furthermore, JLRST produces the smallest reconstruction error among all compared methods.

\begin{figure*}
\centering
\subfigure[]{
    \begin{minipage}[b]{0.1\linewidth}
    \includegraphics[width=2cm]{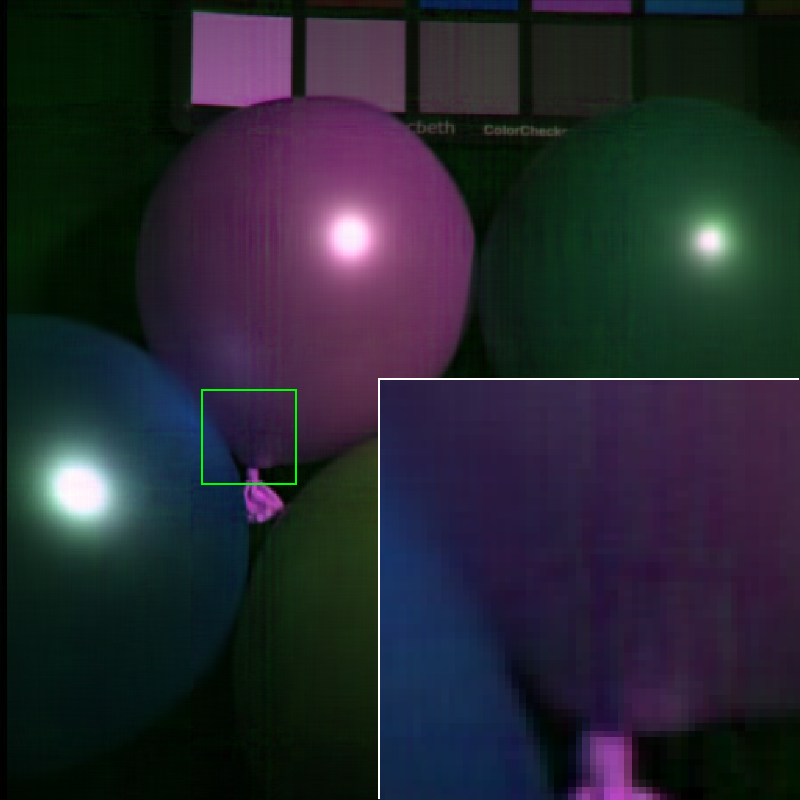}\vspace{1pt}
    \includegraphics[width=2cm]{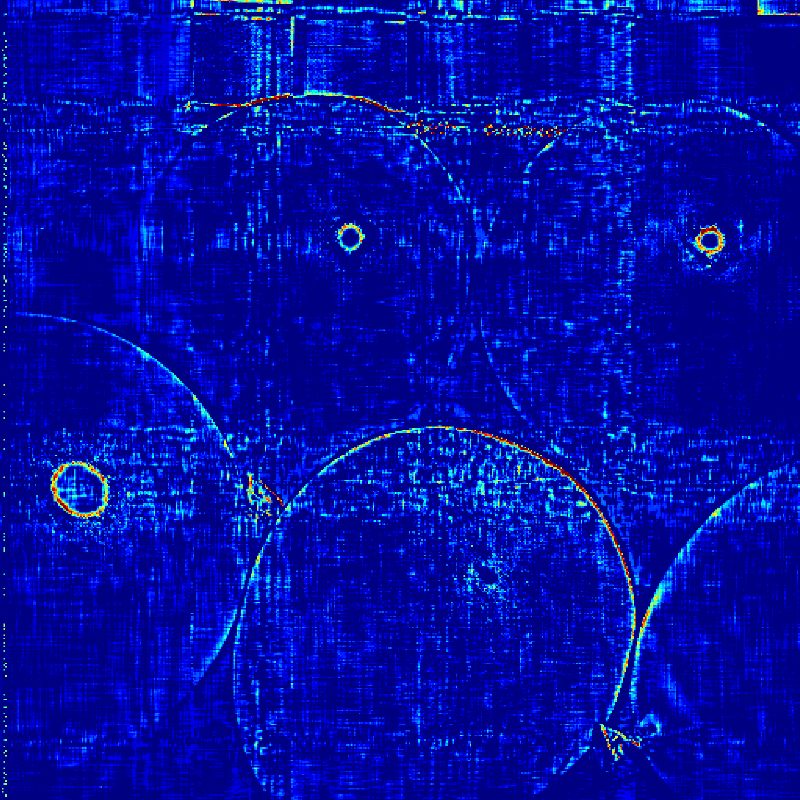}\vspace{1pt}
    \end{minipage}
}
\subfigure[]{
    \begin{minipage}[b]{0.1\linewidth}
    \includegraphics[width=2cm]{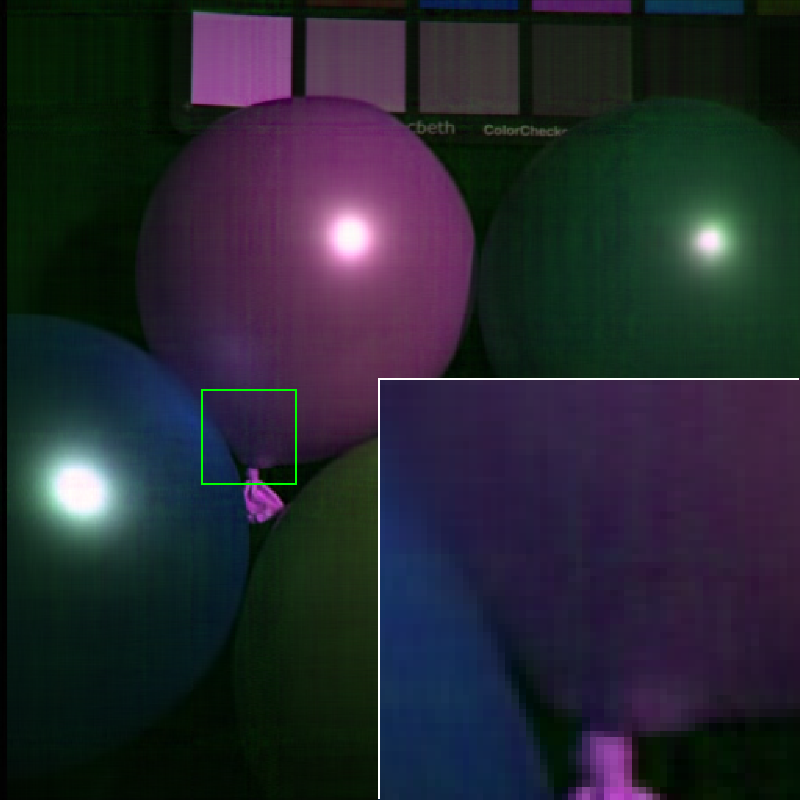}\vspace{1pt}
    \includegraphics[width=2cm]{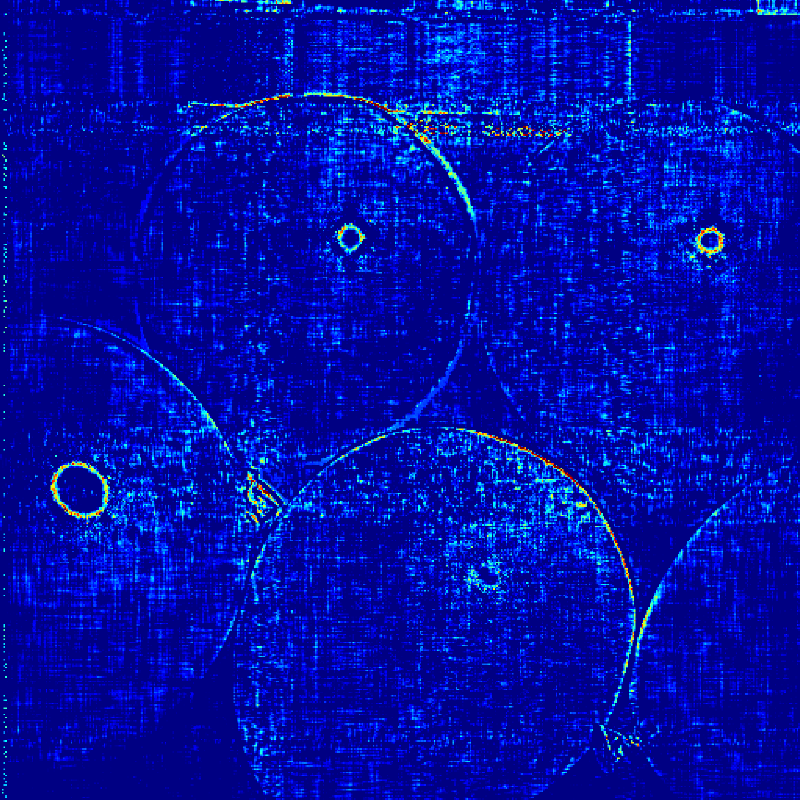}\vspace{1pt}
    \end{minipage}
}
\subfigure[]{
    \begin{minipage}[b]{0.1\linewidth}
    \includegraphics[width=2cm]{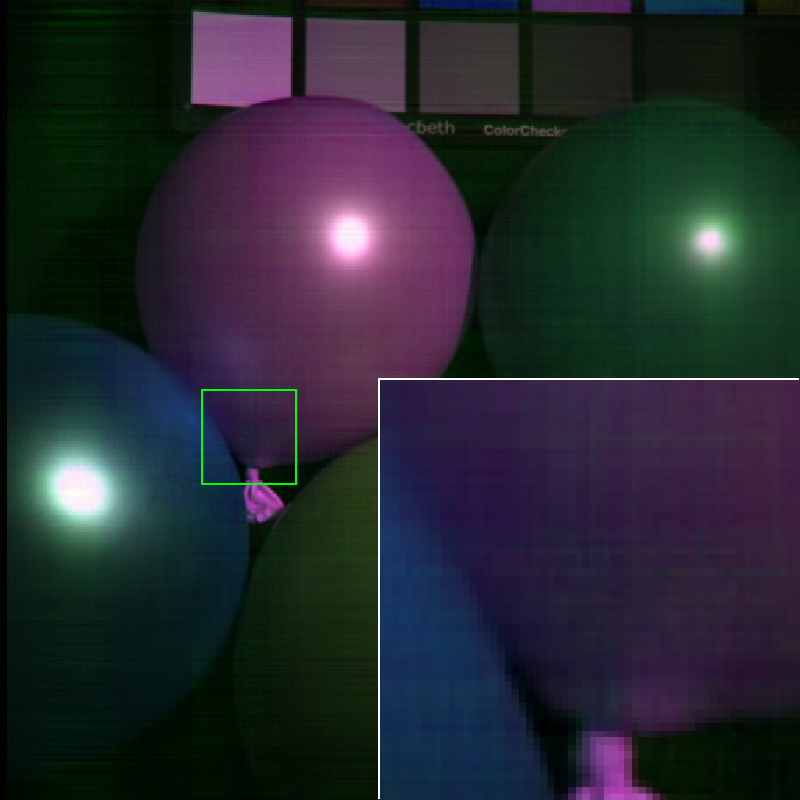}\vspace{1pt}
    \includegraphics[width=2cm]{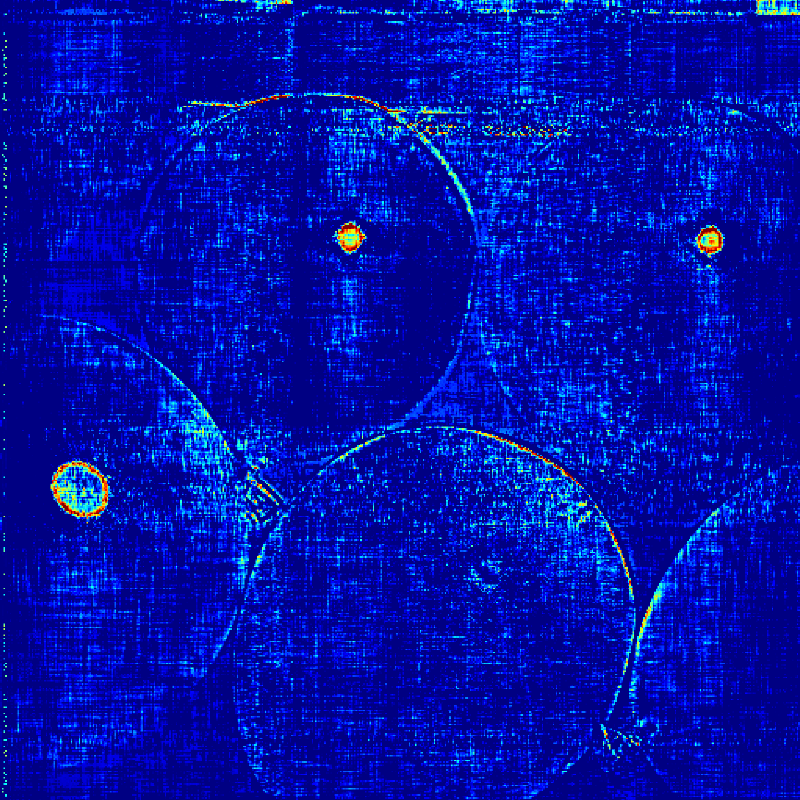}\vspace{1pt}
    \end{minipage}
}
\subfigure[]{
    \begin{minipage}[b]{0.1\linewidth}
    \includegraphics[width=2cm]{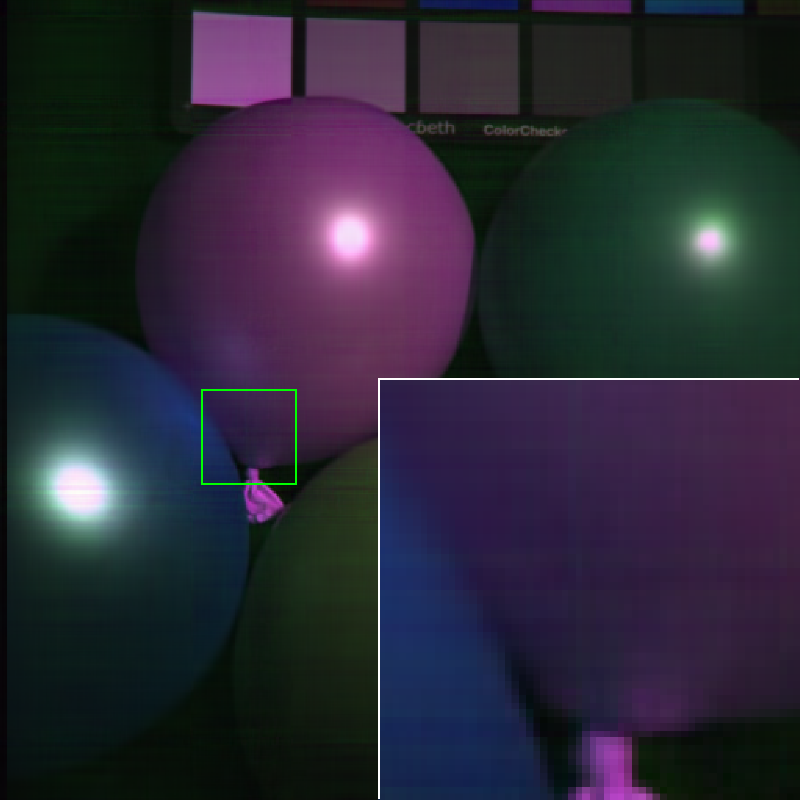}\vspace{1pt}
    \includegraphics[width=2cm]{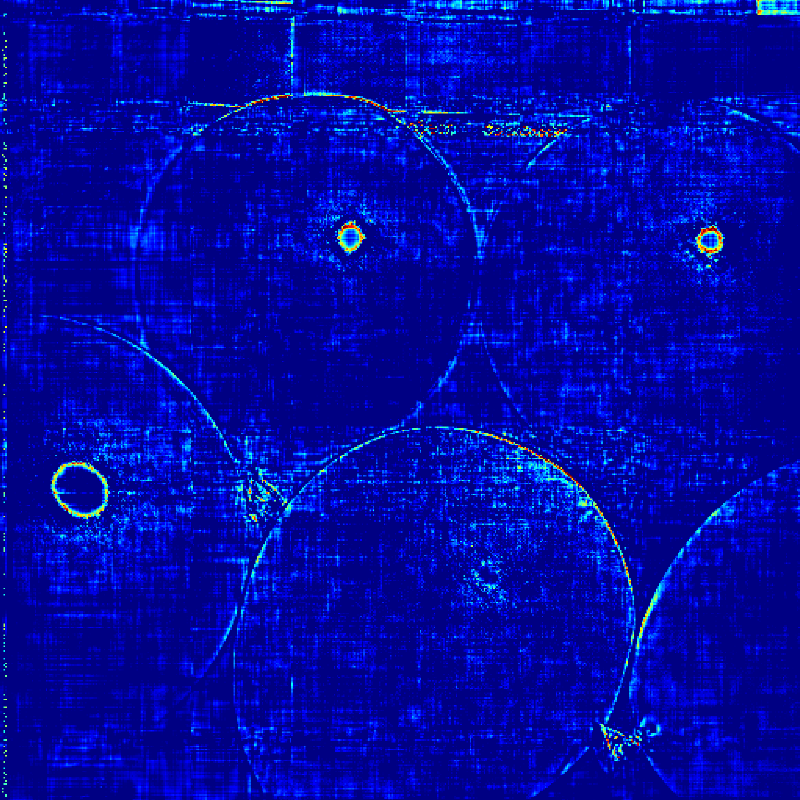}\vspace{1pt}
    \end{minipage}
}
\subfigure[]{
    \begin{minipage}[b]{0.1\linewidth}
    \includegraphics[width=2cm]{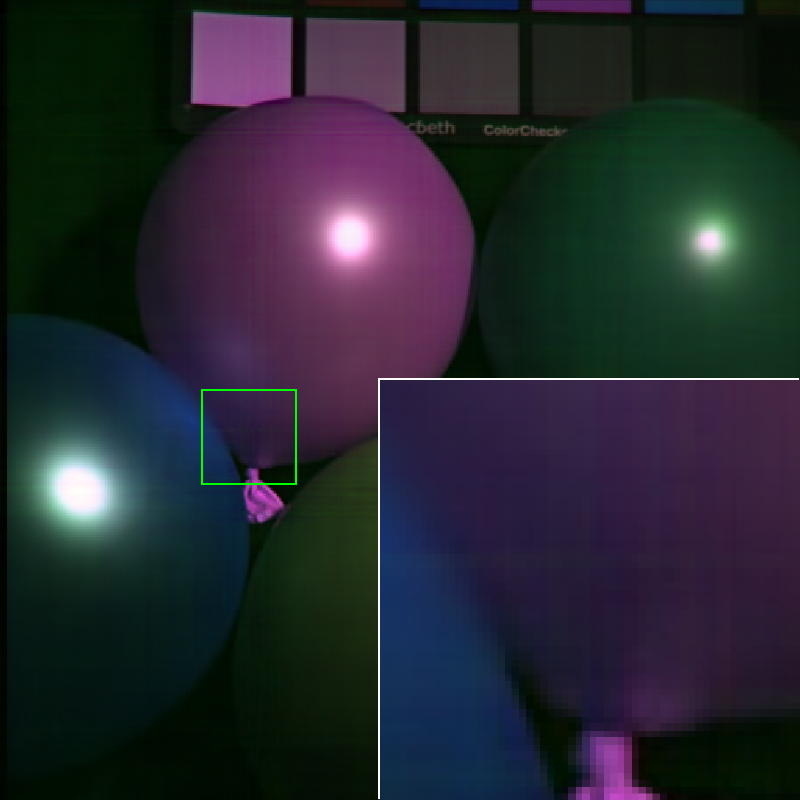}\vspace{1pt}
    \includegraphics[width=2cm]{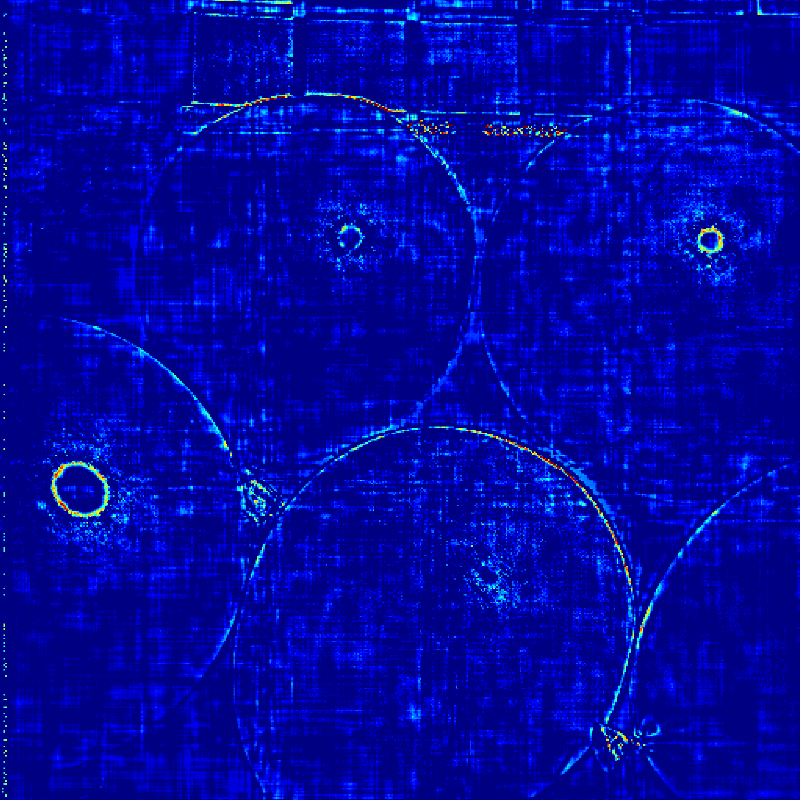}\vspace{1pt}
    \end{minipage}
}
\subfigure[]{
    \begin{minipage}[b]{0.1\linewidth}
    \includegraphics[width=2cm]{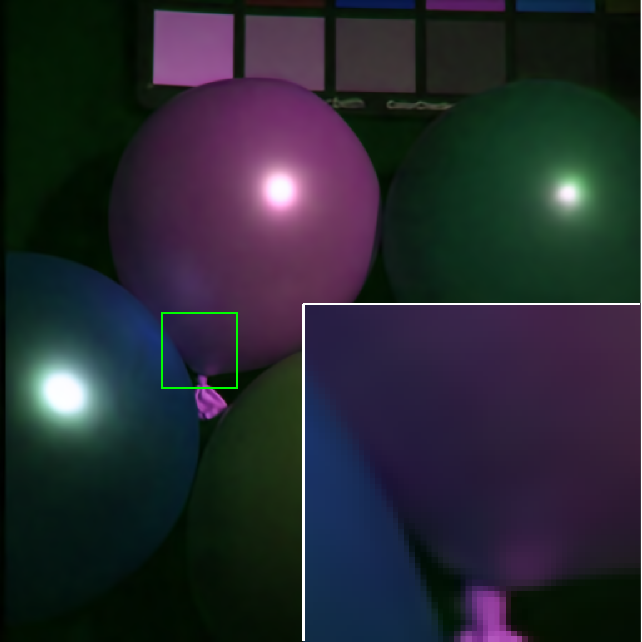}\vspace{1pt}
    \includegraphics[width=2cm]{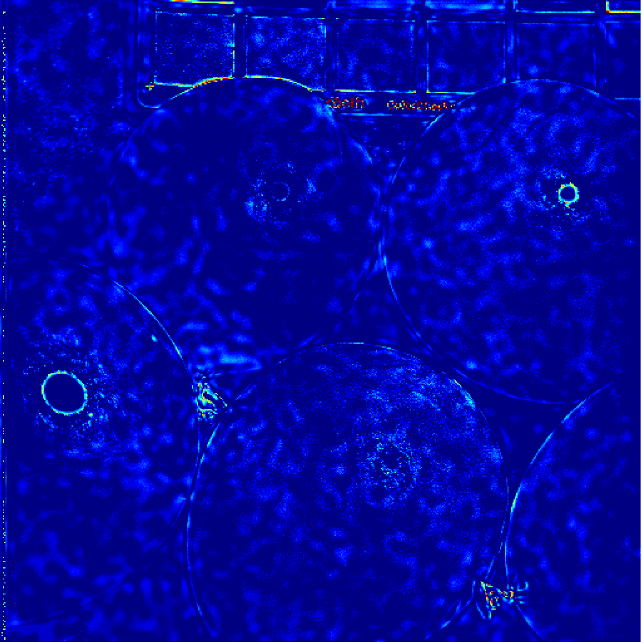}\vspace{1pt}
    \end{minipage}
}
\subfigure[]{
    \begin{minipage}[b]{0.1\linewidth}
    \includegraphics[width=2cm]{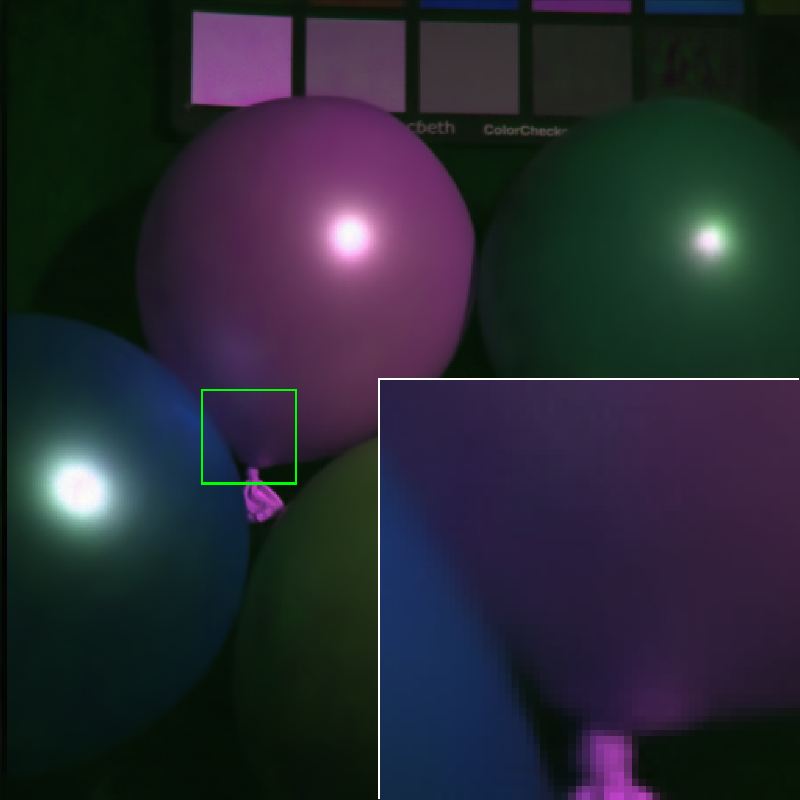}\vspace{1pt}
    \includegraphics[width=2cm]{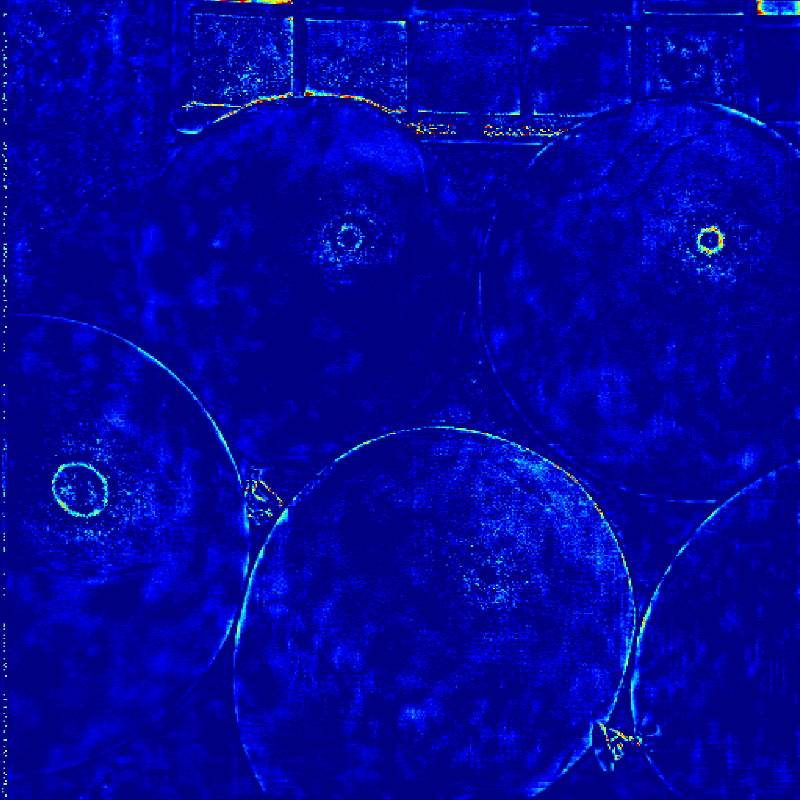}\vspace{1pt}
    \end{minipage}
}
\subfigure[]{
    \begin{minipage}[b]{0.1\linewidth}
    \includegraphics[width=2cm]{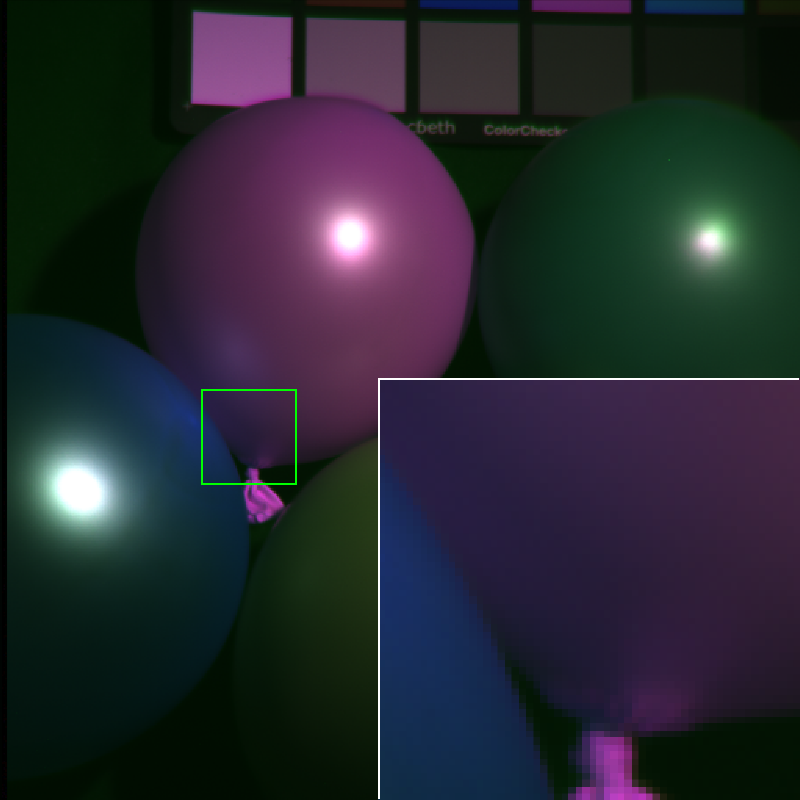}\vspace{1pt}
    \includegraphics[width=2cm]{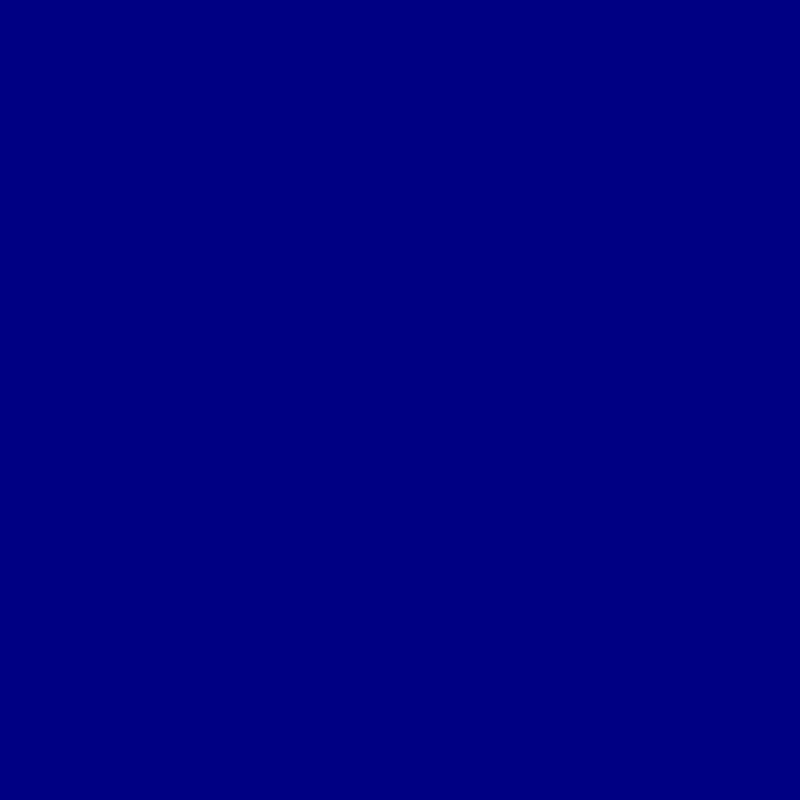}\vspace{1pt}
    \end{minipage}
}\vspace{-0.6em}
   \includegraphics[width=12cm]{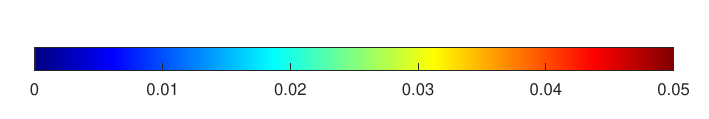}\vspace{1pt}\vspace{-1em}
\caption{The first row shows the false color images of ``Balloons" gained by bands (R: 16, G: 1, and B: 25), while the second row presents the error images of the 8th band. (a) CSTF, (b) UTV, (c) CTRF, (d) FSTRD, (e) LogLRTR, (f) CLoRF, (g) JLRST, (h) ground truth.}
\label{figure5}
\end{figure*}

The pseudo-color and error images of the ``University of Houston" are shown in Fig.~\ref{figure6}.
In the magnified regions of the pseudo-color images, noticeable staircase effects can be observed for CSTF, UTV and CTRF. However, CLoRF exhibits over-smoothing, resulting in the loss of texture details and blurred edges in the fused image. Notably, JLRST yields significantly smaller errors and demonstrates a notable advantage in preserving image details compared with other methods.

\begin{figure*}
\centering
\subfigure[]{
    \begin{minipage}[b]{0.1\linewidth}
    \includegraphics[width=2cm]{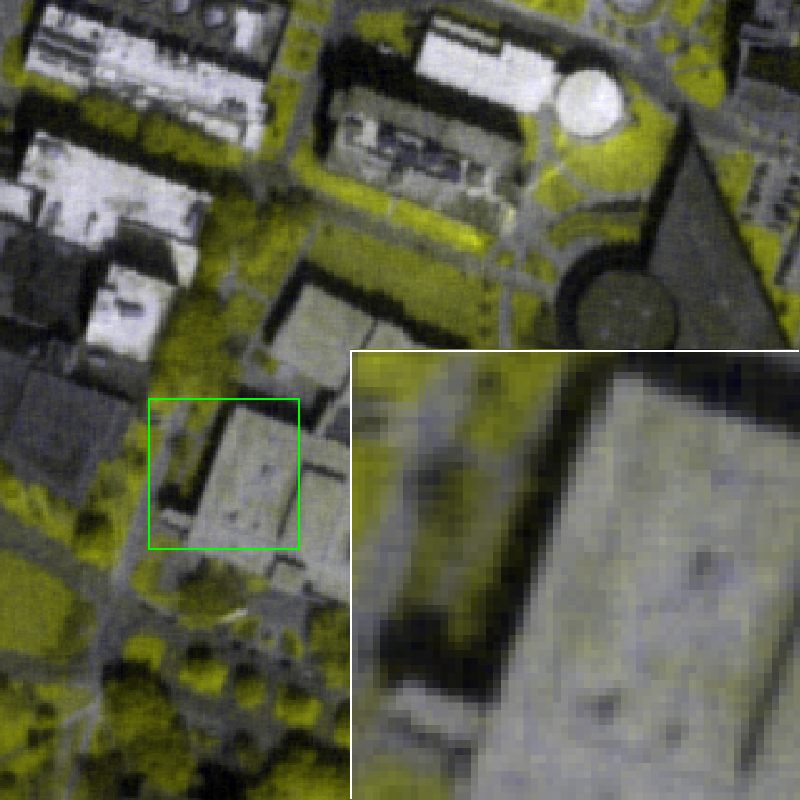}\vspace{1pt}
    \includegraphics[width=2cm]{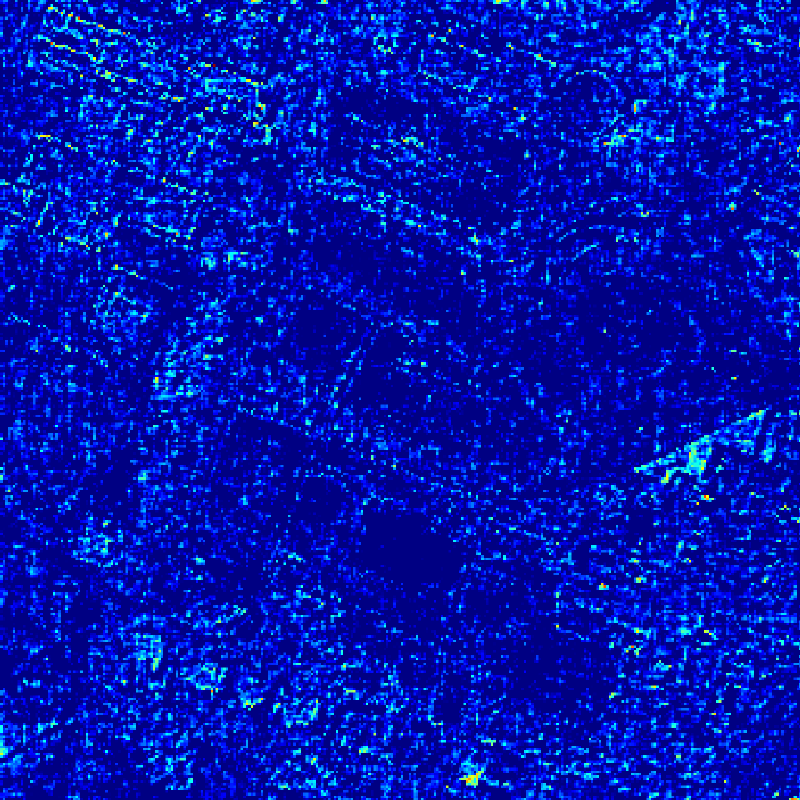}\vspace{1pt}
    \end{minipage}
}
\subfigure[]{
    \begin{minipage}[b]{0.1\linewidth}
    \includegraphics[width=2cm]{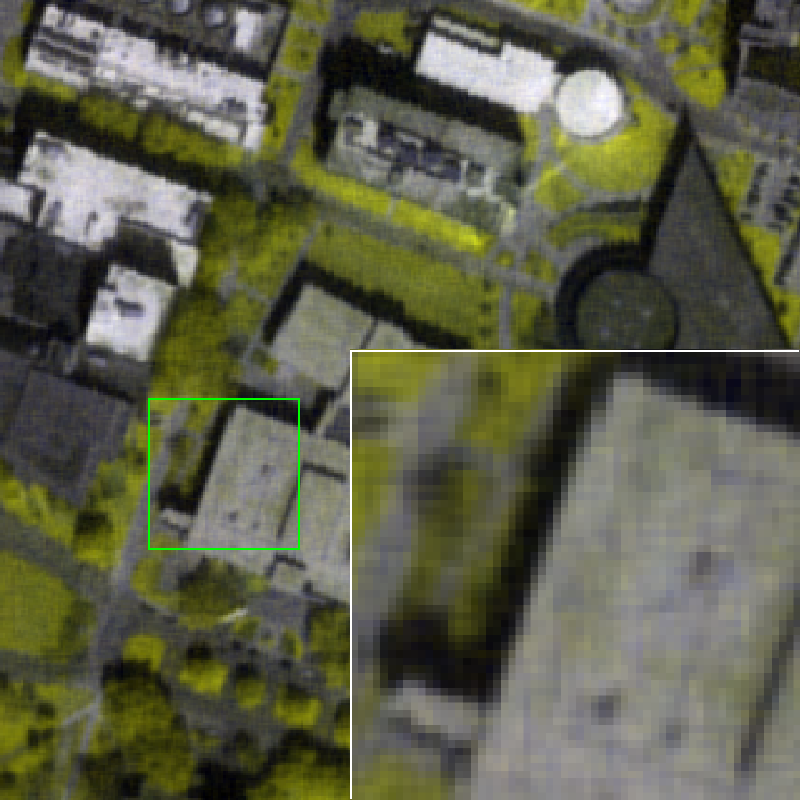}\vspace{1pt}
    \includegraphics[width=2cm]{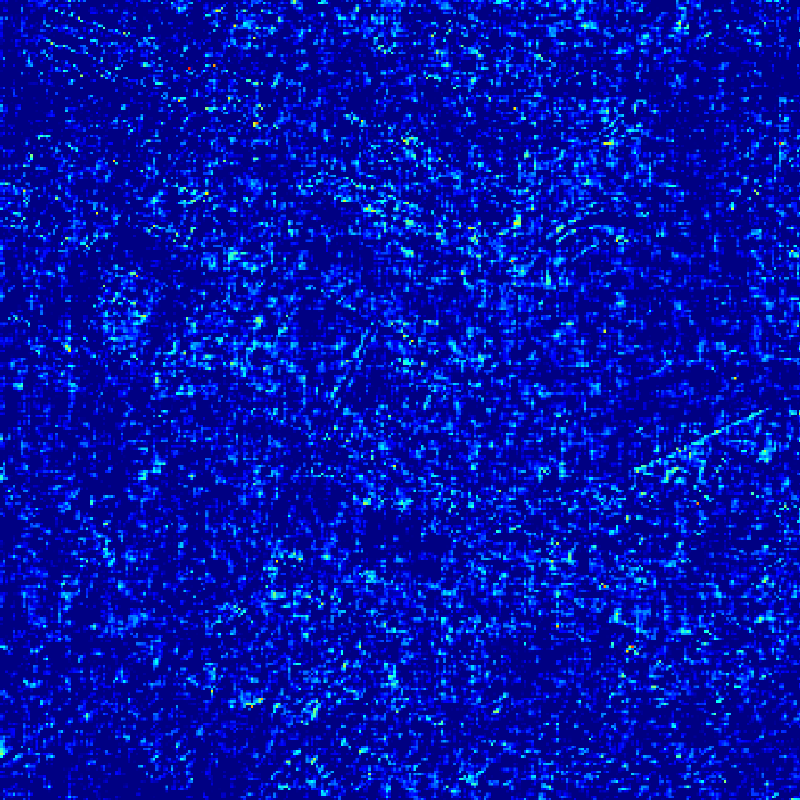}\vspace{1pt}
    \end{minipage}
 }
\subfigure[]{
    \begin{minipage}[b]{0.1\linewidth}
    \includegraphics[width=2cm]{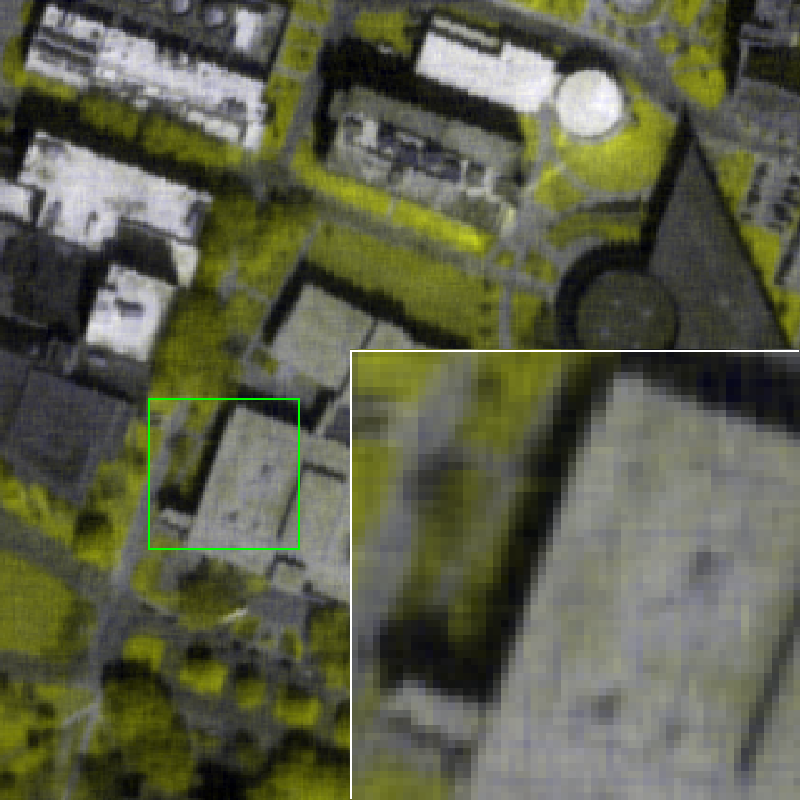}\vspace{1pt}
    \includegraphics[width=2cm]{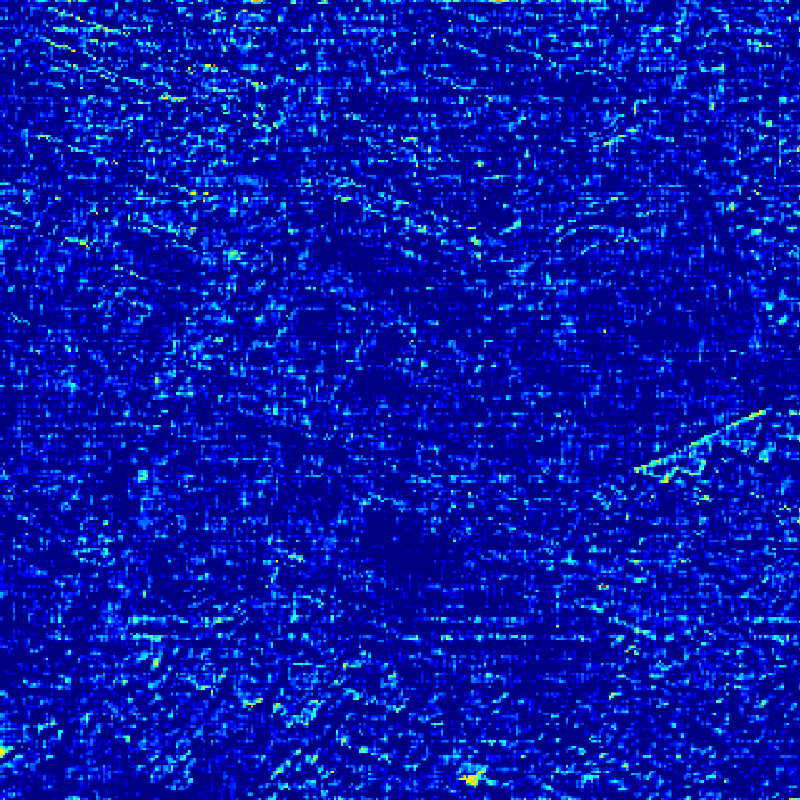}\vspace{1pt}
    \end{minipage}
}
\subfigure[]{
    \begin{minipage}[b]{0.1\linewidth}
    \includegraphics[width=2cm]{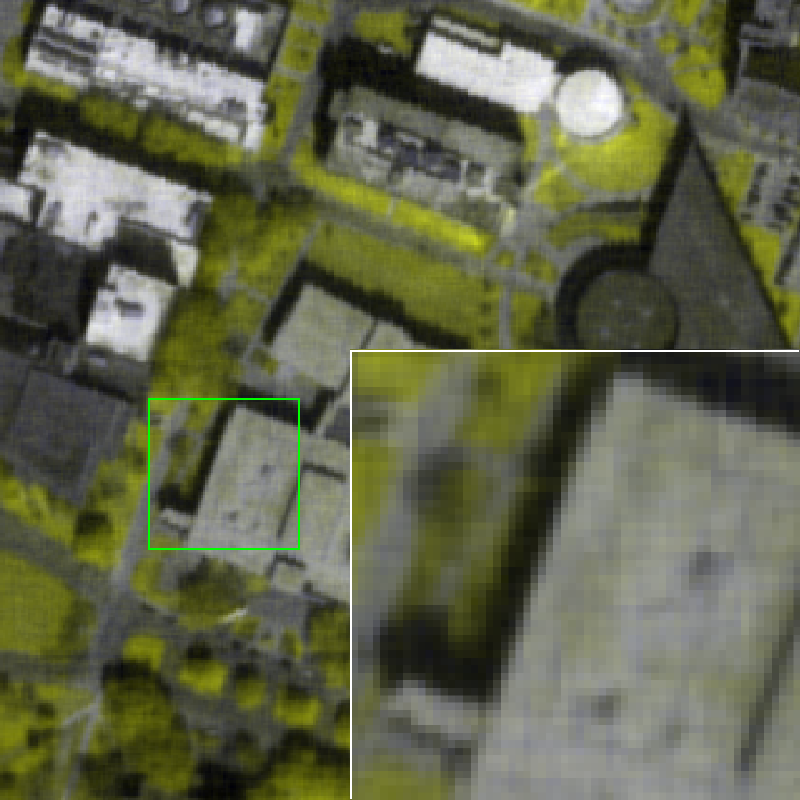}\vspace{1pt}
    \includegraphics[width=2cm]{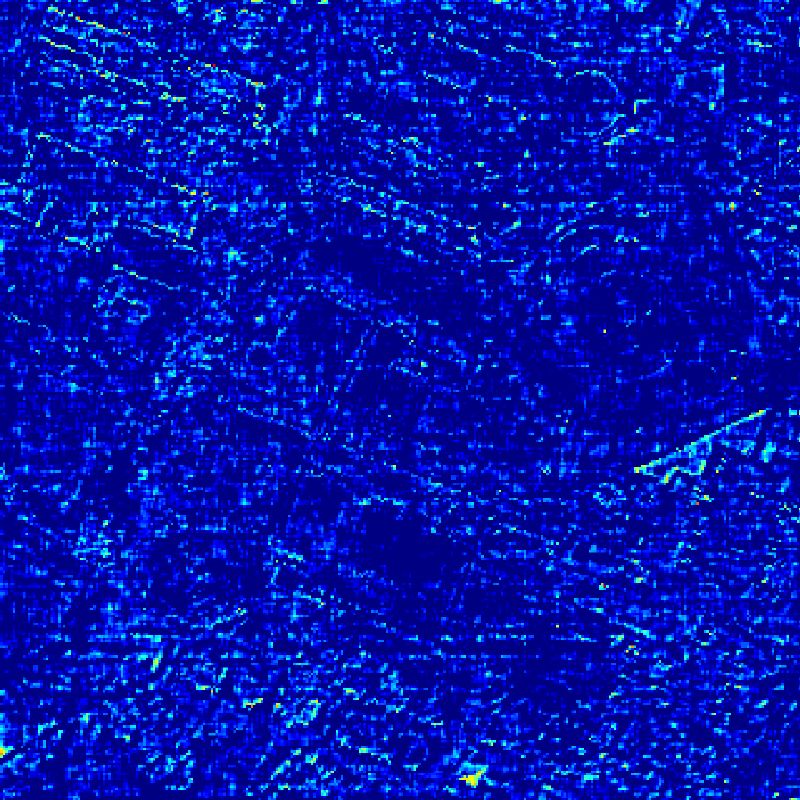}\vspace{1pt}
    \end{minipage}
}
\subfigure[]{
    \begin{minipage}[b]{0.1\linewidth}
    \includegraphics[width=2cm]{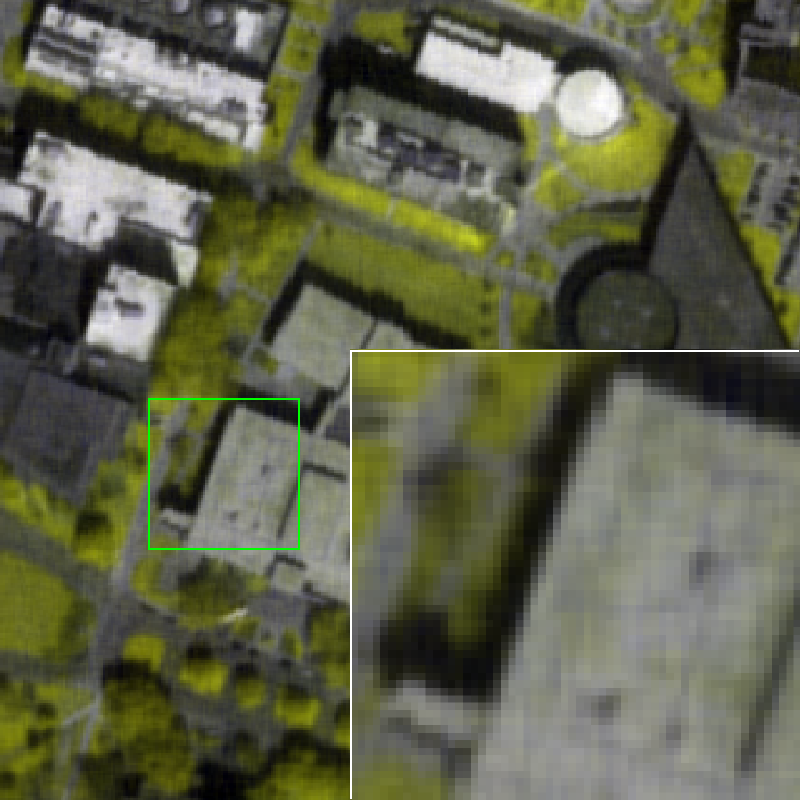}\vspace{1pt}
    \includegraphics[width=2cm]{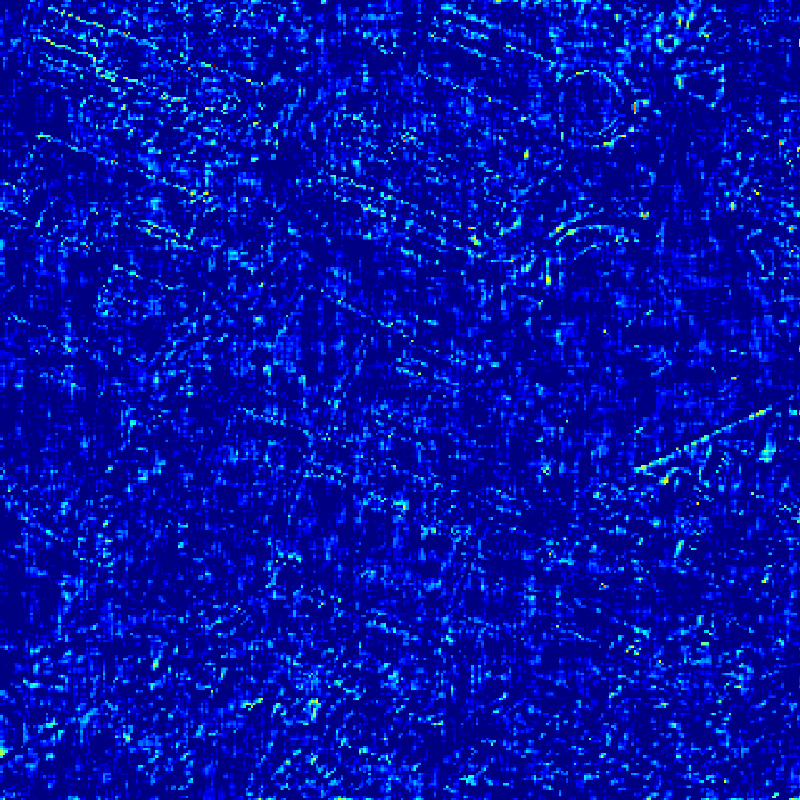}\vspace{1pt}
    \end{minipage}
}
\subfigure[]{
    \begin{minipage}[b]{0.1\linewidth}
    \includegraphics[width=2cm]{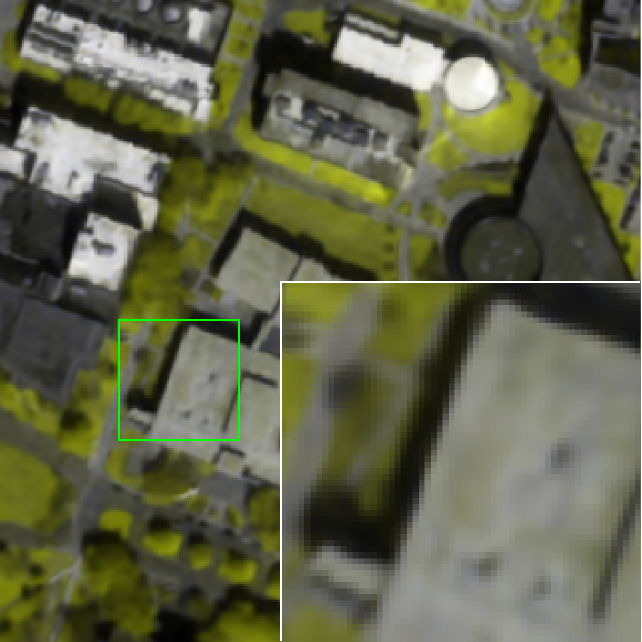}\vspace{1pt}
    \includegraphics[width=2cm]{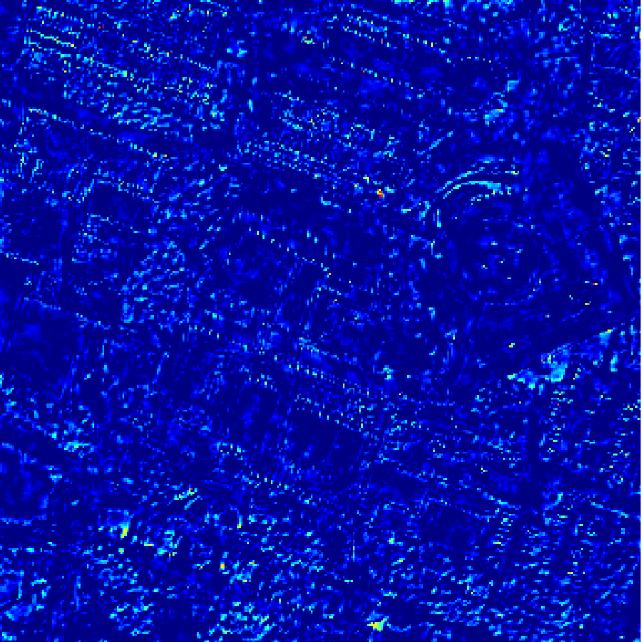}\vspace{1pt}
    \end{minipage}
}
\subfigure[]{
    \begin{minipage}[b]{0.1\linewidth}
    \includegraphics[width=2cm]{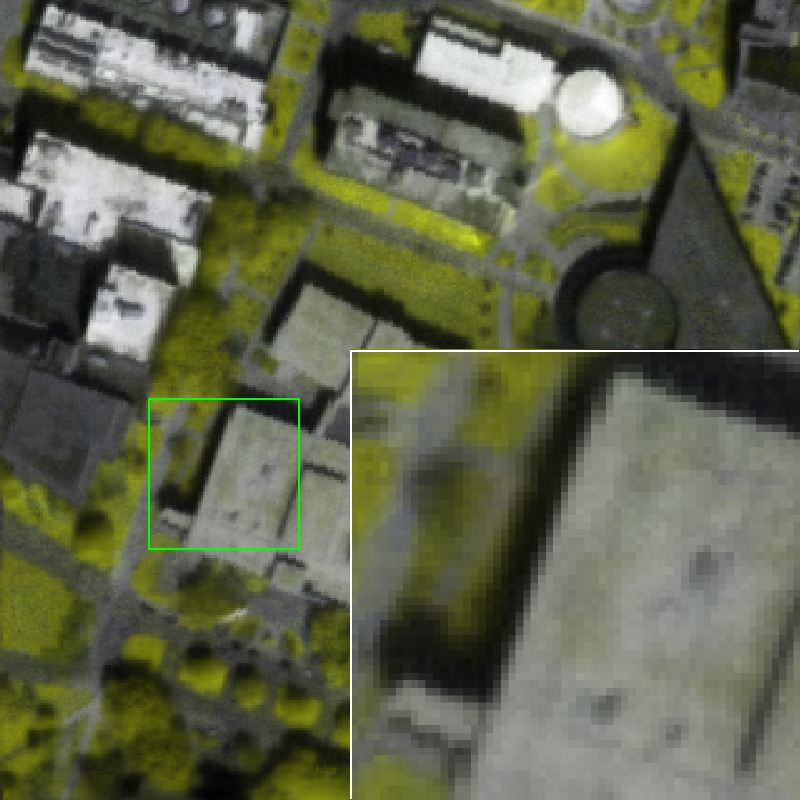}\vspace{1pt}
    \includegraphics[width=2cm]{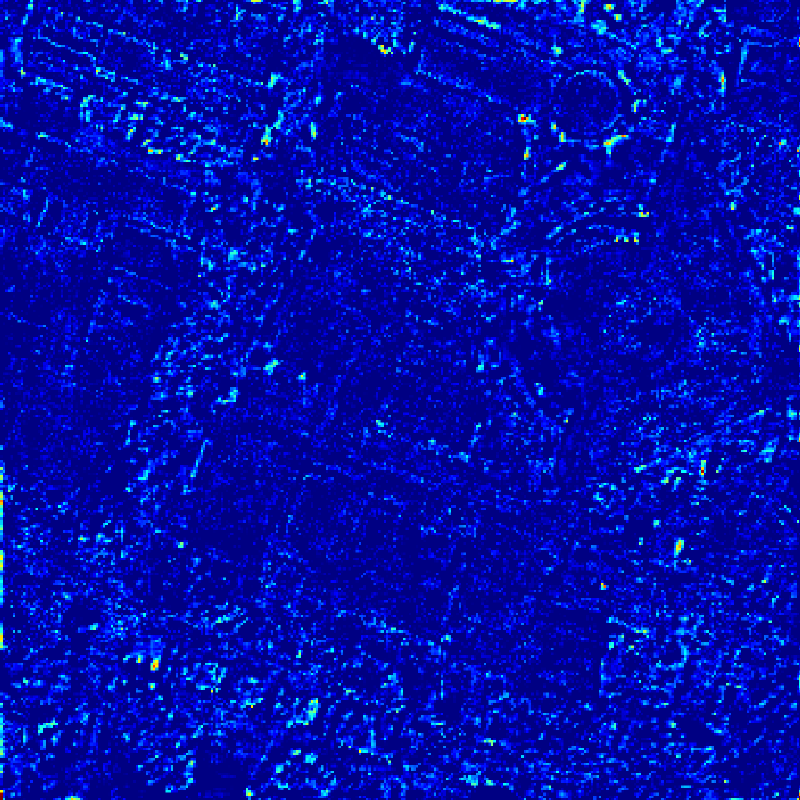}\vspace{1pt}
    \end{minipage}
}
\subfigure[]{
    \begin{minipage}[b]{0.1\linewidth}
    \includegraphics[width=2cm]{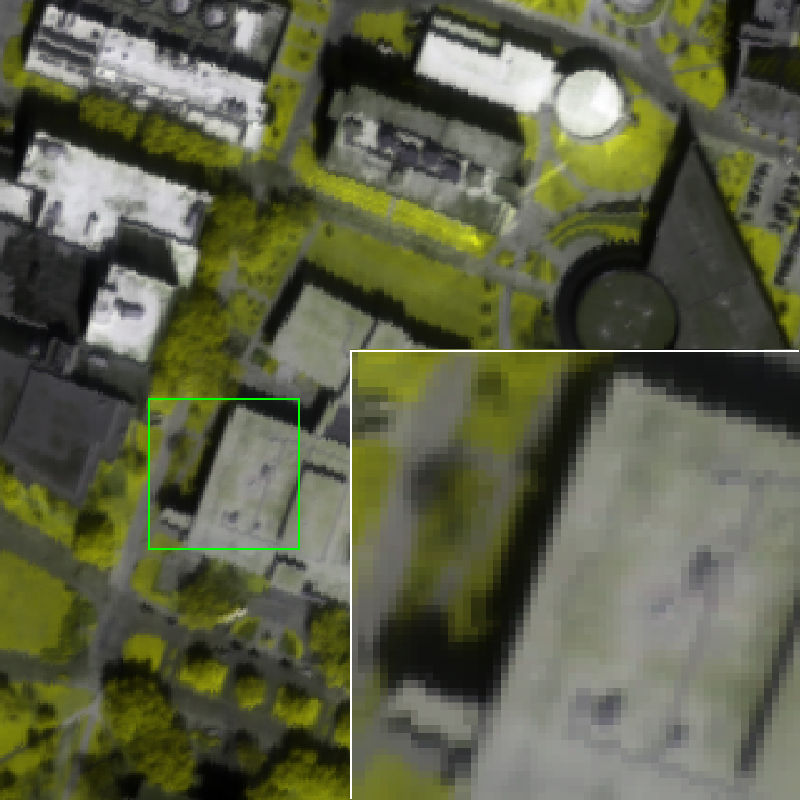}\vspace{1pt}
    \includegraphics[width=2cm]{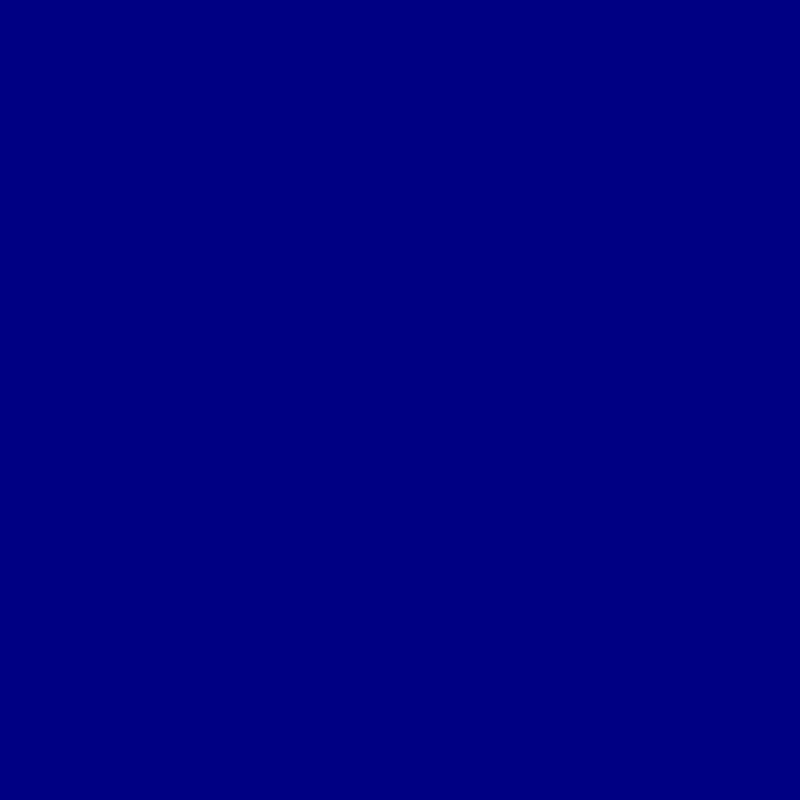}\vspace{1pt}
    \end{minipage}
}\vspace{-0.6em}
   \includegraphics[width=12cm]{Color_bar_0.1.pdf}\vspace{-1em}
\caption{The first row lists the false color images of ``University of Houston" generated by bands (R: 29, G: 32, and B: 21), and the second row displays the error images of the 27th band. (a) CSTF, (b) UTV, (c) CTRF, (d) FSTRD, (e) LogLRTR, (f) CLoRF, (g) JLRST, (h) ground truth.}
\label{figure6}
\end{figure*}

\subsection{Ablation Experiments}
To assess the individual contributions of the regularization terms in JLRST, we design comprehensive ablation experiments. Specifically, we perform three independent experiments and sequentially set $\alpha_1=0$, $\alpha_2=0$, and $\alpha_3=0$. Table~\ref{table3} presents the quantitative evaluation results for both the ``Balloons" and ``University of Houston" images under these conditions. The experimental results clearly demonstrate that the first and second regularization terms exert significant influence on model performance.

\begin{table*}[htbp]\vspace{-1em}
\caption{Comparison of quantitative indices from ablation experiments of regularization terms.}
\centering
\resizebox{\textwidth}{!}{
\newcommand{\rb}[1]{\raisebox{1.0ex}[0pt]{#1}}
\begin{tabular}{llccccccccccc}
\hline
{\rule[-1mm]{0mm}{3.5mm}}&\hbox{Image}&\multicolumn{5}{c}{Balloons}&\multicolumn{5}{c}{University of Houston}   \\
\cmidrule(r){3-7} \cmidrule(r){8-12}
{\rule[-1mm]{0mm}{3.5mm}}&Method&PSNR&SSIM&ERGAS&SAM&UIQI&PSNR&SSIM&ERGAS&SAM&UIQI\\
\hline
{\rule[-1mm]{0mm}{3.5mm}}&JLRST($\alpha_1=0$) &43.732 &0.984 &1.144 &4.000 &0.901 &38.796 &0.951 &1.525&2.715 &0.979                \\
{\rule[-1mm]{0mm}{3.5mm}}&JLRST($\alpha_2=0$)&43.928 &0.983 &1.121&4.294 &0.895&38.911 &0.853 &1.493 &2.630 &0.980        \\
{\rule[-1mm]{0mm}{3.5mm}}&JLRST($\alpha_3=0$)&44.373 &0.985&1.061 &4.311 &0.897 &39.201 &0.960&1.421 &2.562&0.982        \\
{\rule[-1mm]{0mm}{3.5mm}}&JLRST&\textbf{44.778} &\textbf{0.988} &\textbf{1.010} &\textbf{3.639} &\textbf{0.916}&\textbf{39.585} &\textbf{0.961} &\textbf{1.355}&\textbf{2.306} &\textbf{0.984}
\\
\hline
\end{tabular}}\vspace{-0.5em}
\label{table3}
\end{table*}

\subsection{Convergence Analysis}
In this subsection, we analyze the convergence of the proposed approach by performing comprehensive tests on the four images. Fig.~\ref{figure7} shows the curves of relative error and PSNR versus the number of iterations. As shown in Fig.~\ref{figure7}(a), the relative error decreases rapidly with the increase of iterations and reveals a consistent trend of converging toward zero.
Furthermore, Fig.~\ref{figure7}(b) demonstrates that the PSNR values exhibit rapid improvement with increasing iterations, ultimately reaching a stable state. These pieces of evidence further confirm the convergence of our proposed algorithm.

\begin{figure}[h!]
\centering\vspace{-1em}
\subfigure[]{
\includegraphics[width=3in]{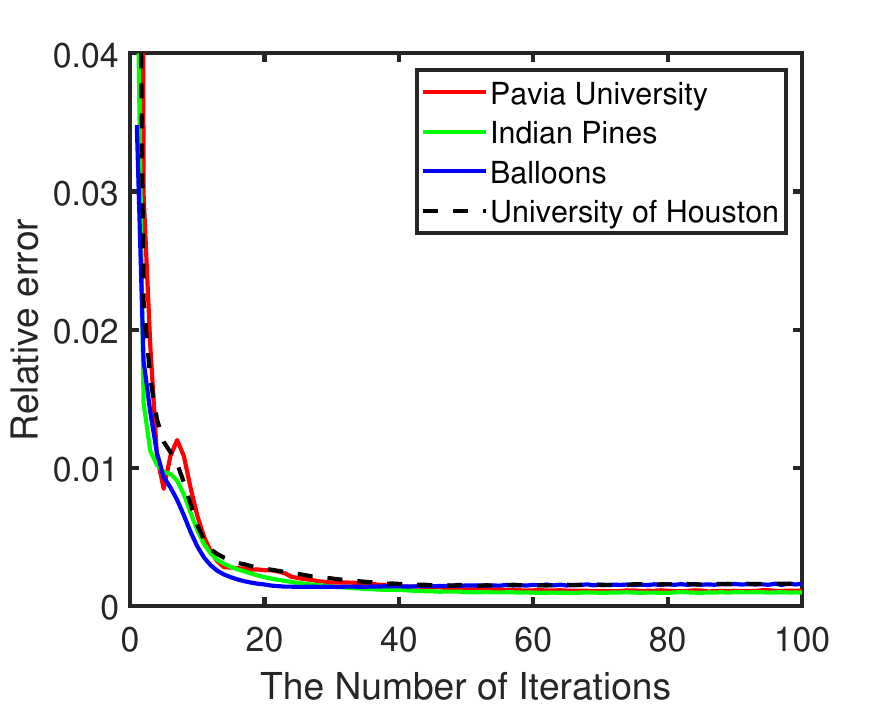}}
\subfigure[]{
\includegraphics[width=3in]{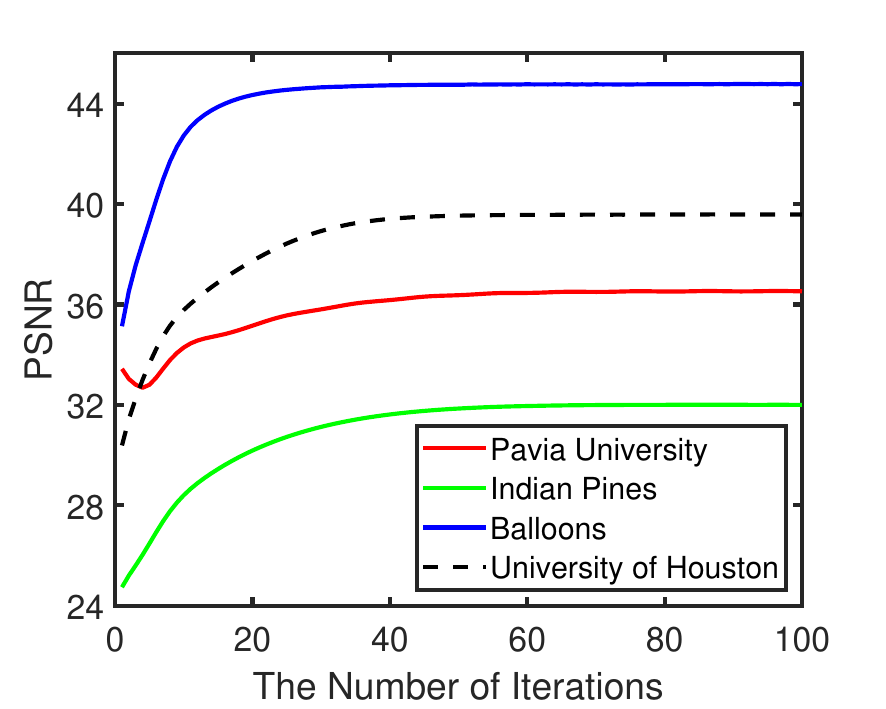}}
\caption{Relative error $\left\|\mathcal{Z}^{k}-\mathcal{Z}^{k-1}\right\|_{F}/\left\|\mathcal{Z}^{k}\right\|_{F}$ (left) and PSNR (right)
trends over iterations for four images.}
\label{figure7}
\end{figure}

\subsection{Running Time}
To evaluate the computational efficiency, Table~\ref{table4} summarizes the running time (in seconds) required by different testing approaches across the four datasets. These methods, namely CSTF, UTV, CTRF, FSTRD, LogLRTR and JLRST, were implemented on a desktop computer with a 3.20 GHz 12th Gen Intel(R) Core(TM) i9-12900K CPU and 64.0 GB RAM via MATLAB (R2023b), while CLoRF was evaluated on an RTX 5090D GPU with 64 GB RAM.
The deep learning method CLoRF relies on GPU acceleration, making its running time not directly comparable with CPU-based approaches. Therefore, it is excluded from this quantitative comparison. Among the CPU-based methods, CSTF consistently demonstrates the fastest computational speed, yet its fusion performance remains inferior to other approaches. While both FSTRD and LogLRTR yield stable fusion results, they come with significantly higher computational cost. In contrast, our proposed method not only achieves optimal fusion performance but also ensures computational efficiency.

\begin{table}[htbp]\vspace{-1em}
\caption{Running time for all testing methods.}
\centering
\scalebox{0.8}{\large{
\begin{tabular}{lcccc}
\hline
{\rule[-1mm]{0mm}{3.5mm}}Image &Pavia University  &Indian Pines &Balloons& University of Houston    \\
\hline
{\rule[-1mm]{0mm}{3.5mm}}CSTF  &70&\textbf{7}&\textbf{24}&25 \\
{\rule[-1mm]{0mm}{3.5mm}}UTV  &103&17&41&\textbf{22} \\
{\rule[-1mm]{0mm}{3.5mm}}CTRF &\textbf{56}&40&80&52\\
{\rule[-1mm]{0mm}{3.5mm}}FSTRD  &519&372 &778&515 \\
{\rule[-1mm]{0mm}{3.5mm}}LogLRTR    &491&252 &693&460  \\
{\rule[-1mm]{0mm}{3.5mm}}JLRST  &161&58&226&98 \\
\hline
\end{tabular}}}\vspace{-1em}
\label{table4}
\end{table}

\section{Conclusion}\label{section5}
This paper presented a novel subspace-based fusion approach via joint low-rank and smooth tensor regularization.
Our method effectively exploited spectral correlations by employing SVD to extract the spectral subspace from the observed LR-HSI. Then, building upon the learned clustering structure from the HR-MSI, clustering of similar patches in the coefficient tensor was established. We subsequently imposed a JLRST regularization on the collected three-order tensors to simultaneously exploit both global low-rankness and local smoothness priors inherent in the data. Moreover, we developed an efficient ADMM to solve the proposed model and analyzed its convergence theoretically. Extensive experiments conducted on four datasets show that the proposed method achieved superior fusion performance over six competing approaches.

\section*{Data Availability}
 The data used to support the findings of this study are
 available from the corresponding author upon request.

\section*{Competing Interests}
The authors declare that they have no competing interests.

\section*{Acknowledgments}
This work was partly supported by the National Natural Science Foundation of China (62461043, 12201286), the Jiangxi Provincial Natural Science Foundation (20242BAB22013, 20252BAC240172, 20252BAC250011), the Shenzhen Science and Technology Program (20231115165836001), Guangdong Basic and Applied Research Foundation (2024A1515012347).

\setcounter{equation}{0}

\end{document}